\documentclass[reqno,twoside,11pt]{article}
\usepackage[latin1]{inputenc}
\usepackage[T1]{fontenc}
\usepackage[english]{babel}
\usepackage{color}
\usepackage{tocloft}
\usepackage{verbatim}

\parskip=3pt

\setlength{\parindent}{0cm}

\usepackage[
  hmarginratio={1:1},     
  vmarginratio={1:1},     
  textwidth=17cm,        
  textheight=24cm,
  heightrounded          
]{geometry}

\usepackage[noindentafter]{titlesec}
\titlespacing{\paragraph}{%
  0pt}{
0.25\baselineskip}{
1em}

\titlespacing{\section}{%
0pt}{
0.2cm}{
0em}

\titlespacing{\subsection}{%
0pt}{
0.2cm}{
0em}

\titlespacing{\subsubsection}{%
0pt}{
0cm}{
0em}

\relpenalty=9999
\binoppenalty=9999

\usepackage{amsmath,amsthm,amssymb,accents}
\usepackage{mathrsfs, mathtools,xpatch}
\usepackage{dsfont}

\DeclareMathAlphabet{\mathbbe}{U}{bbold}{m}{n}

\mathtoolsset{showonlyrefs}

\allowdisplaybreaks

\usepackage[all]{xy} 
\input xy 
\xyoption{all}
\xyoption{2cell} 
\xyoption{v2}
\SelectTips{cm}{11} 

\usepackage{tikz}
\usetikzlibrary{snakes}
\usetikzlibrary{arrows}
\usepackage{tikz-cd}

\tikzset{string/.style={decorate, decoration={snake, segment length=3pt, amplitude=1pt}}}

\usepackage[hidelinks]{hyperref}

\usepackage[numbers,sort&compress]{natbib}
\makeatletter
\renewcommand{\@biblabel}[1]{[#1]\hfill}
\makeatother
\setlength{\bibsep}{0.1cm}

\usepackage[mathscr]{euscript}
\DeclareMathAlphabet{\pazocal}{OMS}{zplm}{m}{n}

\usepackage[shortlabels]{enumitem}


%

\newtheoremstyle{thm}                                                           
{0.15cm}                                         
{0.15cm}                                         
{\itshape}      
{}                                      
{\bfseries}                             
{}                                      
{0.2cm}                                         
{\thmname{#1}~\thmnumber{#2}\thmnote{ (#3)}}%

\setlength{\topsep}{2pt}
\setlength{\partopsep}{0pt plus 0pt minus 0pt}

\makeatletter
\xpatchcmd{\proof}{\topsep6\p@\@plus6\p@\relax}{}{}{}
\makeatother

\newtheoremstyle{rmk}                                                           
{0.15cm}                                         
{0.15cm}                                         
{}      
{}                                      
{\bfseries}                             
{}                                      
{0.2cm}                                         
{}                                      

\theoremstyle{thm}
\newtheorem{theorem}[equation]{Theorem}

\newtheorem{corollary}[equation]{Corollary}

\newtheorem{lemma}[equation]{Lemma}
\newtheorem{proposition}[equation]{Proposition}
\newtheorem{definition}[equation]{Definition}

\theoremstyle{rmk}
\newtheorem{example}[equation]{Example}
\newtheorem{remark}[equation]{Remark}

\numberwithin{equation}{section}



\setlength{\cftbeforesecskip}{0.05cm}


\makeatletter
\newlength{\@thlabel@width}%
\newcommand{\thmenumhspace}{\settowidth{\@thlabel@width}{(1)}\sbox{\@labels}{\unhbox\@labels\hspace{\dimexpr-\leftmargin+\labelsep+\@thlabel@width-\itemindent}}}
\makeatother

\usepackage[todo]{optional}


\newcommand{\inv}{{\mathrm{inv}}}
\newcommand{\pr}{\mathrm{pr}}

\newcommand{\Sing}{\operatorname{Sing}}

\newcommand{\Spin}{{\mathrm{Spin}}}
\newcommand{\String}{{\mathrm{String}}}

\newcommand{\Dec}{\operatorname{Dec}}
\newcommand{\obj}{{\operatorname{obj}}}

\newcommand{\rmH}{\mathrm{H}}
\newcommand{\rmU}{{\mathrm{U}}}

\newcommand{\rmB}{{\mathrm{B}}}

\newcommand{\Sym}{{\mathrm{Sym}}}

\newcommand{\ev}{\operatorname{ev}}

\newcommand{\rmS}{\mathrm{S}}

\newcommand{\opp}{\mathrm{op}}

\newcommand{\Ran}{\mathrm{Ran}}

\newcommand{\colim}{{\mathrm{colim}}}

\newcommand{\rmL}{{\mathrm{L}}}
\newcommand{\rmR}{{\mathrm{R}}}
\newcommand{\rmF}{{\mathrm{F}}}
\newcommand{\Loc}{\operatorname{Loc}}

\renewcommand{\lim}{{\mathrm{lim}}}


\newcommand{\scI}{\mathscr{I}}

\newcommand{\scH}{\mathscr{H}}

\newcommand{\scP}{\mathscr{P}}
\newcommand{\scC}{\mathscr{C}}
\newcommand{\scD}{\mathscr{D}}

\newcommand{\scU}{{\mathscr{U}}}
\newcommand{\scV}{{\mathscr{V}}}

\newcommand{\scM}{{\mathscr{M}}}


\newcommand{\CG}{\mathcal{G}}

\newcommand{\CA}{\mathcal{A}}


\newcommand{\bH}{{\mathbf{H}}}

\newcommand{\bS}{{\mathbf{S}}}


\newcommand{\sfL}{{\mathsf{L}}}

\newcommand{\sfU}{\mathsf{U}}

\newcommand{\sfO}{\mathsf{O}}

\newcommand{\sfc}{\mathsf{c}}
\newcommand{\sfS}{\mathsf{S}}


\newcommand{\NN}{\mathbb{N}}
\newcommand{\RN}{\mathbb{R}}
\newcommand{\ZN}{\mathbb{Z}}

\newcommand{\bbE}{\mathbb{E}}


\newcommand{\Des}{\mathfrak{Des}}


\newcommand{\ul}[1]{\underline{#1}}

\newcommand{\cC}{{\check{C}}}

\newcommand{\HLB}{{\mathrm{HLB}}}
\newcommand{\Ab}{{\mathscr{A}\mathrm{b}}}
\newcommand{\Grb}{{\mathscr{G}\mathrm{rb}}}

\newcommand{\Set}{{\mathscr{S}\mathrm{et}}}
\newcommand{\sSet}{{\mathscr{S}\mathrm{et}_{\hspace{-.03cm}\Delta}}}
\newcommand{\Fun}{{\mathscr{F}\hspace{-.03cm}\mathrm{un}}}

\newcommand{\Cat}{{\mathscr{C}\mathrm{at}}}

\newcommand{\Mfd}{{\mathscr{M}\mathrm{fd}}}

\newcommand{\Cart}{{\mathscr{C}\mathrm{art}}}

\newcommand{\Gpd}{{\mathscr{G}\mathrm{pd}}}
\newcommand{\Grp}{{\mathscr{G}\hspace{-0.02cm}\mathrm{rp}}}
\newcommand{\EEpi}{{\mathrm{EEpi}}}

\newcommand{\Op}{{\mathscr{O}\mathrm{p}}}
\newcommand{\Bun}{{\mathscr{B}\hspace{-0.02cm}\mathrm{un}}}


\newcommand{\eq}{\overset{\simeq}{\longrightarrow}}
\newcommand{\wc}{{\widetilde{\mathsf{c}}}}

\newenvironment{myenumerate}{\begin{enumerate}[topsep=-\parskip+0.1cm, itemsep=0.1cm, parsep=0cm, leftmargin=*, label=(\arabic*)]}{\end{enumerate}}

\newcommand{\qen}{\hfill$\triangleleft$}

\newcommand{\dslash}{{/\hspace{-0.1cm}/}}

\newcommand{\bbDelta}{{\mathbbe{\Delta}}}

\newcommand{\textint}{\textstyle{\int}}

\setlength{\parskip}{0.1cm}

\sloppy

\newlength{\Displayskip}
\setlength{\Displayskip}{0.2cm}

\begin{document}

\setlength{\abovedisplayskip}{\Displayskip}
\setlength{\belowdisplayskip}{\Displayskip}

\begin{flushright}
\small
\textsf{Hamburger Beiträge zur Mathematik Nr.\,858}\\
{\sf ZMP--HH/20-14} 
\end{flushright}


\begin{center}
\LARGE{\textbf{Principal $\infty$-Bundles and Smooth String Group Models}}
\end{center}
\begin{center}
\large Severin Bunk
\end{center}

\begin{abstract}
\noindent
We provide a general, homotopy-theoretic definition of string group models within an $\infty$-category of smooth spaces and present new smooth models for the string group.
Here, a smooth space is a presheaf of $\infty$-groupoids on the category of cartesian spaces.
The key to our definition and construction of smooth string group models is a version of the singular complex functor, which assigns to a smooth space an underlying ordinary space.
We provide new characterisations of principal $\infty$-bundles and group extensions in $\infty$-topoi, building on work of Nikolaus, Schreiber and Stevenson.
These insights allow us to transfer the definition of string group extensions from the $\infty$-category of spaces to the $\infty$-category of smooth spaces.
Finally, we consider smooth higher-categorical group extensions that arise as obstructions to the existence of equivariant structures on gerbes.
These extensions give rise to new smooth models for the string group, as recently conjectured in joint work with Müller and Szabo.
\end{abstract}

\tableofcontents

\section{Introduction and overview}
\label{sec:Introduction}

The most direct way to define the string group is via the Whitehead tower of $\sfO(n)$,
\begin{equation}
\label{eq:Whitehead tower of O(n)}
	\cdots \longrightarrow \String(n) \longrightarrow \Spin(n) \longrightarrow \sfS\sfO(n) \longrightarrow \sfO(n)\,.
\end{equation}
By this approach, $\String(n)$ is defined as a 3-connected topological space with a continuous map $\String(n) \to \Spin(n)$ which induces an isomorphism on all homotopy groups except for in degree three.
So far, this defines $\String(n)$ only as a space, but in~\cite{Stolz:Pos_Ric} Stolz constructed $\String(n)$ as a topological group and the map $\String(n) \to \Spin(n)$ as a morphism of topological groups.
In fact, he presented a construction that produces, for any compact, simple, and simply connected Lie group $H$, a morphism $\String(H) \to H$ of topological groups whose underlying continuous map is a three-connected covering.
A covering of this type is also called a \emph{string group extension of $H$}.
In these conventions, we write $\String(n) \coloneqq \String(\Spin(n))$.

The string group is important in geometry and topology in several ways.
Originally, Killingback~\cite{Killingback:WS_Anomalies} and Witten~\cite{Witten:Dirac_on_Loop_Space} investigated the two-dimensional supersymmetric $\sigma$-model on background manifolds $M$ and found that this is well-defined only if the free loop space $LM$ admits a spin structure.
Witten, moreover, computed the index of a hypothetical Dirac operator on $LM$ based on physical arguments, leading to the definition of the Witten genus.
By now, it has been understood that the Witten genus is related to the cohomology theory of topological modular forms (TMF).
The string group enters in this story, for example by defining orientations in TMF~\cite{AHR:TMF_Orientations,DHH:Obstr_to_String_Or}, analogously to how the spin group underlies orientations in real K-theory.

Since the free loop space $LM$ is less tractable than the manifold $M$ itself, it is an important question whether the condition that $LM$ admit a spin structure can be recast as a condition on the manifold $M$ itself.
This is indeed the case:
\emph{spin} structures on $LM$ correspond to \emph{string} structures on $M$~\cite{ST:Spinors_on_Loops,ST:What_is_an_elliptic_object,Waldorf:String_v_Spin}.
Topologically, a string structure on $M$ is a lift of the classifying map $M \to \rmB \sfO(n)$ of the tangent bundle $TM \to M$ to a map $M \to \rmB \String(n)$.
That is, a string structure is a reduction of the structure group of $TM$ to $\String(n)$.
From a geometric perspective, the interest ultimately is in identifying consequences and constructions that are facilitated by a string structure on a manifold.
Concrete examples include the Höhn-Stolz conjecture~\cite{Hoehn:Dpl_Thesis,Stolz:Pos_Ric} that the Witten genus is trivial for any Riemannian $4k$-manifold with positive Ricci curvature which admits a string structure, or the long-standing goal to define a Dirac operator on the loop space $LM$.

In order to study the differential geometric, rather than topological, implications of string structures, it is paramount to have models for $\String(n)$ not just as a topological group, but as a group object in some geometric category.
For instance, given a Riemannian manifold $M$, the construction of the Dirac operator associated with a spin structure on $M$ depends on the ability to glue the tangent bundle $TM$ from \emph{smooth} $\Spin(n)$-valued functions.
Technically, one also needs to find local frames for $TM$ in which the Levi-Civita connection of $M$ is represented by 1-forms valued in the Lie algebra $\mathfrak{spin}(n)$ rather than $\mathfrak{so}(n)$; however, since the fibre of the map $\Spin(n) \to \sfS\sfO(n)$ is discrete, these Lie algebras happen to be canonically isomorphic (for more background on spin geometry and Dirac operators, see, for instance,~\cite{LM:Spin_Geometry}).
Analogously to how spin structures on $LM$ stem from string structures on $M$, a hypothetical Dirac operator on $LM$ may well stem from a geometric operator on $M$ itself (e.g.~via some transgression procedure), obtained from a further lift of the Levi-Civita connection to the Lie algebra $\mathfrak{string}(n)$.
However, for this to make sense, one must work with a \emph{smooth}, rather than topological, model for $\String(n)$.

Classical results on cohomology readily imply that it is impossible to construct $\String(H)$ as a finite-dimensional Lie group (for any compact, simple, simply connected Lie group $H$).
Thus, to find geometric models for $\String(H)$, one needs to look beyond the category of smooth, finite-dimensional manifolds.
Indeed, a number of models for $\String(H)$ have been found in (higher) categories of smooth spaces that generalise the notion of a manifold in various ways~\cite{BCSS:String_via_Loops,Henriques:Integrating_L_oo,SP:String,Waldorf:String_via_TR,NSW:Smooth_String_Model,FRS:U1-Gerbe_connections}.

In each of these constructions, an extension
\begin{equation}
	A \longrightarrow \String(H) \longrightarrow H
\end{equation}
of a compact, simple, simply connected Lie group $H$ is constructed within the chosen ambient category of smooth spaces.
It is then argued that on the underlying ordinary spaces (meaning topological spaces or simplicial sets) one obtains a string group extension in the sense of~\eqref{eq:Whitehead tower of O(n)}.
However, so far there is no general definition of $\String(H)$ in a smooth context that formalises this procedure.
Consequently, in geometric models for $\String(H)$ the extending group $A$ currently has to be chosen ad hoc as an explicit delooping of the Lie group $\sfU(1)$ in a rather strict sense.
This obscures the homotopy-theoretic nature of $\String(H)$, since from a homotopical point of view, not $A$ is fixed, but only its homotopy type.

In~\cite{BMS:Sm2Grp}, studying symmetries of gerbes, we came across extensions of Lie groups $H$ not by a delooping of the Lie group $\sfU(1)$, but by the delooping of the diffeological group $\sfU(1)^H$ of smooth maps from $H$ to $\sfU(1)$.
However, if $H$ is simply connected, then the smooth group $\sfU(1)^H$ is homotopy equivalent to $\sfU(1)$.
Therefore, extensions of $H$ by the delooping $\rmB (\sfU(1)^H)$ potentially have the correct homotopy type to produce smooth string group extensions of $H$.
Nevertheless, we could not make this rigorous due to the lack of a homotopy-theoretic notion of smooth string group extensions that does not fix the extending group, but only its homotopy type.

Here, we provide such a general definition of smooth string group extensions, and we prove that the string group models proposed in~\cite{BMS:Sm2Grp} fit within this definition.
Let $\Mfd$ denote the category of manifolds and smooth maps, and let $\Cart \subset \Mfd$ be the full subcategory on those manifolds that are diffeomorphic to $\RN^n$ for any $n \in \NN_0$.
We denote the $\infty$-category of spaces by $\bS$.
As our ambient \textit{$\infty$-category of smooth spaces}, we choose the $\infty$-category $\bH_\infty \coloneqq \Fun(\Cart^\opp, \bS)$ of presheaves of spaces on $\Cart$.
This provides a very general notion of smooth space: for instance, $\bH_\infty$ contains the categories of manifolds, diffeological spaces, and Lie groupoids.
We write $\ul{M}$ for the image of a manifold $M$ under the fully faithful inclusion $\Mfd \hookrightarrow \bH_\infty$.

The $\infty$-category $\bH_\infty$ is even an $\infty$-topos.
There exists an established theory of group objects in $\infty$-topoi~\cite{Lurie:HTT}.
Moreover, there exists a notion of principal $\infty$-bundles and extensions of group objects in $\infty$-topoi, due to~\cite{NSS:oo-bundles}.
A large part of this paper is devoted to developing this theory further.
In particular, we show that group actions in $\infty$-topoi automatically form groupoid objects (Theorem~\ref{st:grp actions are gpd obs}) and that principal $\infty$-bundles essentially consists of an effective epimorphism and a principal group action (Theorem~\ref{st:pfbun characterisation}); this is analogous to the definition of principal bundles of topological spaces as a locally trivial map and a principal group action.
A group object in an $\infty$-topos $\bH$ is a simplicial object $A \in \Fun(\bbDelta^\opp, \bH)$ satisfying certain properties (see Definitions~\ref{def:Gpd objs} and~\ref{def:Grp(H)}).
We provide the following characterisation of extensions of group objects:

\begin{theorem}
\label{st:intro oo-grp ext via pfbuns}
Let $\bH$ be an $\infty$-topos.
Let \smash{$A \xrightarrow{\iota} G \xrightarrow{p} H$} be a sequence of morphisms of group objects in $\bH$.
The following are equivalent:
\begin{myenumerate}
\item \smash{$A \xrightarrow{\iota} G \xrightarrow{p} H$} is an extension of group objects in $\bH$ in the sense of~\cite{NSS:oo-bundles}, i.e.~the sequence $\rmB A \to \rmB G \to \rmB H$ is a fibre sequence in $\bH$.

\item \smash{$A \xrightarrow{\iota} G \xrightarrow{p} H$} is a fibre sequence of group objects in $\bH$ and $p_1 \colon G_1 \to H_1$ is an effective epimorphism (the subscript $1$ denotes evaluation at $[1] \in \bbDelta$).

\item $A_1 \xrightarrow{\iota_1} G_1 \xrightarrow{p_1} H_1$ is a fibre sequence in $\bH$ and $p_1 \colon G_1 \to H_1$ is an effective epimorphism.

\item The morphism $p_1 \colon G_1 \to H_1$ together with the action of $A$ on $G_1$ induced by $\iota$ define a principal $A$-bundle over $H_1$.
\end{myenumerate}
\end{theorem}

In order to give a general homotopy-theoretic definition of string group extensions within $\bH_\infty$, we need to associate an underlying space to an object in $\bH_\infty$.
In~\cite{Bun:Sm_Spaces} we investigated (a model categorical presentation of) a functor $\rmS_e \colon \bH_\infty \to \bS$ from $\bH_\infty$ to the $\infty$-category $\bS$ of spaces.
It evaluates a smooth space $B \in \bH_\infty$ on the extended affine simplices $\Delta_e^k \in \Cart$ and then takes the geometric realisation of the resulting simplicial object in $\bS$.
One can think of $\rmS_e$ is a version of the singular complex functor for smooth spaces.
Here, we give further interpretation and context to this functor.
Consider the adjunction $\wc \dashv \Gamma$, where $\Gamma \colon \bH_\infty \to \bS$ is the global-section functor and $\wc$ is the constant-presheaf functor.
This fits into a triple adjunction $\Pi \dashv \wc \dashv \Gamma \dashv codisc$, where $codisc$ is fully faithful and where $\Pi$ preserves finite products.
That is, the $\infty$-topos $\bH_\infty$ is cohesive.

\begin{theorem}
\label{st:intro S_e and cohesion}
The functor $\rmS_e \colon \bH_\infty \to \bS$ is part of the cohesion of $\bH_\infty$:
there is a canonical equivalence
\begin{equation}
	\Pi \simeq \rmS_e\,.
\end{equation}
\end{theorem}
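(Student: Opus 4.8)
The plan is to realise both $\Pi$ and $\rmS_e$ as colimit-preserving functors $\bH_\infty \to \bS$ whose restrictions along the Yoneda embedding $y \colon \Cart \to \bH_\infty$ are the constant functor at the point, and then to invoke the universal property of the presheaf $\infty$-category to conclude that they agree. Recall first that, as part of the cohesion already established, $\Pi$ is the left adjoint of the constant-presheaf functor $\wc = p^*$ (for $p \colon \Cart \to \ast$), and that this left adjoint is computed as the colimit over the site, $\Pi \simeq \colim_{\Cart^\opp}$. Since $\Pi$ is thus a left Kan extension and such extensions commute with the Yoneda embedding, one has $\Pi(\ul c) \simeq \ast$ for every $c \in \Cart$, naturally in $c$; equivalently, $\Pi$ is the essentially unique colimit-preserving functor whose restriction along $y$ is the terminal functor $\mathrm{const}_\ast \colon \Cart \to \bS$.

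I would then check that $\rmS_e$ preserves colimits. Writing $\rmS_e(X) \simeq \colim_{\Delta^\opp} X(\Delta_e^\bullet)$, the assignment $X \mapsto X(\Delta_e^\bullet)$ is a levelwise evaluation and hence preserves colimits, because colimits in $\bH_\infty$ are computed objectwise; geometric realisation $\colim_{\Delta^\opp}$ preserves colimits as well, so $\rmS_e$ is a composite of colimit-preserving functors. Here one should first confirm that the model-categorical construction of $\rmS_e$ in \cite{Bun:Sm_Spaces} indeed presents this $\infty$-categorical colimit.

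The heart of the argument is the computation of $\rmS_e$ on representables and the claim $\rmS_e(\ul c) \simeq \ast$ for all $c \in \Cart$. Since representables are set-valued, $\rmS_e(\ul c)$ is the geometric realisation of the smooth singular simplicial set $[k] \mapsto C^\infty(\Delta_e^k, c)$ formed with the extended affine simplices. Fixing $c \cong \RN^n$, this simplicial set is a simplicial abelian group, and the convexity of $\RN^n$ together with the affine structure of the $\Delta_e^k$ provides a cone (prism) operator contracting its normalised chain complex, exactly as in the classical proof that convex sets have vanishing reduced singular homology; hence $\rmS_e(\ul{\RN^n})$ is weakly contractible. Alternatively, one may deduce $\rmS_e(\ul c) \simeq \rmS_e(\ul\ast) = \ast$ from the smooth homotopy invariance of $\rmS_e$ established in \cite{Bun:Sm_Spaces}, applied to the scaling homotopy $(t,x) \mapsto t x$. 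I expect this step to be the main obstacle: it is precisely the use of the \emph{extended} (affine, rather than standard positive) simplices that allows the coning and scaling homotopies to be smooth and to remain defined on all of each $\Delta_e^{k} \cong \RN^{k}$, and one must verify that the contraction is implemented at the simplicial level rather than only after realisation.

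Finally I would assemble the pieces. Both $\Pi$ and $\rmS_e$ preserve colimits, and both restrict along $y$ to objectwise contractible functors $\Cart \to \bS$; since $\mathrm{const}_\ast$ is terminal in $\Fun(\Cart, \bS)$, each restriction is canonically equivalent to $\mathrm{const}_\ast$, and hence to the other. By the universal property of $\bH_\infty = \PSh(\Cart)$---a colimit-preserving functor out of a presheaf $\infty$-category is the left Kan extension of its restriction to representables \cite{Lurie:HTT}---this equivalence of restrictions extends uniquely to the desired canonical equivalence $\Pi \simeq \rmS_e$.
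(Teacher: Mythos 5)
Your proposal is correct, but it takes a genuinely different route from the paper's. The paper never touches the universal property of $\scP(\Cart)$; instead it identifies \emph{right} adjoints: using the adjunction $\rmS_e \dashv \rmR_e$ from Theorem~\ref{st:S_e Thm}, it computes $\rmS_e \circ \wc \simeq 1_{\bS}$ (the colimit of a constant $\bbDelta^\opp$-diagram over a contractible index, Lemma~\ref{st:(co)lims of const diags}), passes to right adjoints to get $\Gamma \circ \rmR_e \simeq 1_{\bS}$, and then uses that $\rmR_e$ lands in the $I$-local objects---on which the counit $\wc\,\Gamma \to 1_{\bH_\infty}$ is an equivalence---to conclude $\rmR_e \simeq \wc$, whence $\rmS_e$ is left adjoint to $\wc$ and hence equivalent to $\Pi$. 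Your argument instead compares two colimit-preserving functors on representables and invokes the presheaf universal property; this is a standard and arguably more conceptual $\infty$-categorical argument, and all its ingredients are sound: $\Pi$ preserves colimits as a left adjoint, $\rmS_e$ does because evaluation and geometric realisation do, and $\Pi(\ul{c}) \simeq *$ follows from $\Pi \simeq \colim_{\Cart^\opp}$ as you say. Its cost is the key computation $\rmS_e(\ul{c}) \simeq *$, which the paper gets for free from \cite[Thm.~5.1]{Bun:Sm_Spaces} via $\rmS_e\,\ul{M} \simeq M$ (cf.\ Remark~\ref{eg:Mfd hkra H_oo}), while the paper's route re-uses the adjunction data it has already assembled. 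One caution on your preferred implementation of that step: the affine cone operator is \emph{not} smooth at the apex---the reparametrisation $t_i \mapsto t_i/(1-t_0)$ degenerates on the hyperplane $t_0 = 1$, which lies inside the extended simplex---so the chain-level coning argument fails as literally stated; by contrast, the prism operator induced by the smooth scaling homotopy $(t,x) \mapsto tx$ is assembled from affine maps $\Delta_e^{k+1} \to \Delta_e^1 \times \Delta_e^k$ and is smooth, so your fallback via smooth homotopy invariance (or simply quoting $\rmS_e\,\ul{c} \simeq c \simeq *$) does close the step you rightly flagged as the main obstacle.
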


This has already been argued in~\cite{BEBdBP:Class_sp_of_oo-sheaves} and proven on the level of model categories of simplicial presheaves in~\cite{Bun:Sm_Spaces}; here we provide an $\infty$-categorical proof based on findings from~\cite{Bun:Sm_Spaces}.

Let $\rmL \colon \bH \to \bH'$ be a functor between $\infty$-topoi which preserves finite products and geometric realisations of simplicial objects.
We show that $\rmL$ maps principal $\infty$-bundles in $\bH$ to principal $\infty$-bundles in $\bH'$ and group extensions in $\bH$ to group extensions in $\bH'$ (this relies on Theorem~\ref{st:grp actions are gpd obs}).
In particular, the functor $\rmS_e \colon \bH_\infty \to \bS$ has these properties.
In $\bS$, a string group extension of a compact, simple, simply connected Lie group $H$ can be defined as usual:
it is an extension $A \to \String(H) \to H$ of group objects in $\bS$ such that $\String(H)$ is 3-connected and such that the morphism $\String(H) \to H$ induces an isomorphism on all homotopy groups of the underlying spaces except for in degree three.
Using that $\rmS_e \ul{M} \simeq M$ for any manifold $M$ (see~\cite{Bun:Sm_Spaces} for a proof of this classical fact using the present technology) and that $\rmS_e$ preserves principal $\infty$-bundles and group extensions, we can now transfer this definition to $\bH_\infty$:

\begin{definition}
\label{def:intro String(H) in H_oo}
Let $H$ be a compact, simple, and simply connected Lie group, and let $\ul{H}$ denote the induced group object in $\bH_\infty$.
An extension of the group object $\ul{H}$ in $\bH_\infty$ is called a \emph{smooth string group extension of $H$} if its image under $\rmS_e$ is a string group extension in $\bS$.
\end{definition}

We show that the string group models conjectured in~\cite{BMS:Sm2Grp} fit within Definition~\ref{def:intro String(H) in H_oo}.
Let $M$ be a manifold endowed with a bundle gerbe $\CG$ (a categorified hermitean line bundle).
In~\cite{BMS:Sm2Grp}, we addressed the question of when an action of a Lie group $H$ on $M$ lifts to an equivariant structure on $\CG$.
We found that the obstruction to such a lift is captured by an extension
\begin{equation}
\label{eq:intro Sym(G) ext general}
	\HLB^M \overset{i}{\longrightarrow} \Sym(\CG) \overset{p}{\longrightarrow} H
\end{equation}
of $H$ by the smooth 2-group $\HLB^M$ of hermitean line bundles on $M$.
Each of the above objects can be interpreted as a group object in $\bH_\infty$ via the nerve functor $N$, and so the sequence~\eqref{eq:intro Sym(G) ext general} enhances to an extension
\begin{equation}
\label{eq:intro NSym(G) ext general}
	N \big( \HLB^M \big)  \overset{Ni}{\longrightarrow} N \big( \Sym(\CG) \big) \overset{Np}{\longrightarrow} \ul{H}
\end{equation}
of $\ul{H}$ as a group object in $\bH_\infty$.
The case relevant for string group extensions is $M=H$, where $H$ is a compact, simple and simply connected Lie group, acting on itself via left multiplication.
Since $H$ is 2-connected, there is an objectwise equivalence $\HLB^H \simeq \rmB(\sfU(1)^H)$, and since $H$ is 1-connected, there is a smooth homotopy equivalence $\sfU(1)^H \simeq \sfU(1)$.
Therefore, the extending group in~\eqref{eq:intro Sym(G) ext general} has the correct homotopy type for a string group extension.
We prove:

\begin{theorem}
\label{st:intro String model from Sym(G)}
Let $H$ be a compact, simple, simply connected Lie group, and let $N$ be the nerve functor.
Consider the left-action of $H$ on itself via left multiplication.
Let $\CG \in \Grb(H)$ be a gerbe on $H$ whose class in $\rmH^3(H;\ZN) \cong \ZN$ is a generator.
The sequence
\begin{equation}
\label{eq:intro Sym_G String sequence}
\begin{tikzcd}
	N \big( \HLB^H \big) \ar[r, "Ni"]
	& N \big( \Sym(\CG) \big) \ar[r, "Np"]
	& \ul{H}
\end{tikzcd}
\end{equation}
is a smooth string group extension of $H$.
\end{theorem}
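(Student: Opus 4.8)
The plan is to verify the two conditions of Definition~\ref{def:intro String(H) in H_oo}: that \eqref{eq:intro Sym_G String sequence} is an extension of group objects in $\bH_\infty$, and that its image under $\rmS_e$ is a string group extension in $\bS$. The first condition is exactly the output of~\cite{BMS:Sm2Grp}: specialising the obstruction sequence~\eqref{eq:intro Sym(G) ext general} to $M=H$ exhibits \smash{$\widehat{N\HLB^H} \xrightarrow{\widehat{Ni}} \widehat{N\Sym(\CG)} \xrightarrow{\widehat{Np}} \ul{\wH}$} as an extension of the group object $\ul{\wH}$ by $\widehat{N\HLB^H}$ in $\bH_\infty$; equivalently, by Theorem~\ref{st:intro oo-grp ext via pfbuns}, $\widehat{Np}$ exhibits $N\Sym(\CG)$ as a principal $N\HLB^H$-bundle over $\ul{H}$. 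It thus remains to analyse $\rmS_e$ of this extension. Since $\rmS_e \simeq \Pi$ by Theorem~\ref{st:intro S_e and cohesion}, the functor $\rmS_e$ preserves finite products and geometric realisations of simplicial objects; by the preservation result recorded in the introduction (product- and realisation-preserving functors of $\infty$-topoi send extensions to extensions), $\rmS_e$ therefore carries \eqref{eq:intro Sym_G String sequence} to a fibre sequence
\begin{equation}
  \rmS_e(N\HLB^H) \longrightarrow \rmS_e(N\Sym(\CG)) \longrightarrow \rmS_e\,\ul{H}
\end{equation}
of group objects in $\bS$, in which the right-hand map is a principal $\rmS_e(N\HLB^H)$-bundle.

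Next I would identify the three terms. The base is $\rmS_e\,\ul{H} \simeq H$, the underlying space of the Lie group, by the computation $\rmS_e\,\ul{M} \simeq M$ from~\cite{Bun:Sm_Spaces}. For the fibre, I combine the objectwise equivalence $\HLB^H \simeq \rmB(\sfU(1)^H)$ (valid since $H$ is $2$-connected) with the smooth homotopy equivalence $\sfU(1)^H \simeq \sfU(1)$ (valid since $H$ is $1$-connected): here $N\HLB^H$ is the bar construction $B(\sfU(1)^H)$ of the one-object $2$-group $\rmB(\sfU(1)^H)$, and since $\rmS_e$ carries the bar construction to the bar construction and satisfies $\rmS_e\sfU(1) \simeq \sfU(1)$, one obtains $\rmS_e(N\HLB^H) \simeq B\sfU(1) \simeq K(\ZN,2)$. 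Writing $G \coloneqq \rmS_e(N\Sym(\CG))$, the fibre sequence becomes $K(\ZN,2) \to G \to H$.

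The long exact sequence in homotopy then determines everything from the single connecting map $\partial \colon \pi_3 H \to \pi_2 K(\ZN,2) \cong \ZN$. Since $\pi_n K(\ZN,2)$ vanishes except for $\pi_2 \cong \ZN$, one gets $\pi_n G \cong \pi_n H$ for $n \geq 4$ automatically, and $\pi_n G = 0$ for $n \leq 1$ from $\pi_{\leq 2}H = 0$. In the critical range the sequence reads
\begin{equation}
  \pi_3 K(\ZN,2) \longrightarrow \pi_3 G \longrightarrow \pi_3 H \xrightarrow{\ \partial\ } \pi_2 K(\ZN,2) \longrightarrow \pi_2 G \longrightarrow \pi_2 H,
\end{equation}
where $\pi_3 K(\ZN,2) = 0$ and $\pi_2 H = 0$. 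Hence $G$ is $3$-connected and $G \to H$ induces isomorphisms on all $\pi_n$ except $\pi_3$ if and only if $\partial$ is an isomorphism, so the theorem reduces to showing exactly this.

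The main obstacle is this last point, and it is where the hypothesis on $\CG$ enters. Because $G \to H$ is a principal $K(\ZN,2)$-bundle, it is classified by a map $f \colon H \to B K(\ZN,2) = K(\ZN,3)$, i.e.\ by a class $c \in \rmH^3(H;\ZN)$, and under this identification $\partial$ is the induced map $f_* \colon \pi_3 H \to \pi_3 K(\ZN,3) \cong \ZN$. Since $H$ is $2$-connected, the Hurewicz homomorphism $\pi_3 H \xrightarrow{\cong} \rmH_3(H;\ZN)$ is an isomorphism and $f_*$ is evaluation of $c$ against the Hurewicz image; thus $\partial$ is an isomorphism if and only if $c$ generates $\rmH^3(H;\ZN) \cong \ZN$. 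It therefore remains to identify the classifying class $c$ of the principal $\rmS_e(N\HLB^H)$-bundle $G \to H$ with the Dixmier--Douady class $[\CG]$. I expect this to follow by tracing the construction of $\Sym(\CG)$ in~\cite{BMS:Sm2Grp}: the extension~\eqref{eq:intro Sym(G) ext general} exhibits $N\Sym(\CG)$ as a principal $\rmB(\sfU(1)^H) \simeq \rmB\sfU(1)$-bundle over $\ul{H}$ whose smooth classifying class in $\rmH^3(\ul{H};\ZN)$ is precisely the class of $\CG$, and since $\rmS_e$ is compatible with this class, $c = [\CG]$. As $[\CG]$ is a generator of $\rmH^3(H;\ZN) \cong \ZN$ by hypothesis, $\partial$ is an isomorphism, completing the verification that \eqref{eq:intro Sym_G String sequence} is a smooth string group extension of $H$.
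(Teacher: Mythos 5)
Your skeleton matches the paper's proof: first establish the extension property in $\bH_\infty$, then identify the $\rmS_e$-image of the fibre with $K(\ZN,2)$ via $N\,\HLB^H \simeq \rmB(\ul{\sfU(1)}^{\ul{H}})$ and the $I$-local equivalence $\ul{\sfU(1)}^{\ul{H}} \simeq \ul{\sfU(1)}$, and finally reduce everything to the classifying class of the resulting $K(\ZN,2)$-bundle over $H$ being a generator of $\rmH^3(H;\ZN)$. Your long-exact-sequence analysis is correct and is exactly the paper's remark following Definition~\ref{def:String(H) in Spaces}, where condition~(2) is formulated directly as the classifying-class condition, so no new content is needed there. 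One gloss worth noting on the first step: the extension property in $\bH_\infty$ is not literally ``the output of \cite{BMS:Sm2Grp}'' --- that paper produces a sequence of smooth 2-groups, and the present paper still has to verify criterion~(4) of Theorem~\ref{st:oo-grp ext via pfbuns} after applying the nerve, by checking that $Np$ is an effective epimorphism (objectwise surjectivity on $\pi_0$, using essential surjectivity of $p$ on fibres) and that principality at the 2-group level transfers along the right-adjoint nerve functor via Lemma~\ref{st:principality condition} and Proposition~\ref{st:pfbun characterisation}. This is routine given the tools you invoke, but it is a verification, not a citation.

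The genuine gap is the step you explicitly leave as an expectation: ``since $\rmS_e$ is compatible with this class, $c = [\CG]$''. That sentence compresses the technical heart of the paper's proof. First, in $\bH_\infty$ the bundle is classified against $\rmB\, N\,\HLB^H \simeq \rmB^2(\ul{\sfU(1)}^{\ul{H}})$, \emph{not} against $\rmB^2\ul{\sfU(1)}$: your parenthetical ``$\rmB(\sfU(1)^H) \simeq \rmB\sfU(1)$'' holds only $I$-locally, and indeed the whole point of this model is that it is not an extension by $\rmB\,\ul{\sfU(1)}$ in $\bH_\infty$. Second, to compare the smooth class with a Čech class one must show that $\pi_0\, \ul{\bH}_\infty\big(\ul{H}, \rmB^2 \ul{\sfU(1)}^{\ul{H}}\big) \cong \check{\rmH}^2\big(H; \ul{\sfU(1)}^{\ul{H}}\big)$; since $\bH_\infty$ is a \emph{presheaf} topos this is not automatic, and the paper proves it by establishing descent for $\rmB^n(\ul{\sfU(1)}^{\ul{X}})$ (Lemma~\ref{st:B^nU(1)^N}), an induction using simple connectivity of $X$, the sequence $\ZN \to \ul{\RN}^{\ul{X}} \to \ul{\sfU(1)}^{\ul{X}}$, and fineness of $\ul{\RN}^{\ul{X}}$. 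Third, one needs that $(\ev_e)_* \colon \check{\rmH}^n\big(H;\ul{\sfU(1)}^{\ul{H}}\big) \to \check{\rmH}^n\big(H;\ul{\sfU(1)}\big)$ is an isomorphism (\cite[Prop.~8.11]{BMS:Sm2Grp}), and that $\rmS_e$ carries the classifying morphism of a bundle to a classifying morphism of the image bundle, which is Proposition~\ref{st:classifying morphisms under L} and again rests on Theorem~\ref{st:grp actions are gpd obs}. Only after these ingredients does the identification $c = [\CG]$ with the Dixmier--Douady class computed in \cite[Sec.~8]{BMS:Sm2Grp} go through. So your reduction is sound and the expected statement is true, but the one sentence hiding it must expand into essentially the entire body of the paper's Section~\ref{sec:String Model}.
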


This string group model is somewhat similar to the model in~\cite{FRS:U1-Gerbe_connections}, which is obtained by studying symmetries of gerbes with connection.
However, here the presence of connections forces the extending group to be the delooping $\rmB \sfU(1)$.
It is interesting that the connection does not change the homotopy type of the extension.
In~\cite[Def.~5.33]{BMS:Sm2Grp}, we also constructed a second extension of $H$ with a connection on the gerbe $\CG$ acting as crucial auxiliary data.
We showed that this extension is equivalent to the one in~\eqref{eq:intro Sym(G) ext general}~\cite[Thm.~5.36]{BMS:Sm2Grp}, it gives rise to a second smooth string group extension of $H$.

Finally, we expect that most (or possibly all) of the aforementioned smooth string group models fit within Definition~\ref{def:intro String(H) in H_oo}.
Checking this in full detail in each case would go beyond the scope of this article, but we outline the relevant arguments here:
for the models in~\cite{FRS:U1-Gerbe_connections} and~\cite{Waldorf:String_via_TR} the methods we use here should adapt in a straightforward manner.
The models~\cite{Henriques:Integrating_L_oo, SP:String} should fit into the present framework via the presentation of sheaves of $\infty$-groupoids on $\Cart$ as $\infty$-Lie groupoids (see, for instance,~\cite{Nuiten:Stacks_and_fractions, Nuiten:Hypercovers, Schreiber:DCCT}).
For the infinite-dimensional models in~\cite{Stolz:Pos_Ric} and~\cite{BCSS:String_via_Loops}, one needs to be able to compute homotopy types of infinite-dimensional manifolds, such as gauge groups and loop spaces, via the functor $\rmS_e$.
In the case of loop spaces, this is facilitated by the Smooth Oka Principle from~\cite{SS:Equivar_oo-Bundles}.
For gauge groups one needs that the homotopy type induced from the infinite-dimensional manifold structure agrees with the homotopy type extracted by using $\rmS_e$.
This should follow from general results on the relation between diffeological spaces and infinite-dimensional manifolds in~\cite{Kihara:smooth_oo-dim_Mfds}.

\paragraph*{Outline.}

In Section~\ref{sec:Sm spaces and oo-topoi} we investigate the functor $\rmS_e \colon \bH_\infty \to \bS$.
Further, we recall some basic notions and facts about $\infty$-topoi and prove Theorem~\ref{st:intro S_e and cohesion}.

Section~\ref{sec:PBuns and Grp Exts in oo-topoi} is devoted to the theory of group objects, group extensions, and principal $\infty$-bundles in $\infty$-topoi.
We recall the definitions of these notions from~\cite{NSS:oo-bundles} and provide new characterisations of principal $\infty$-bundles and group extensions.
In particular, we prove Theorem~\ref{st:intro oo-grp ext via pfbuns}.

In Section~\ref{sec:HoThy Smooth String}, we use the results obtained thus far to transfer the definition of string group extensions in $\bS$ to the $\infty$-topos $\bH_\infty$.
After recalling from~\cite{BMS:Sm2Grp} the smooth 2-group extensions which control equivariant structures on gerbes, we show that these extensions give rise to new smooth models for string group, thus proving Theorem~\ref{st:intro String model from Sym(G)}.

Finally, in Appendix~\ref{app:Actions and CatObs} we prove Theorem~\ref{st:grp actions are gpd obs}:
we show that group actions in an $\infty$-topos give rise to groupoid objects.

\paragraph*{Notation.}

We usually make no notational distinction between ordinary categories and $\infty$-categories; the nerve functor will be used implicitly where necessary.

We write $\bbDelta$ for the simplex category, and $\sSet$ for the category of simplicial sets.
In a simplicial category $\scC$, we denote the simplicially enriched hom-functor by $\ul{\scC}(-,-) \colon \scC^\opp \times \scC \to \sSet$.

We write $|{-}| = \colim^\scC_{\bbDelta^\opp}$ for the colimit of simplicial objects in an $\infty$-category $\scC$.
Moreover, we also refer to $|X|$ (if it exists) as the geometric realisation of a simplicial object $X$ in $\scC$.

Usually, we denote $\infty$-categories by letters $\scC, \scD, \dots$, but for $\infty$-topoi we use bold-face letters $\bH$.
In particular, the $\infty$-topos of spaces is denoted by $\bS$.
We write $\ul{\scD}(-,-) \colon \scD^\opp \times \scD \to \bS$ for the mapping spaces in an $\infty$-category $\scD$.

We model $\infty$-categories by quasi-categories.
Given an $\infty$-category $\scC$ and a simplicial set $K \in \sSet$, we write $\Fun(K,\scC) = \ul{\sSet}(K,\scC) = \scC^K$ for the $\infty$-category of functors from $K$ to $\scC$.

We let $\bbDelta_+$ denote the augmented simplex category, i.e.~the category $\bbDelta$ with an initial object adjoined.
We usually do not distinguish notationally between augmented simplicial objects $X \in \Fun(\bbDelta_+^\opp, \scC)$ in an $\infty$-category $\scC$ and their underlying simplicial objects.
If we wish to make this distinction explicit for clarity, we will denote the latter by the restriction $X_{|\bbDelta^\opp}$.

If $\scM$ is a simplicial model category, then $\scM^\circ$ is the full simplicial subcategory on the cofibrant-fibrant objects of $\scM$.
Recall from~\cite{Lurie:HTT} that the coherent nerve $N_\Delta (\scM^\circ)$ is an $\infty$-category.

If $\scC$ is a (small) $\infty$-category, we write $\scP(\scC) = \Fun(\scC^\opp, \bS)$ for the $\infty$-category of presheaves of spaces on $\scC$.

\paragraph*{Acknowledgements.}

The author would like to thank Lukas Müller, Birgit Richter, Christoph Schweigert, Walker Stern, and Konrad Waldorf for helpful discussions.
The author acknowledges partial support by the Deutsche Forschungsgemeinschaft (DFG, German Research Foundation) under Germany's Excellence Strategy---EXC 2121 ``Quantum Universe''---390833306.

\section{Smooth spaces and $\infty$-topoi}
\label{sec:Sm spaces and oo-topoi}

In this section we recall and develop some background on the $\infty$-categories most relevant in this paper.
Most importantly, we consider a presheaf $\infty$-category $\bH_\infty$, whose objects can be interpreted as a general notion of smooth spaces.
We study an $\infty$-categorical version $\rmS_e \colon \bH_\infty \to \bS$ of a Quillen functor considered in~\cite{Bun:Sm_Spaces}, which provides a singular complex functor for smooth spaces.
Subsequently, we briefly recall the definition of an $\infty$-topos and of cohesion of $\infty$-topoi, and we show that $\rmS_e$ is part of the cohesion of $\bH_\infty$.

\subsection{Presheaves on cartesian spaces and the smooth singular complex}
\label{sec:Psh(Cart)}

We let $\Cart$ denote the (small) category whose objects are submanifolds of $\RN^\infty$ that are diffeomorphic to $\RN^n$ for any $n \in \NN_0$, and whose morphisms are the smooth maps between these manifolds.
We let
\begin{equation}
	\bH_\infty \coloneqq \scP(\Cart) = \Fun(\Cart^\opp, \bS)
\end{equation}
denote the $\infty$-category of presheaves of spaces on $\Cart$.
The $\infty$-category $\bH_\infty$ is presented by several model categories of simplicial presheaves on $\Cart$---for example, there is a canonical equivalence~\cite{Lurie:HTT}
\begin{equation}
	\bH_\infty \simeq N_\Delta \big( (\scH_\infty^i)^\circ \big)\,,
\end{equation}
where $\scH_\infty^i$ is the category of simplicial presheaves on $\Cart$, endowed with the injective model structure.

Let $I \coloneqq \{ c \times \RN \to c\, | \, c \in \Cart\}$ denote the set of morphisms in $\Cart$ of the form $1_c \times c_\RN$, where $c_\RN \colon \RN \to *$ is the map that collapses the real line to the point.
We can localise both $\scH_\infty^i$ and $\bH_\infty$ at this set of morphisms (or rather at its image under the Yoneda embedding), and there is still a canonical equivalence between the localisations~\cite{Lurie:HTT},
\begin{equation}
	N_\Delta \big( (L_I \scH_\infty^i)^\circ \big) \simeq L_I \bH_\infty\,.
\end{equation}
The simplicial model categories $\scH_\infty^i$ and $L_I \scH_\infty^i$ were the subject of~\cite{Bun:Sm_Spaces}.
On the level of their underlying $\infty$-categories, one of the main results of that paper can be phrased as follows.
For $k \in \NN_0$, we let $\Delta_e^k \coloneqq \{ t \in \RN^{k+1}\, | \, \sum_{i = 0}^k t^i = 1 \}$ denote the extended (affine) $k$-simplex.
This is a $k$-dimensional affine subspace of $\RN^{k+1}$, and hence forms a cartesian space.
The face and degeneracy maps of the standard topological simplices $|\Delta^k|$ extend to the extended affine simplices, turning them into a functor
\begin{equation}
	\Delta_e \colon \bbDelta \to \Cart\,,
	\quad
	[k] \mapsto \Delta_e^k\,.
\end{equation}
We let $\rmS_e \colon \bH_\infty \to \bS$ denote the composition of functors
\begin{equation}
\label{eq:def S_e}
\begin{tikzcd}[column sep=1.25cm]
	\rmS_e \colon \bH_\infty \ar[r, "\Delta_e^*"]
	& \Fun(\bbDelta^\opp, \bS) \ar[r, "\colim"]
	& \bS\,.
\end{tikzcd}
\end{equation}
We refer to this functor as the \emph{smooth singular complex functor}; viewing the $\infty$-category $\bH_\infty$ as an $\infty$-category of smooth spaces, $\rmS_e$ thus assigns an underlying ordinary space to a smooth space.

\begin{theorem}
\label{st:S_e Thm}
\emph{\cite{Bun:Sm_Spaces}}
There exist adjunctions of $\infty$-categories
\begin{equation}
\begin{tikzcd}[column sep=3cm, row sep=1.75cm]
	\bH_\infty \ar[r, shift left=0.14cm, "\Loc", "\perp"' yshift=0.05cm]
	\ar[d, shift left=-0.14cm, "\rmS_e"', "\dashv" xshift=-0.05cm]
	& L_I \bH_\infty \ar[l, hookrightarrow, shift left=0.14cm, "\iota"]
	\ar[d, "\rmS_e^I" description]
	\\
	\bS \ar[u, shift left=-0.14cm, "\rmR_e"']
	\ar[r, equal]
	& \bS \ar[u, bend left=55, "\rmL_e^I" description, "\dashv"' {xshift=0.2cm,pos=0.45}]
	\ar[u, bend left=-55, "\rmR_e^I" description, "\dashv" {xshift=-0.2cm,pos=0.45}]
\end{tikzcd}
\end{equation}
where $\rmS_e^I$ is the restriction of $\rmS_e$ to $L_I \bH_\infty \subset \bH_\infty$.
Furthermore, the following statements hold true:
\begin{myenumerate}
\item The functor $\rmS_e \colon \bH_\infty \to \bS$ preserves and reflects $I$-local equivalences.

\item The morphism $\iota$ is fully faithful, i.e.~$\Loc$ is a reflective localisation.

\item The three right-hand vertical functors are equivalences of $\infty$-categories.

\item The diagram obtained by omitting the morphism $\rmL^I_e$ is (weakly) commutative.
\end{myenumerate}
\end{theorem}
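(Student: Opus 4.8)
The plan is to reduce everything to the single assertion that the constant-presheaf functor $\wc \colon \bS \to \bH_\infty$ is fully faithful with essential image the $I$-local objects. First I would produce the adjunction $\rmS_e \dashv \rmR_e$. The functor $\rmS_e = \colim_{\bbDelta^\opp}\circ \Delta_e^*$ preserves all colimits, since restriction along $\Delta_e$ is computed objectwise and $\colim_{\bbDelta^\opp}$ is itself a colimit; as $\bH_\infty$ and $\bS$ are presentable, $\rmS_e$ admits a right adjoint $\rmR_e$ by the adjoint functor theorem. Using the known computation $\rmS_e\ul{M}\simeq M$ together with the fact that every $c\in\Cart$ is diffeomorphic to some $\RN^n$ and hence smoothly contractible, one has $\rmS_e(\ul{c})\simeq *$, so a Yoneda argument gives $\rmR_e(K)(c)\simeq \ul{\bS}(\rmS_e\ul{c},K)\simeq K$; that is, $\rmR_e\simeq\wc$ is the constant-presheaf functor. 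The reflective localization $\Loc\dashv\iota$ with $\iota$ fully faithful (claim~(2)) is the standard localization of a presentable $\infty$-category at a small set of maps~\cite{Lurie:HTT}.

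Next I would show that $\rmS_e$ inverts $I$. Each generator $\ul{c\times\RN}\to\ul{c}$ is sent by $\rmS_e$ to a map of contractible spaces, hence to an equivalence; since the class of morphisms that a colimit-preserving functor inverts is strongly saturated and $I$-local equivalences form the strongly saturated class generated by $I$, it follows that $\rmS_e$ inverts every $I$-local equivalence, giving the preservation half of claim~(1). Consequently $\rmS_e$ factors through the localization as $\rmS_e\simeq\rmS_e^I\circ\Loc$ with $\rmS_e^I\simeq\rmS_e\circ\iota$, which identifies $\rmS_e^I$ as the restriction of $\rmS_e$ and records the upper commutativity of claim~(4).

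The heart of the argument, and the step I expect to be the main obstacle, is that $\rmS_e^I$ is an equivalence (claim~(3)). I would prove this by exhibiting the corestriction $\wc^I\colon\bS\to L_I\bH_\infty$ (constant presheaves are manifestly $I$-local) as an inverse. Full faithfulness of $\wc^I$ follows because the counit $\rmS_e\wc K\simeq\colim_{\bbDelta^\opp}\mathrm{const}_K\simeq K$ is an equivalence, using that $\bbDelta^\opp$ has a terminal object and is therefore weakly contractible. Essential surjectivity is the delicate point: given an $I$-local $X$, I would use that the collapse $!_c\colon c\to *$ factors as a composite of the projections in $I$ (via a diffeomorphism $c\cong\RN^n$), so that $I$-locality forces $X(!_c)\colon X(*)\weq X(c)$ to be an equivalence for every $c$. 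Naturality of the maps $!_c$, which comes from terminality of $*$, then assembles these into a levelwise equivalence $\wc(X(*))\weq X$ of presheaves, exhibiting $X$ in the image of $\wc^I$. Hence $\wc^I$ is an equivalence, and so is its adjoint $\rmS_e^I$.

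With $\rmS_e^I$ an equivalence the remaining assertions are formal. Its inverse serves simultaneously as left and right adjoint, producing the triple adjunction $\rmL_e^I\dashv\rmS_e^I\dashv\rmR_e^I$ with $\rmL_e^I\simeq\rmR_e^I\simeq(\rmS_e^I)^{-1}$ all equivalences (claim~(3)), where $\rmR_e^I\simeq\wc^I$. For the reflection half of claim~(1): if $\rmS_e(f)$ is an equivalence then so is $\rmS_e^I(\Loc f)\simeq\rmS_e(f)$, and since the equivalence $\rmS_e^I$ reflects equivalences, $\Loc f$ is an equivalence, i.e.\ $f$ is an $I$-local equivalence. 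The outstanding commutativity in claim~(4) reduces to $\iota\circ\rmR_e^I\simeq\rmR_e$, equivalently $\iota\wc^I\simeq\wc$, which holds because $\wc$ already lands among the $I$-local objects. The only genuinely geometric input, beyond formal nonsense, is the smooth contractibility of cartesian spaces and the compatibility of the collapse maps with the cosimplicial structure of $\Delta_e^\bullet$, which is precisely what powers the essential-surjectivity step.
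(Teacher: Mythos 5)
Your proof is correct, and it takes a genuinely different route from the paper's. The paper proves Theorem~\ref{st:S_e Thm} essentially by citation: the adjunctions and claims (1)--(4) are imported from the model-categorical companion paper \cite{Bun:Sm_Spaces}, where the functors are presented as Quillen adjunctions between simplicial model categories of simplicial presheaves (Prop.~3.6, Cors.~3.12, 3.15, 3.37 and Thms.~3.14, 3.40 of that paper), with the $\infty$-categorical statement obtained by passing to underlying $\infty$-categories. You instead give a self-contained $\infty$-categorical argument whose only geometric input is $\rmS_e\ul{c} \simeq *$ for $c \in \Cart$ (available from the cited computation $\rmS_e\ul{M} \simeq M$, or directly from smooth contractibility of cartesian spaces): the adjoint functor theorem produces $\rmR_e$, a Yoneda computation identifies $\rmR_e \simeq \wc$, strong saturation of the class of morphisms inverted by a colimit-preserving functor gives the preservation half of claim~(1), and the identification of the $I$-local objects with the essentially constant presheaves exhibits the corestriction of $\wc$ as an inverse of $\rmS_e^I$, after which claims~(3), the reflection half of~(1), and~(4) are formal. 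What your route buys is independence from the model-categorical apparatus; indeed your computations $\rmR_e \simeq \wc$ and $\rmS_e \circ \wc \simeq 1_{\bS}$ (colimit of a constant diagram over the weakly contractible $N\bbDelta^\opp$) are exactly the ones the paper deploys later in its proof of Theorem~\ref{st:S_e and cohesion}, so on your route the cohesion statement $\Pi \simeq \rmS_e$ comes nearly for free. What the paper's route buys is point-set control---Quillen presentations of all the functors---which it exploits elsewhere, e.g.\ in the explicit \v{C}ech-cocycle comparisons of Section~\ref{sec:String Model}. Two small repairs: $[0]$ is \emph{terminal} in $\bbDelta$, so $\bbDelta^\opp$ has an \emph{initial} object; weak contractibility of the nerve, which is what you actually use, holds either way (cf.~Example~\ref{eg:contractible Cats for const diags}). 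And in the essential-surjectivity step, the assembly of the equivalences $X(*) \simeq X(c)$ into a morphism $\wc(X(*)) \to X$ is cleanest via $X(*) \simeq \lim_{\Cart^\opp} X$ (since $*$ is terminal in $\Cart$), which packages the naturality you invoke without any appeal to strict presheaf models.
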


\begin{proof}
The first claim follows readily from Proposition~3.6, Corollary~3.12, and Corollary~3.37 of~\cite{Bun:Sm_Spaces}.
(Note that model categorical presentations of $\bH_\infty$, $L_I \bH_\infty$, and $\bS$ are used in~\cite{Bun:Sm_Spaces}, and the functors in the statement are presented by Quillen functors.)

Further, claim~(1) follows readily from~\cite[Cor.~3.15]{Bun:Sm_Spaces}.
Claim~(2) follows from general properties of $\infty$-categories underlying simplicial model categories and their Bousfield localisations~\cite{Lurie:HTT}.
Claim~(3) is the version on the underlying $\infty$-categories of Theorems~3.14 and~3.40 of~\cite{Bun:Sm_Spaces}.
Claim~(4) holds true because the diagram of the right-adjoints clearly commutes ($\iota$ is an inclusion, and $\rmR_e$ simply factors through $L_I \bH_\infty \subset \bH_\infty$~\cite{Bun:Sm_Spaces}).
\end{proof}

\begin{remark}
\label{eg:Mfd hkra H_oo}
There is a fully faithful embedding $\Mfd \hookrightarrow \bH_\infty$ from the category of manifolds into $\bH_\infty$:
it sends a manifold $M$ to the presheaf $\ul{M}$ of discrete spaces that maps a cartesian space $c$ to the set $\Mfd(c, M)$ of smooth maps from $c$ to $M$.
By~\cite[Thm.~5.1]{Bun:Sm_Spaces} there is a canonical equivalence of spaces $M \simeq \rmS_e \ul{M}$ for any $M \in \Mfd$, which is natural in $M$.
\qen
\end{remark}

\begin{proposition}
\label{st:Loc pres fin prods}
The localisation functor $\Loc \colon \bH_\infty \to L_I \bH_\infty$ preserves finite products.
The class $W_I$ of $I$-local equivalences in $\bH_\infty$ is closed under finite products.
\end{proposition}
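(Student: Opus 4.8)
The plan is to reduce both assertions to the single statement that the smooth singular complex functor $\rmS_e \colon \bH_\infty \to \bS$ preserves finite products. The reduction rests on Theorem~\ref{st:S_e Thm}(1): since $\rmS_e$ both preserves and reflects $I$-local equivalences, a morphism $f$ of $\bH_\infty$ lies in $W_I$ if and only if $\rmS_e(f)$ is an equivalence in $\bS$. In other words, $W_I$ is exactly the preimage under $\rmS_e$ of the equivalences of $\bS$, and equivalences of $\bS$ are manifestly closed under finite products. So once $\rmS_e$ is known to commute with finite products, the closure of $W_I$ is immediate, and the product-preservation of $\Loc$ will follow by a short formal argument.

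The key step I would carry out first is therefore that $\rmS_e$ preserves finite products. By its definition in~\eqref{eq:def S_e}, $\rmS_e$ factors as $\colim_{\bbDelta^\opp} \circ\, \Delta_e^*$. The restriction functor $\Delta_e^* \colon \bH_\infty \to \Fun(\bbDelta^\opp, \bS)$ is precomposition with $\Delta_e^\opp$, so it computes all limits and colimits objectwise; in particular it preserves finite products. The second factor $\colim_{\bbDelta^\opp} \colon \Fun(\bbDelta^\opp, \bS) \to \bS$ is geometric realisation of simplicial spaces, and it preserves finite products because $\bbDelta^\opp$ is a sifted $\infty$-category, so that sifted colimits in $\bS$ commute with finite products~\cite{Lurie:HTT}. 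This last point is the crux, and the only place where the shape of the diagram enters: it is precisely what fails for a general colimit and what prevents the proposition from being a formal consequence of $\rmS_e$ being a left adjoint (which by itself would only guarantee preservation of colimits, not of products). Composing the two factors yields that $\rmS_e$ preserves finite products.

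The two claims then follow without further difficulty. For the closure of $W_I$: given $f_1, f_2 \in W_I$, the maps $\rmS_e(f_1), \rmS_e(f_2)$ are equivalences, hence so is $\rmS_e(f_1 \times f_2) \simeq \rmS_e(f_1) \times \rmS_e(f_2)$, and reflecting back gives $f_1 \times f_2 \in W_I$; the empty product is trivial, as the identity of the terminal object lies in $W_I$. For the product-preservation of $\Loc$: the terminal object of $\bH_\infty$ is $I$-local (it is an empty limit, and $L_I \bH_\infty$ is reflective, hence closed under limits), so $\Loc$ preserves it. For a binary product $X \times Y$, each component of the unit $\eta_X \colon X \to \iota \Loc X$ is an $I$-local equivalence, and $\iota \Loc X \times \iota \Loc Y$ is again $I$-local by closure of $L_I \bH_\infty$ under products. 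By the closure of $W_I$ just established, $\eta_X \times \eta_Y \colon X \times Y \to \iota \Loc X \times \iota \Loc Y$ is then an $I$-local equivalence with $I$-local target, so it exhibits the reflection of $X \times Y$; consequently the canonical comparison $\Loc(X \times Y) \to \Loc X \times \Loc Y$ is an equivalence. I expect the siftedness input in the middle paragraph to be the only genuine content, everything else being bookkeeping around the characterisation of $W_I$ via $\rmS_e$.
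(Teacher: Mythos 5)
Your proof is correct, but it takes a genuinely different route from the paper's. The paper first identifies $L_I \bH_\infty$ with the localisation of $\bH_\infty$ at the collapse morphisms $c \to *$ for $c \in \Cart$ \cite[Prop.~2.13]{Bun:Sm_Spaces}, observes that this class is stable under finite products because $\Cart$ has finite products, and then invokes a general theorem of Cisinski \cite[Cor.~7.1.16]{Cisinski:HCats_HoAlg} to conclude that $\Loc$ preserves finite products; the closure of $W_I$ is then \emph{deduced} from this via the characterisation of $W_I$ as the morphisms inverted by $\Loc$ \cite[Prop.~5.5.4.15]{Lurie:HTT}. You run the deduction in the opposite order: you first prove that $\rmS_e$ preserves finite products (objectwise preservation by $\Delta_e^*$, plus siftedness of $\bbDelta^\opp$ for the realisation), use Theorem~\ref{st:S_e Thm}(1) to identify $W_I$ as the preimage under $\rmS_e$ of the equivalences of $\bS$ and so obtain the closure of $W_I$, and then recover the product-preservation of $\Loc$ by the standard reflective-localisation bookkeeping (units lie in $W_I$, local objects are closed under products, and a $W_I$-morphism with local target exhibits the reflection) --- all of which you carry out correctly. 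What each approach buys: the paper's argument is shorter, outsourcing the crux to Cisinski's theorem together with the structure of the localising set; yours is more self-contained within the paper's own toolkit, since the siftedness fact is exactly the one the paper invokes later (in the proof of Proposition~\ref{st:each G-Bun is pb bun}), and it yields as a byproduct the statement that $\rmS_e$ preserves finite products --- a property that is in any case required for $\rmS_e$ to play the role of the cohesion functor $\Pi$ in Theorem~\ref{st:S_e and cohesion} and that underlies all the applications in Section~\ref{sec:PBuns and Grp Exts in oo-topoi}. Your remark that siftedness, rather than adjointness of $\rmS_e$, is the genuine content is exactly right; in the paper's route the corresponding role is played by the product-stability of the collapse morphisms.
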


\begin{proof}
By~\cite[Prop.~2.13]{Bun:Sm_Spaces}, the localisation $L_I \bH_\infty$ agrees with the localisation $L_W \bH_\infty$ of $\bH_\infty$ at the set $W$ of all collapse morphisms $c \to *$, for $c \in \Cart$.
The set $W$ is stable under finite products in $\bH_\infty$, since $\Cart$ has finite products.
Therefore, the first claim follows from~\cite[Cor.~7.1.16]{Cisinski:HCats_HoAlg}.
The second claim then follows since a morphism in $\bH_\infty$ is in $W_I$ precisely if its image under $\Loc$ is an equivalence~\cite[Prop.~5.5.4.15]{Lurie:HTT}.
\end{proof}

\begin{proposition}
For $B, C \in \bH_\infty$, let $C^B \in \bH_\infty$ denote their internal hom object in $\bH_\infty$.
The localisation functor $\Loc \colon \bH_\infty \to L_I \bH_\infty$ is given (up to equivalence) by
\begin{equation}
	\Loc \simeq \underset{\bbDelta^\opp}{\colim}^{\bH_\infty} \big( (-)^{\Delta_e} \big)\,.
\end{equation}
\end{proposition}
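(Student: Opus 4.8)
Write $F \coloneqq \colim^{\bH_\infty}_{\bbDelta^\opp}\big((-)^{\Delta_e}\big)$, so that $F(X) = |X^{\Delta_e^\bullet}|$ is the geometric realisation of the simplicial object $[k] \mapsto X^{\Delta_e^k}$ obtained by applying the internal hom to the cosimplicial cartesian space $\Delta_e \colon \bbDelta \to \Cart \hookrightarrow \bH_\infty$. Since $\Delta_e^0 \cong \RN^0 = *$ gives $X^{\Delta_e^0} \simeq X$, the unique maps $\Delta_e^k \to \Delta_e^0$ assemble into a map of simplicial objects from the constant object on $X$ to $X^{\Delta_e^\bullet}$; passing to realisations yields a natural transformation $\mu \colon \id_{\bH_\infty} \Rightarrow F$, whose component $\mu_X \colon X \to F(X)$ is the $0$-th coprojection. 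Because $L_I \bH_\infty \subset \bH_\infty$ is a reflective localisation (Theorem~\ref{st:S_e Thm}(2)), it suffices to prove two things: (i) $F(X)$ is $I$-local for every $X$, and (ii) $\mu_X$ is an $I$-local equivalence. Granting these, $\mu_X$ exhibits $F(X)$ as the localisation of $X$, whence $F \simeq \iota \circ \Loc$, which is the assertion once $L_I \bH_\infty$ is identified with the full subcategory of $I$-local objects via $\iota$.

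The plan is to reduce both (i) and (ii) to the following \emph{Key Lemma}: for every cartesian space $c$, the canonical map $\alpha_X \colon X \to X^{\ul c}$ induced by $\ul c \to *$ is an $I$-local equivalence. For (ii), recall that $\Loc$ is a left adjoint (Theorem~\ref{st:S_e Thm}) and hence commutes with geometric realisations; the Key Lemma makes each component $\Loc X \to \Loc(X^{\Delta_e^k})$ of $\Loc(\mathrm{const}_X \to X^{\Delta_e^\bullet})$ an equivalence, so $\Loc F(X) \simeq |\mathrm{const}_{\Loc X}| \simeq \Loc X$ (as $\bbDelta^\opp$ is weakly contractible), compatibly with $\mu_X$; thus $\Loc(\mu_X)$ is an equivalence, i.e.\ $\mu_X \in W_I$. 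For (i), I argue pointwise: evaluation at an object preserves colimits, and the internal-hom adjunction together with the Yoneda lemma give $X^{\Delta_e^k}(c) \simeq X(c \times \Delta_e^k) \simeq X^{\ul c}(\Delta_e^k)$, so $F(X)(c) \simeq \rmS_e(X^{\ul c})$ and $F(X)(*) \simeq \rmS_e(X)$, with the structure map $F(X)(*) \to F(X)(c)$ equal to $\rmS_e(\alpha_X)$. Since $\rmS_e$ preserves $I$-local equivalences (Theorem~\ref{st:S_e Thm}(1)), the Key Lemma forces this to be an equivalence for all $c$, which is exactly $I$-locality of $F(X)$.

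It remains to establish the Key Lemma. By induction on $n$, using $\ul{\RN^n} \simeq \ul\RN \times \ul{\RN^{n-1}}$ and $X^{\ul\RN \times \ul{\RN^{n-1}}} \simeq (X^{\ul{\RN^{n-1}}})^{\ul\RN}$, one reduces to the case $c = \RN$. Here $\alpha_X$ is a section of evaluation $\ev_0 \colon X^{\ul\RN} \to X$ at $0 \in \RN$ (the composite $\ev_0 \circ \alpha_X$ is induced by $* \xrightarrow{0} \RN \to *$, hence is the identity), so by two-out-of-three it is enough to show $\ev_0 \in W_I$. This is where the genuine content lies, and it cannot be obtained formally from product-preservation (Proposition~\ref{st:Loc pres fin prods}) alone, because $(-)^{\ul\RN}$ is a right adjoint and right adjoints need not preserve weak equivalences.

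To close this gap I would invoke the homotopy-invariance of the $I$-localisation established in~\cite{Bun:Sm_Spaces}: the scaling contraction $s \colon \RN \times \RN \to \RN$, $s(t,x) = tx$, of the convex space $\RN$ onto $0$ furnishes a $W_I$-homotopy between $\id_{X^{\ul\RN}}$ and $\alpha_X \circ \ev_0$, parametrised by the very coordinate direction $\ul\RN$ that $\Loc$ collapses; together with $\ev_0 \circ \alpha_X = \id_X$ this exhibits $\ev_0$ and $\alpha_X$ as mutually $W_I$-inverse, so both lie in $W_I$. Equivalently, this is the statement that $L_I \bH_\infty$ is an exponential ideal in $\bH_\infty$, the monoidal refinement of Proposition~\ref{st:Loc pres fin prods}, which is precisely the kind of input the model-categorical results of~\cite{Bun:Sm_Spaces} (compare also~\cite{Cisinski:HCats_HoAlg}) are designed to supply. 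Controlling this interaction between the internal hom $(-)^{\Delta_e^k}$ and the localisation is the step I expect to require the most care.
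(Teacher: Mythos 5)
Your proof is correct, but it takes a genuinely different route from the paper's. The paper argues entirely at the level of functors: it combines parts~(3) and~(4) of Theorem~\ref{st:S_e Thm} with the equivalence $\tilde{\sfc} : \bS \rightleftarrows L_I \bH_\infty : \ev_*$ and the identification $\ev_* \simeq \rmS_e^I$ from~\cite{Bun:Sm_Spaces} to obtain $\Loc \simeq \tilde{\sfc} \circ \rmS_e$, then computes pointwise that $\rmS_e \simeq \ev_* \circ F$ for $F = \colim^{\bH_\infty}_{\bbDelta^\opp}\big((-)^{\Delta_e}\big)$, and finally \emph{imports} from \cite[Prop.~6.2]{Bun:Sm_Spaces} the fact that $F$ takes values in $I$-local objects, so that $\tilde{\sfc} \circ \ev_* \circ F \simeq F$. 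You instead verify the universal property of the reflective localisation directly: you construct the unit $\mu \colon \id \Rightarrow F$ and check (i) locality of $F(X)$ and (ii) $\mu_X \in W_I$, isolating the geometric content in your Key Lemma that $X \to X^{\ul{c}}$ is an $I$-local equivalence, proved by the scaling contraction of $\RN$ and induction on dimension. Notably, your step~(i) is precisely the statement the paper delegates to \cite[Prop.~6.2]{Bun:Sm_Spaces}, so your argument is more self-contained---it needs only Theorem~\ref{st:S_e Thm}(1)--(2) and Proposition~\ref{st:Loc pres fin prods}, not the equivalence $\bS \simeq L_I \bH_\infty$ or the identification $\rmL_e^I \simeq \tilde{\sfc}$---at the cost of length, whereas the paper's route is shorter but leans on the model-categorical results of~\cite{Bun:Sm_Spaces}. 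One small correction to your closing hedge: the final step is less delicate than you fear, and your phrase that it ``cannot be obtained formally from product-preservation alone'' undersells your own argument. Once you have written down the explicit map $X^s \colon X^{\ul{\RN}} \to \big(X^{\ul{\RN}}\big)^{\ul{\RN}}$ induced by $s(t,x) = tx$, transposing it to $\ul{\RN} \times X^{\ul{\RN}} \to X^{\ul{\RN}}$ and using that $\Loc$ preserves finite products (Proposition~\ref{st:Loc pres fin prods}) and inverts $\ul{\RN} \to *$ already forces $\Loc(\alpha_X \circ \ev_0) \simeq \id$, since the two endpoint inclusions $* \to \ul{\RN}$ become equivalent after localisation; combined with $\ev_0 \circ \alpha_X = \id$ this closes the Key Lemma with no exponential-ideal input beyond what the paper has established. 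The genuine geometric content is only the convexity of cartesian spaces, which supplies the contraction $s$; you have correctly identified that this is where the non-formal input lives, since $(-)^{\ul{\RN}}$, being right-adjoint in nature, does not preserve $W_I$ for formal reasons.
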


\begin{proof}
By Theorem~\ref{st:S_e Thm}(4), there is a canonical equivalence $S_e^I \circ \Loc \simeq \rmS_e$.
Combining this with Theorem~\ref{st:S_e Thm}(3), we obtain canonical equivalences
\begin{equation}
	\Loc
	\simeq \rmL_e^I \circ \rmS_e^I \circ \Loc
	\simeq \rmL_e^I \circ \rmS_e\,.
\end{equation}
Consider the adjunction $\tilde{\sfc} : \bS \rightleftarrows \bH_\infty : \ev_*$, where $\tilde{\sfc}$ assigns to a space $K$ the constant presheaf with value $K$, and where $\ev_*$ evaluates a presheaf on the final object $* \in \Cart$.
These functors induce an equivalence $\tilde{\sfc} : \bS \rightleftarrows L_I \bH_\infty : \ev_*$~\cite[Thm.~2.17]{Bun:Sm_Spaces}, and there is a canonical equivalence $\ev_* \simeq \rmS_e^I$ of functors $L_I \bH_\infty \to \bS$ by~\cite[Prop.~2.7, Cor.~3.15]{Bun:Sm_Spaces}.
By adjointness, we also obtain a canonical equivalence $\tilde{\sfc} \simeq \rmL_e^I$.
Consequently, there is a canonical equivalence
\begin{equation}
	\Loc
	\simeq \tilde{\sfc} \circ \rmS_e\,.
\end{equation}
We observe that there exists a canonical equivalence
\begin{equation}
	\rmS_e = \underset{\bbDelta^\opp}{\colim}^\bS \big( \Delta_e^*(-) \big)
	\simeq \ev_* \circ \,\underset{\bbDelta^\opp}{\colim}^{\bH_\infty} \big( (-)^{\Delta_e} \big)\,.
\end{equation}
By~\cite[Prop.~6.2]{Bun:Sm_Spaces}, we have that $\colim_{\bbDelta^\opp}^{\bH_\infty} ((-)^{\Delta_e})$ is a functor $\bH_\infty \to L_I \bH_\infty$; that is, it takes values in $I$-local objects.
It follows that there are canonical equivalences
\begin{equation}
	\Loc \simeq \tilde{\sfc} \circ \rmS_e
	\simeq \tilde{\sfc} \circ \ev_* \circ\, \underset{\bbDelta^\opp}{\colim}^{\bH_\infty} \big( (-)^{\Delta_e} \big)
	\simeq \underset{\bbDelta^\opp}{\colim}^{\bH_\infty} \big( (-)^{\Delta_e} \big)\,.
\end{equation}
This completes the proof.
\end{proof}

\subsection{Background on $\infty$-topoi}
\label{sec:oo-topoi}

In this section, we briefly recall some background on $\infty$-topoi.
Most of the material in this section can be found in~\cite{Lurie:HTT,Schreiber:DCCT,NSS:oo-bundles}.
For $n \in \NN_0$ and a subset $S \subset [n]$, let $\Delta^S \subset \Delta^n$ be the full $\infty$-subcategory on the vertices that lie in $S$.
There is a canonical isomorphism $\Delta^S \cong \Delta^{|S|}$ as simplicial sets, where $|S|$ is the cardinality of $S$.
The simplicial set $\Delta^S$ can equivalently be seen as the image of  an inclusion $\Delta^{|S|} \hookrightarrow \Delta^n$ that sends the $i$-th vertex of $\Delta^{|S|}$ to the vertex of $\Delta^n$ which corresponds to the $i$-th element of $S$ (with the order induced from the inclusion $S \subset [n]$).
Given an $\infty$-category $\scC$ and a simplicial object $X \in \Fun (\bbDelta^\opp, \scC)$, we set $X(S) \coloneqq X(\Delta^{|S|})$.
This comes with a canonical morphism $X_n \to X(S)$, induced by the inclusion $S \subset [n]$.

\begin{definition}
\label{def:Gpd objs}
Let $\scC$ be an $\infty$-category.
A \emph{groupoid object} in $\scC$ is a simplicial object $X \in \Fun(\bbDelta^\opp, \scC)$ such that, for every $n \in \NN_0$ and every partition $[n] = S \cup S'$ (as finite sets) with $S \cap S' \cong \{*\}$ consisting of a single element, the diagram
\begin{equation}
\begin{tikzcd}
	X_n \ar[r] \ar[d]
	& X(S') \ar[d]
	\\
	X(S) \ar[r]
	& X_0
\end{tikzcd}
\end{equation}
is a pullback diagram in $\scC$.
\end{definition}

In particular, any groupoid object is a category object (see also Definition~\ref{def:category object}):
for every $n \geq 1$, the spine decomposition $[n] = [1] \sqcup_{[0]} \cdots \sqcup_{[0]} [1]$ induces a canonical equivalence
\begin{equation}
	X_n \simeq \underbrace{X_1 \times_{X_0} \cdots \times_{X_0} X_1}_{n+1 \text{ factors}}\,.
\end{equation}
We denote the full subcategory of $\Fun(\bbDelta^\opp, \scC)$ on the groupoid objects by
\begin{equation}
	\Gpd(\scC) \subset \Fun(\bbDelta^\opp, \scC)\,.
\end{equation}
Let $\bbDelta_+$ denote the simplex category with an initial object $[-1]$ adjoined.
For $n \in \NN_0$, let $\bbDelta_{+, \leq n} \subset \bbDelta_+$ be the full subcategory on the objects $[-1], \ldots, [n]$.
In particular, $\bbDelta_{+, \leq 0}^\opp$ is the category with two objects and one non-trivial morphism $[0] \to [-1]$.
Therefore, any morphism $p \colon P \to B$ in an $\infty$-category $\scC$ defines an object $\{p\} \in \Fun(\bbDelta_{+, \leq 0}^\opp, \scC)$.

\begin{definition}
\label{def:Cech nerve}
Given a morphism $p \colon P \to B$ in an $\infty$-category $\scC$, its \emph{\v{C}ech nerve $\cC p$} (if it exists) is the augmented simplicial object obtained as the right Kan extension
\begin{equation}
\begin{tikzcd}[column sep=1.25cm, row sep=1cm]
	\bbDelta_{+, \leq 0}^\opp \ar[r, "\{p\}"] \ar[d, hookrightarrow, "\imath"'] & \scC
	\\
	\bbDelta_+^\opp \ar[ur, dashed, "\cC p"']
\end{tikzcd}
\end{equation}
That is, $\cC p = \Ran_\imath \{p\}$, where $\imath$ is the inclusion $\bbDelta_{+, \leq 0}^\opp \hookrightarrow \bbDelta_+^\opp$.
\end{definition}

For later use, we record:

\begin{proposition}
\label{st:gpd obs via X_1 X_1 X_0}
\emph{\cite[Prop.~6.1.2.11]{Lurie:HTT}}
Let $\scC$ be an $\infty$-category, and let $X \colon \bbDelta_+^\opp \to \scC$ be an augmented simplicial object.
The following are equivalent:
\begin{myenumerate}
\item $X$ is a right Kan extension of $X_{|\bbDelta_{+, \leq 0}^\opp}$.

\item The underlying simplicial object $X_{|\bbDelta^\opp}$ is a groupoid object in $\scC$ and the diagram
\begin{equation}
\begin{tikzcd}
	X_1 \ar[r, "d_0"] \ar[d, "d_1"']
	& X_0 \ar[d]
	\\
	X_0 \ar[r]
	& X_{-1}
\end{tikzcd}
\end{equation}
is a pullback square in $\scC$.
\end{myenumerate}
\end{proposition}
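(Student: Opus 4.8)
The plan is to read condition~(1) as the assertion that $\widehat{X}$ is the \v{C}ech nerve of its own augmentation, and then to convert this into the pullback conditions of~(2) by unwinding the pointwise formula for right Kan extensions. Write $p \colon \widehat{X}_0 \to \widehat{X}_{-1}$ for the image of the unique non-trivial morphism of $\bbDelta_{+, \leq 0}^\opp$. By Definition~\ref{def:Cech nerve}, condition~(1) says precisely that the canonical comparison $\widehat{X} \to \cC p$ is an equivalence. By the pointwise formula for right Kan extensions~\cite{Lurie:HTT}, this holds iff for every $[n] \in \bbDelta_+^\opp$ the comparison map from $\widehat{X}_n$ to the limit of $\widehat{X}_{|\bbDelta_{+, \leq 0}^\opp}$ over the comma $\infty$-category $\big([n]\hspace{-0.04cm}\downarrow\hspace{-0.04cm}\imath\big)$ is an equivalence. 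First I would compute this comma category explicitly: its objects are the $n+1$ vertex maps $[n] \to [0]$ together with the single augmentation $[n] \to [-1]$, and its only non-trivial morphisms send each vertex object to the augmentation object. Thus the indexing diagram is a ``wide cospan'', its limit is an iterated fibre product, and condition~(1) becomes the condition that for every $n \geq 0$ the map
\[
	\widehat{X}_n \longrightarrow \widehat{X}_0 \times_{\widehat{X}_{-1}} \cdots \times_{\widehat{X}_{-1}} \widehat{X}_0 \qquad (n+1 \text{ factors})
\]
is an equivalence. Call this condition $(\ast)$.

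Next I would prove $(\ast) \Leftrightarrow (2)$. For $(\ast) \Rightarrow (2)$, the case $n = 1$ of $(\ast)$ is exactly the pullback square appearing in~(2). For the groupoid condition of Definition~\ref{def:Gpd objs}, I fix a partition $[n] = S \cup S'$ with $S \cap S' = \{*\}$ and apply $(\ast)$ to $S$, $S'$, and $[n]$, identifying $\widehat{X}(S)$, $\widehat{X}(S')$, and $\widehat{X}_n$ with iterated fibre products over $\widehat{X}_{-1}$; under these identifications the maps to $\widehat{X}_0$ are the projections onto the factor indexed by the common vertex $*$. The pasting lemma for pullbacks then yields $\widehat{X}(S) \times_{\widehat{X}_0} \widehat{X}(S') \simeq \widehat{X}_n$, which is the desired groupoid square.

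For the converse $(2) \Rightarrow (\ast)$, I would first use the groupoid condition to obtain the spine decomposition $\widehat{X}_n \simeq \widehat{X}_1 \times_{\widehat{X}_0} \cdots \times_{\widehat{X}_0} \widehat{X}_1$ (with $n$ factors), by induction on $n$ using the partition $[n] = \{0, \ldots, n-1\} \cup \{n-1, n\}$, which gives $\widehat{X}_n \simeq \widehat{X}_{n-1} \times_{\widehat{X}_0} \widehat{X}_1$ at each step. Then I substitute the pullback square of~(2), namely $\widehat{X}_1 \simeq \widehat{X}_0 \times_{\widehat{X}_{-1}} \widehat{X}_0$, into each factor, and collapse the resulting tower of pullbacks over $\widehat{X}_0$ and over $\widehat{X}_{-1}$ using the pasting lemma once more, arriving at $(\ast)$.

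I expect the main obstacle to be the bookkeeping rather than any conceptual difficulty: one must match the face maps of $\widehat{X}$ to the correct projection legs of the iterated fibre products so that the pasting-lemma manipulations are genuinely canonical, and one must verify that the comma-category limit is the naive iterated fibre product, i.e.\ that the indexing diagram carries no further morphisms beyond the $n+1$ cospan legs. Each individual step is routine, but in the $\infty$-categorical setting every ``equality'' must be promoted to a \emph{coherent} equivalence, which is the delicate point. Existence of the limits involved is not an issue: in the direction $(2) \Rightarrow (\ast)$ the relevant pullbacks are supplied by hypothesis, and in the direction $(\ast) \Rightarrow (2)$ condition~$(\ast)$ exhibits them, so the Kan extension in question exists precisely when needed.
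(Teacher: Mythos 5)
Your proposal is correct, but note that there is nothing internal to compare it against: the paper does not prove this proposition, it imports it verbatim from Lurie (HTT, Prop.~6.1.2.11), so the relevant comparison is with the cited proof, and your plan is essentially that standard argument. Your two flagged ``delicate points'' do resolve as you expect. First, the comma category \smash{$([n] \downarrow \imath)$} carries no morphisms beyond the $n+1$ cospan legs: a morphism between two distinct vertex objects would be a map $[0] \to [0]$ in $\bbDelta_+$ over $[n]$, hence an identity, and there are no maps $[0] \to [-1]$ in $\bbDelta_+$, so the indexing shape is exactly your wide cospan, its limit is the $(n+1)$-fold fibre power of $\widehat{X}_0$ over $\widehat{X}_{-1}$, and the cases $n \in \{-1,0\}$ of the pointwise criterion are automatic because $\bbDelta_{+,\leq 0}^\opp \subset \bbDelta_+^\opp$ is full. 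Second, the canonicity issue is handled by observing that in both directions your equivalences are assembled from restriction maps of $\widehat{X}$ itself, so their components against the limit projections are the vertex maps, i.e.\ they agree with the canonical comparison map; the gluing steps are instances of decomposition of limits over a pushout of index shapes (equivalently, iterated pasting of pullback squares), which also takes care of existence, exactly as you say. One observation worth recording: your direction $(2) \Rightarrow (\ast)$ uses only the spine (Segal) consequence of the groupoid hypothesis together with the level-one pullback, so your argument in fact proves the slightly stronger statement that a \emph{category} object with $\widehat{X}_1 \simeq \widehat{X}_0 \times_{\widehat{X}_{-1}} \widehat{X}_0$ via $(d_1,d_0)$ already satisfies~(1), and hence is automatically a groupoid object by the other implication; this is consistent with the proposition (the Set-level intuition being that such a category is fibrewise indiscrete), not a gap.
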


\begin{definition}
Let $\scC$ be an $\infty$-category, and let $p \colon P \to B$ be a morphism in $\scC$.
Then, $p$ is an \emph{effective epimorphism} if the augmented \v{C}ech nerve $\cC p \in \Fun(\bbDelta_+^\opp, \scC) \cong \Fun((\bbDelta^\opp)^\triangleright, \scC)$ is a colimiting cocone in $\scC$.
In other words, the morphism $p \colon P \to B$ is an effective epimorphism precisely if the colimit $|\cC p|$ exists in $\scC$ and the induced morphism $|\cC p| \to B$ is an equivalence.
\end{definition}

Let $X \colon \bbDelta_+^\opp \to \scC$ be an augmented simplicial object in an $\infty$-category $\scC$.
We denote the morphism \smash{$X_0 \to X_{-1}$} by $p$.
Suppose that its \v{C}ech nerve $\cC p$ exists.
Observe that \smash{$\{p\} = \imath^*X$} as objects in $\Fun(\Delta^1, \scC) \cong \Fun(\bbDelta_{+, \leq 0}^\opp, \scC)$.
By the adjointness property of the right Kan extension, there is a canonical equivalence of mapping spaces
\begin{equation}
	\ul{\Fun}(\bbDelta_{+, \leq 0}^\opp,\scC) (\imath^* X, \{p\})
	\simeq \ul{\Fun}(\bbDelta_+^\opp,\scC) (X, \cC p)\,.
\end{equation}
The identity $\imath^*X = \{p\}$ thus induces a canonical morphism
\begin{equation}
\label{eq:comp mp gpd v Cech nerve}
	\eta \colon X \longrightarrow \cC p\,.
\end{equation}

We define $\infty$-topoi in terms of the Giraud-Lurie-Rezk axioms~\cite[Def.~6.1.0.4, Thm.~6.1.0.6]{Lurie:HTT}:

\begin{definition}
\label{def:oo-topos}
An \emph{$\infty$-topos} is an $\infty$-category $\bH$ satisfying the following axioms:
\begin{myenumerate}
\item $\bH$ is presentable.
In particular, $\bH$ has all limits and colimits~\cite[Def.~5.5.0.1, Cor.~5.5.2.4]{Lurie:HTT}.
We denote its initial object by $\emptyset \in \bH$ and its final object by $* \in \bH$.

\item Colimits in $\bH$ are universal:
for any diagram $D \colon K \to \bH$, any cocone $\overline{D} \colon K^\triangleright \to \bH$ under $D$ with apex $C \in \bH$, and for any morphism $f \colon B \to C$ in $\bH$, the induced morphism
\begin{equation}
	\underset{K}{\colim}^\bH(D \underset{\sfc C}{\times} \sfc B)
	\longrightarrow \big( \underset{K}{\colim}^\bH D \big) \underset{C}{\times} B
\end{equation}
is an equivalence (on the left-hand side, $\sfc B, \sfc C \colon K \to \bH$ are the constant diagrams with values $B$ and $C$, respectively, and the pullback is formed in $\Fun(K, \bH)$).

\item Coproducts in $\bH$ are disjoint:
for every pair of objects $B, C \in \bH$, the pushout diagram
\begin{equation}
\begin{tikzcd}
	\emptyset \ar[r] \ar[d] & B \ar[d]
	\\
	C \ar[r] & B \sqcup C
\end{tikzcd}
\end{equation}
is also a pullback diagram.

\item Groupoids in $\bH$ are effective:
given any groupoid object $X \in \Gpd(\bH)$, let $p \colon X_0 \to |X|$ denote the canonical morphism which is part of the colimiting cocone.
Then, the comparison morphism \smash{$\eta \colon X \to \cC p$} constructed in~\eqref{eq:comp mp gpd v Cech nerve} is an equivalence of simplicial objects in $\bH$.
In particular, $p$ is an effective epimorphism.
\end{myenumerate}
\end{definition}

\begin{example}
We list some examples of $\infty$-topoi; we will mostly be using the first two of these.
\begin{myenumerate}
\item The $\infty$-category of spaces $\bS$ is an $\infty$-topos.

\item Any $\infty$-category $\scP(\scC)$ of presheaves of spaces on a (small) $\infty$-category $\scC$ is an $\infty$-topos.

\item Any accessible, left-exact, reflective localisation of an $\infty$-category $\scP(\scC)$ of presheaves on a small $\infty$-category $\scC$ is an $\infty$-topos; in fact, every $\infty$-topos is equivalent to an $\infty$-topos of this form~\cite[Thm.~6.1.0.6, Prop.~6.1.5.3]{Lurie:HTT}.
\qen
\end{myenumerate}
\end{example}

We will later need the following properties of effective epimorphisms in an $\infty$-topos:

\begin{lemma}
\label{st:EEpis stable under pb and po}
In an $\infty$-topos $\bH$, effective epimorphisms are stable under pullbacks and colimits.
\end{lemma}

\begin{proof}
The fact that effective epimorphisms are stable under pullback is~\cite[Prop.~6.2.3.15]{Lurie:HTT}.
The effective epimorphisms in $\bH$ are precisely the $(-1)$-connected%
\footnote{Note that there is a shift in convention between~\cite{Lurie:HTT} and the nLab:
A morphism $f$ in $\bH$ is $n$-connect\emph{ive} in~\cite{Lurie:HTT} if and only if it is $(n{-}1)$-connect\emph{ed} in the conventions used on the nLab.
We follow the nLab here.}
morphisms~\cite[Def.~6.5.1.10]{Lurie:HTT}.
The class of $n$-connected and $n$-truncated morphisms in an $\infty$-topos form a factorisation system~\cite[Rmk.~5.2.8.16]{Lurie:HTT}, and the left class of morphisms in a factorisation system is stable under colimits~\cite[Prop.~5.2.8.6(7)]{Lurie:HTT}.
\end{proof}

An important notion of morphism between $\infty$-topoi is that of a geometric morphism, which is more adapted to the additional structure on $\infty$-topoi than a mere functor of $\infty$-categories:

\begin{definition}
Let $\bH$ and $\bH'$ be $\infty$-topoi.
A \emph{geometric morphism} of $\infty$-topoi from $\bH$ to $\bH'$ is a functor $\rmF_* \colon \bH \to \bH'$ admitting a left exact left adjoint $\rmF^* \colon \bH' \to \bH$.
\end{definition}

One can show that the $\infty$-category $\bS$ of spaces is final in the $\infty$-category of $\infty$-topoi and geometric morphisms~\cite[Prop.~6.3.4.1]{Lurie:HTT}.
That is, for every $\infty$-topos $\bH$ there exists an essentially unique geometric morphism $\bH \to \bS$.
We will denote the corresponding adjunction by $\wc : \bS \rightleftarrows \bH : \Gamma$ and refer to $\Gamma$ as the \emph{global-section functor} of $\bH$.

\begin{example}
Consider a Grothendieck $\infty$-site, i.e.~a small $\infty$-category $\scC$ with a Grothendieck coverage.
Suppose $\scC$ additionally has a final object.
If $\bH$ is the $\infty$-category of sheaves of spaces on $\scC$, then the global section functor $\Gamma$ of $\bH$ agrees with the evaluation of sheaves at the final object of $\scC$.
In particular, this applies to $\bH_\infty$, the $\infty$-topos of presheaves of spaces on $\Cart$ from Section~\ref{sec:Psh(Cart)}.
\qen
\end{example}

\begin{definition}
\label{def:cohesive oo-topos}
An $\infty$-topos $\bH$ is called \emph{cohesive} if the adjunction $\wc : \bH \rightleftarrows \bS : \Gamma$ can be extended to a triple adjunction $\Pi \dashv \wc \dashv \Gamma \dashv codisc$, in which the left adjoint $\Pi$ preserves finite products and the right adjoint $codisc$ is fully faithful.
\end{definition}

Cohesive $\infty$-topoi have been studied extensively in~\cite{Schreiber:DCCT} and related works.

\begin{theorem}
\label{st:S_e and cohesion}
The $\infty$-topos $\bH_\infty$ is cohesive, i.e.~there exists a triple adjunction $\Pi \dashv \wc \dashv \Gamma \dashv codisc$ as in Definition~\ref{def:cohesive oo-topos}, and there is a canonical equivalence
\begin{equation}
	\Pi \simeq \rmS_e\,.
\end{equation}
\end{theorem}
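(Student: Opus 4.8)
My plan is to first produce the full triple adjunction by abstract nonsense, then verify the two defining conditions of cohesion while simultaneously identifying $\Pi$ with $\rmS_e$. The middle adjunction $\wc \dashv \Gamma$ is the terminal geometric morphism of $\bH_\infty = \Fun(\Cart^\opp,\bS)$: the constant-diagram functor $\wc$ preserves all limits and colimits and is accessible, so by the adjoint functor theorem it has both a left adjoint $\Pi$ (necessarily $\colim_{\Cart^\opp}$) and a right adjoint $\Gamma$. Since the terminal object $* = \RN^0$ of $\Cart$ is initial in $\Cart^\opp$, this right adjoint is $\Gamma \simeq \lim_{\Cart^\opp} \simeq \ev_*$, recovering the global-section functor. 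As $\ev_*$ is computed pointwise it again preserves colimits and is accessible, so it has a right adjoint $codisc$. This already yields the chain $\Pi \dashv \wc \dashv \Gamma \dashv codisc$; what remains is to show that $codisc$ is fully faithful, that $\Pi$ preserves finite products, and that $\Pi \simeq \rmS_e$.

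Full faithfulness of $codisc$ is formal. Writing $j \colon \{*\} \hookrightarrow \Cart^\opp$ for the inclusion of the terminal object, one has $\Gamma \simeq j^*$ and hence $codisc \simeq j_* = \Ran_j$. As $j$ is fully faithful, $j^* j_* \simeq \id_\bS$, i.e.~the counit $\Gamma \circ codisc \to \id_\bS$ is an equivalence; this is exactly the statement that the right adjoint $codisc$ is fully faithful. (Concretely, $codisc(K)$ is the codiscrete presheaf $c \mapsto K^{\Map_\Cart(*,c)}$, whose value at $*$ is $K^{\pt} \simeq K$.)

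The heart of the matter is the identification $\Pi \simeq \rmS_e$, which I would obtain by showing directly that $\rmS_e$ is a left adjoint of $\wc$ and then invoking uniqueness of adjoints. By Theorem~\ref{st:S_e Thm}(4) there is a canonical equivalence $\rmS_e \simeq \rmS_e^I \circ \Loc$, and by Theorem~\ref{st:S_e Thm}(2) the localisation is reflective, so $\Loc \dashv \iota$. By Theorem~\ref{st:S_e Thm}(3) the functor $\rmS_e^I$ is an equivalence, so its right adjoint is its inverse $\rmR_e^I$, and the right adjoint of the composite $\rmS_e^I \circ \Loc$ is $\iota \circ \rmR_e^I$. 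Now a constant presheaf is $I$-local (the collapse maps $c \to *$ act on it by the identity), so $\wc$ factors as $\iota \circ \wc^I$ through $L_I\bH_\infty$. As established in Section~\ref{sec:Psh(Cart)}, $\rmS_e^I \simeq \ev_*$ on $L_I\bH_\infty$, and $\ev_* \circ \wc^I \simeq \id_\bS$; hence $\wc^I$ is a right inverse of the equivalence $\rmS_e^I$, so $\wc^I \simeq \rmR_e^I$. Therefore the right adjoint of $\rmS_e$ is $\iota \circ \rmR_e^I \simeq \iota \circ \wc^I = \wc$, that is $\rmS_e \dashv \wc$, whence $\Pi \simeq \rmS_e$.

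It then remains to see that $\Pi$ preserves finite products, and here the identification pays off: combining $\rmS_e \simeq \rmS_e^I \circ \Loc$ with $\rmS_e^I \simeq \ev_*$ gives $\Pi \simeq \ev_* \circ \iota \circ \Loc$. The functor $\Loc$ preserves finite products by Proposition~\ref{st:Loc pres fin prods}, the inclusion $\iota$ preserves them as a right adjoint, and $\ev_* = \Gamma$ preserves them as a right adjoint; so $\Pi$ preserves finite products and $\bH_\infty$ is cohesive. (Alternatively one could argue directly that $\rmS_e = \colim_{\bbDelta^\opp} \circ\, \Delta_e^*$ preserves finite products, since $\Delta_e^*$ is product-preserving and geometric realisation, being a sifted colimit, commutes with finite products in $\bS$.) I expect the main obstacle to lie in the third paragraph: matching the abstractly-defined left adjoint $\Pi$ of the constant-presheaf functor with the concretely-defined smooth singular complex $\rmS_e$, the crucial point being the identification of the inverse of the equivalence $\rmS_e^I$ with the corestricted constant-presheaf functor $\wc^I$, which draws on all four parts of Theorem~\ref{st:S_e Thm}.
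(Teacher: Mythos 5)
Your proof is correct, and it reaches the key identification by the same overall strategy as the paper---exhibit $\rmS_e$ as a left adjoint of $\wc$ and invoke uniqueness of adjoints---but through a different mechanism. The paper composes the adjunctions $\rmS_e \dashv \rmR_e$ and $\wc \dashv \Gamma$, computes $\rmS_e \circ \wc \simeq 1_\bS$ directly as the colimit of a constant $\bbDelta^\opp$-shaped diagram (via contractibility of $N\bbDelta$, Lemma~\ref{st:(co)lims of const diags} and Example~\ref{eg:contractible Cats for const diags}), passes to right adjoints to get $\Gamma \circ \rmR_e \simeq 1_\bS$, and then uses that $\rmR_e$ lands in the $I$-local objects---on which $\wc \circ \Gamma \to \id$ is an equivalence---to conclude $\rmR_e \simeq \wc$. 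You instead factor $\rmS_e \simeq \rmS_e^I \circ \Loc$ and identify the inverse $\rmR_e^I$ of the equivalence $\rmS_e^I \simeq \ev_*$ with the corestricted constant-presheaf functor $\wc^I$, so that the composite right adjoint $\iota \circ \rmR_e^I$ is $\wc$; the inputs are the same (all four parts of Theorem~\ref{st:S_e Thm} plus $I$-locality of constant presheaves), so this is a legitimate repackaging that moreover sidesteps the appendix lemma on colimits of constant diagrams. Where you genuinely go beyond the paper is in the cohesion axioms themselves: the paper cites \cite{Schreiber:DCCT} for the triple adjunction and its properties, whereas you verify full faithfulness of $codisc$ (as a right Kan extension along the fully faithful inclusion of the terminal object, so the counit $\Gamma \circ codisc \to \id_\bS$ is an equivalence) and product preservation of $\Pi$ (via $\Pi \simeq \ev_* \circ \iota \circ \Loc$ together with Proposition~\ref{st:Loc pres fin prods}, or alternatively via siftedness of $\bbDelta^\opp$). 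Both checks are correct, and your identification $\Gamma \simeq \ev_*$ from initiality of $*$ in $\Cart^\opp$ matches the paper's example; the result is a more self-contained proof than the one given in the text.
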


\begin{remark}
The fact that $\bH_\infty$ is cohesive is not new, see~\cite{Schreiber:DCCT}.
The second statement has been proven in a model categorical presentation in~\cite{Bun:Sm_Spaces}, and a different argument has been given in~\cite{BEBdBP:Class_sp_of_oo-sheaves}.
Here, we give an $\infty$-categorical proof of this fact for completeness.
\qen
\end{remark}

\begin{proof}
The $\infty$-topos $\bH_\infty = \scP(\Cart)$ admits a right-adjoint to its global-section functor $\Gamma$ by abstract arguments: evaluation of a presheaf at any object preserves colimits, and since both $\bH_\infty$ and $\bS$ are presentable, $\Gamma$ must admit a further right adjoint~\cite[Prop.~7.11.8]{Cisinski:HCats_HoAlg}.
It is well-known that this can in fact be extended into a triple adjunction which establishes that $\bH_\infty$ is cohesive~\cite{Schreiber:DCCT}.

For the second part of the statement, we show that $\rmS_e$ is left-adjoint to the functor $\wc$.
Recall from Section~\ref{sec:Psh(Cart)} that here $\wc$ simply sends a space $K \in \bS$ to the constant presheaf on $\Cart$ with value $K$.
Further, recall from the proof of Proposition~\ref{st:Loc pres fin prods} (and~\cite[Prop.~2.13]{Bun:Sm_Spaces}) that the $I$-local objects in $\bH_\infty$ are precisely the essentially constant presheaves, i.e.~those $F \in \bH_\infty$ for which the canonical morphism $F(*) \to F(c)$ is an equivalence for every $c \in \Cart$.
Equivalently, $F$ is $I$-local if and only if the canonical morphism $\wc \circ \Gamma (F) \to F$ is an equivalence in $\bH_\infty$.
Further, by Theorem~\ref{st:S_e Thm} the right adjoint $\rmR_e$ to $\rmS_e$ factors through the localisation $L_I \bH_\infty \subset \bH_\infty$; this is precisely the full $\infty$-subcategory of $\bH_\infty$ on the $I$-local objects.

Consider the two adjunctions $\rmS_e : \bH_\infty \rightleftarrows \bS : \rmR_e$ and $\wc : \bS \rightleftarrows \bH_\infty : \Gamma$.
They induce an adjunction
\begin{equation}
\begin{tikzcd}[column sep=1.25cm]
	\rmS_e \circ \wc : \bS \ar[r, shift left=0.14cm, "\perp"' yshift=0.05cm]
	& \bS : \Gamma \circ \rmR_e\,. \ar[l, shift left=0.14cm]
\end{tikzcd}
\end{equation}
By the definition~\eqref{eq:def S_e} of $\rmS_e$, for any space $K \in \bS$ we have a canonical natural equivalence
\begin{align}
	\rmS_e \circ \wc (K)
	&= \underset{\bbDelta^\opp}{\colim}^\bS \big( \wc(K)(\Delta_e) \big)
	\simeq K\,,
\end{align}
because left-hand side is the colimit of a constant diagram over an indexing category whose nerve is contractible in the Kan-Quillen model structure on $\sSet$ (see Lemma~\ref{st:(co)lims of const diags}, Example~\ref{eg:contractible Cats for const diags}).
In other words, there is a canonical natural equivalence $\rmS_e \circ \wc \simeq 1_\bS$.
Consequently, there is also a canonical equivalence on the right adjoints, $\Gamma \circ \rmR_e \simeq 1_\bS$.
We obtain natural equivalences
\begin{equation}
	\wc
	\simeq \wc \circ \Gamma \circ \rmR_e
	\simeq \rmR_e\,.
\end{equation}
In the second equivalence we have used that $\rmR_e$ takes values in $L_I \bH_\infty \subset \bH_\infty$ and that on objects in $L_I \bH_\infty$ the morphism $\wc \circ \Gamma \to 1_{\bH_\infty}$ is an equivalence.
From the equivalence $\rmR_e \simeq \wc$ and the adjunction $\rmS_e \dashv \rmR_e$ we infer that $\rmS_e$ is a further left adjoint to $\wc$.
Hence, it is equivalent to the functor $\Pi$.
\end{proof}

Theorem~\ref{st:S_e and cohesion} shows that the smooth singular complex functor $\rmS_e \colon \bH_\infty \to \bS$ has a deep homotopical meaning for assigning homotopy types to objects in $\bH_\infty$ and for studying these homotopy types.
It also provides an additional, refined, perspective on the good homotopical properties of the functor $\rmS_e$ that were found and studied in~\cite{Bun:Sm_Spaces}.
Finally, note that we also obtain from this a natural equivalence
\begin{equation}
	\rmS_e(F) = \underset{\bbDelta^\opp}{\colim}^\bS \big( F(\Delta_e) \big)
	\simeq \underset{\Cart^\opp}{\colim}^\bS(F)\,.
\end{equation}
That is, we see that $\rmS_e$ computes the $\infty$-categorical colimit of $\Cart^\opp$-shaped diagrams of spaces, and thus, by~\cite[Thm.~6.4.5]{Cisinski:HCats_HoAlg}, that $N \Delta_e \colon N \bbDelta \to N \Cart$ is a cofinal morphism in $\sSet$.

\section{Principal $\infty$-bundles and group extensions in $\infty$-topoi}
\label{sec:PBuns and Grp Exts in oo-topoi}

In this section, starting from the theory introduced in~\cite{NSS:oo-bundles}, we develop characterisations of principal $\infty$-bundles and extensions of group objects in $\infty$-topoi.
These characterisations are interesting already in their own right.
In Section~\ref{sec:HoThy Smooth String} they will also allow us to transfer the definition of string group extensions from $\bS$ to $\bH_\infty$ and to construct explicit smooth models for the string group.

\subsection{Groups and group extensions}
\label{sec:Grps and Grp-Exts}

Here we recall the definitions of group objects and their extensions in $\infty$-topoi~\cite{NSS:oo-bundles}.
We investigate how to compute limits of group and groupoid objects in $\infty$-topoi, and how group objects and their classifying objects behave under functors between $\infty$-topoi that preserve finite products and geometric realisations.

Let $\bH$ be an $\infty$-topos, and let $\Gpd(\bH)$ be the $\infty$-category of groupoid objects in $\bH$.
Further, let $\EEpi(\bH) \subset \Fun(\Delta^1, \bH)$ denote the full $\infty$-subcategory on the effective epimorphisms in $\bH$.
Recall that by Definition~\ref{def:oo-topos}(4) and Proposition~\ref{st:gpd obs via X_1 X_1 X_0} there is a canonical equivalence
\begin{equation}
\label{eq:Gpd(H) = EEpi(H)}
	\Gpd(\bH) \simeq \EEpi(\bH)\,,
\end{equation}
given by forming colimits and \v{C}ech nerves, respectively.

\begin{definition}
\label{def:Grp(H)}
Let $\scC$ be an $\infty$-category.
Let $\Grp(\scC) \subset \Gpd(\scC)$ denote the full $\infty$-subcategory on those groupoid objects $X$ in $\scC$ for which $X_0$ is a final object of $\scC$.
We call $\Grp(\scC)$ the $\infty$-category of \emph{group objects} in $\scC$.
\end{definition}

\begin{proposition}
\label{st:sH, Gpd(H), Grp(H) localisations}
For any $\infty$-topos $\bH$, there are reflective localisations
\begin{equation}
\begin{tikzcd}
	\Fun(\bbDelta^\opp, \bH) \ar[r, shift left=0.14cm, "\perp"' yshift=0.05cm]
	& \Gpd(\bH) \ar[l, hookrightarrow, shift left=0.14cm] \ar[r, shift left=0.14cm, "\perp"' yshift=0.05cm]
	& \Grp(\bH)\,. \ar[l, hookrightarrow, shift left=0.14cm]
\end{tikzcd}
\end{equation}
\end{proposition}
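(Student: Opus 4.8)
The plan is to realise each of the two fully faithful inclusions as the inclusion of the subcategory of $S$-local objects for a suitable small set $S$ of morphisms, and then to conclude via the standard fact that localising a presentable $\infty$-category at a small set of morphisms yields a reflective (and accessible) localisation, \cite[Prop.~5.5.4.15]{Lurie:HTT}. The ambient $\infty$-category $\Fun(\bbDelta^\opp, \bH)$ is presentable, being a diagram category valued in the presentable $\infty$-category $\bH$; and once the first localisation is in place, $\Gpd(\bH)$ is itself presentable as an accessible localisation of a presentable $\infty$-category, so the same argument applies verbatim to the second inclusion. Throughout, I fix a small set $\scG$ of objects of $\bH$ that generates $\bH$ under colimits (available since $\bH$ is presentable); its role is that a map in $\bH$ is an equivalence, resp.\ an object of $\bH$ is terminal, as soon as the mapping spaces $\Map_\bH(A,-)$ witness this for every $A \in \scG$.

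For the inclusion $\Gpd(\bH) \hookrightarrow \Fun(\bbDelta^\opp, \bH)$, I would first observe that each evaluation $\ev_n \colon \Fun(\bbDelta^\opp, \bH) \to \bH$ preserves colimits and hence admits a left adjoint $F_n$, characterised by the natural equivalence $\Map(F_n(A), X) \simeq \Map_\bH(A, X_n)$ (here $F_n(A)$ is the copower of $A$ with the representable $\Delta^n$). For a partition $[n] = S \cup S'$ with $S \cap S' = \{*\}$ as in Definition~\ref{def:Gpd objs}, I form the pushout $F_{S,S'}(A) \coloneqq F_S(A) \sqcup_{F_0(A)} F_{S'}(A)$ and the comparison morphism $\phi_{n,S,S'}(A) \colon F_{S,S'}(A) \to F_n(A)$ induced by the simplicial-set inclusion $\Delta^S \cup_{\Delta^0} \Delta^{S'} \hookrightarrow \Delta^n$. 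Since $\Map(-, X)$ turns this pushout into a pullback, one obtains $\Map(F_{S,S'}(A), X) \simeq \Map_\bH\big(A, X(S) \times_{X_0} X(S')\big)$, so that $X$ is $\phi_{n,S,S'}(A)$-local exactly when $\Map_\bH(A,-)$ sends the canonical map $X_n \to X(S)\times_{X_0}X(S')$ to an equivalence. Letting $A$ range over $\scG$ and $(n,S,S')$ over all such partitions produces a small set of morphisms whose local objects are, by the generation property of $\scG$, precisely the simplicial objects satisfying all the pullback conditions of Definition~\ref{def:Gpd objs} --- that is, exactly $\Gpd(\bH)$.

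For the inclusion $\Grp(\bH) \hookrightarrow \Gpd(\bH)$, I would argue analogously inside the presentable $\infty$-category $\Gpd(\bH)$. By Definition~\ref{def:Grp(H)}, a groupoid object $X$ lies in $\Grp(\bH)$ precisely when $X_0$ is terminal, i.e.\ when $\Map_\bH(A, X_0) \simeq *$ for every $A$. For $A \in \scG$ the constant simplicial object $F_0(A)$ is a groupoid object (its defining squares are pullbacks along identity maps) with $\Map(F_0(A), X) \simeq \Map_\bH(A, X_0)$, while mapping out of the initial object $\emptyset$ of $\Gpd(\bH)$ is contractible. Localising $\Gpd(\bH)$ at the small set $\{\emptyset \to F_0(A) \mid A \in \scG\}$ therefore selects exactly those groupoid objects with $\Map_\bH(A, X_0) \simeq *$ for all $A \in \scG$, hence --- again using the generation property of $\scG$ --- those with $X_0$ terminal, which is $\Grp(\bH)$. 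Once more \cite[Prop.~5.5.4.15]{Lurie:HTT} furnishes the reflective localisation.

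The one genuinely technical point, and where I would take the most care, is the translation of the objectwise pullback and terminality conditions of Definitions~\ref{def:Gpd objs} and~\ref{def:Grp(H)} into locality conditions: concretely, the adjunction identity $\Map(F_n(A), X) \simeq \Map_\bH(A, X_n)$, the fact that $\Map(-,X)$ carries the pushouts $F_{S,S'}(A)$ to the pullbacks $X(S)\times_{X_0}X(S')$, and the reduction from testing against all $A \in \bH$ to testing against the small generating set $\scG$. Conceptually, both $\Gpd(\bH)$ and $\Grp(\bH)$ are closed under all limits in $\Fun(\bbDelta^\opp, \bH)$ (limits commute with the defining pullbacks, and with terminality), which is the structural reason a left adjoint to each inclusion must exist; the explicit generating sets merely certify that these limit-closed subcategories are accessible, so that \cite[Prop.~5.5.4.15]{Lurie:HTT} applies.
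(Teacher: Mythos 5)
Your argument is correct, and it takes a genuinely different route from the paper. The paper obtains the first reflection simply by citing \cite[Prop.~6.1.2.9]{Lurie:HTT} (groupoid objects form a strongly reflective subcategory of simplicial objects in a presentable $\infty$-category), and for the second it uses the $\infty$-topos structure in an essential way: via the Giraud-axiom equivalence $\Gpd(\bH) \simeq \EEpi(\bH)$ it transports the inclusion $\Grp(\bH) \hookrightarrow \Gpd(\bH)$ to $\EEpi_*(\bH) \hookrightarrow \EEpi(\bH)$ and writes down the left adjoint explicitly as $(X_0 \to X_{-1}) \mapsto \big( * \to X_{-1} \sqcup_{X_0} * \big)$, with Lemma~\ref{st:EEpis stable under pb and po} (stability of effective epimorphisms under pushout) guaranteeing this lands in $\EEpi_*(\bH)$; this is Lurie's mathoverflow argument. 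You instead exhibit both subcategories uniformly as the $S$-local objects for small sets of morphisms manufactured from a colimit-generating set and the copower left adjoints $F_n \dashv \ev_n$, and invoke \cite[Prop.~5.5.4.15]{Lurie:HTT} twice. Your computations check out: $\Fun(\bbDelta^\opp,\bH)$ is presentable, $F_0(A)$ is indeed the constant simplicial object and a groupoid object, $\Map(F_{S,S'}(A),-)$ computes $\Map_\bH\big(A, X(S) \times_{X_0} X(S')\big)$ since mapping spaces turn the defining pushout into a pullback, and joint conservativity/detection of terminal objects by a generating set is standard. The trade-off: your proof is more general and self-contained --- it works in any presentable $\infty$-category, whereas the paper's second step genuinely uses effectivity of groupoids and pushout-stability of effective epimorphisms, which are topos axioms --- and your first step essentially re-proves the cited HTT~6.1.2.9 in this setting; the paper's proof, by contrast, is shorter and yields an explicit formula for the group-reflection (collapse the object of objects to a point and take the resulting effective epimorphism), which is concretely useful, e.g.\ for identifying the reflection as a quotient.
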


\begin{proof}
First, the right adjoints in the above sequence of adjunctions are fully faithful by definition.
The first morphism has a left adjoint by~\cite[Prop.~6.1.2.9]{Lurie:HTT}.
For the second left adjoint, we use the equivalence~\eqref{eq:Gpd(H) = EEpi(H)}%
\footnote{This proof goes back to a mathoverflow answer by Jacob Lurie, see~\url{https://mathoverflow.net/questions/140639/is-the-category-of-group-objects-in-an-infty-1-topos-reflective-as-a-subcat/140742\#140742}.}:
this equivalence induces a commutative square
\begin{equation}
\begin{tikzcd}
	\Gpd(\bH) \ar[d, "\simeq"']
	& \Grp(\bH) \ar[l, hookrightarrow] \ar[d, "\simeq"]
	\\
	\EEpi(\bH)
	& \EEpi_*(\bH) \ar[l, hookrightarrow]
\end{tikzcd}
\end{equation}
where $\EEpi_*(\bH) \subset \EEpi(\bH)$ is the full $\infty$-subcategory on those effective epimorphisms $f \colon X_0 \to X_{-1}$ where $X_0$ is a final object.
A left adjoint to the bottom morphism is given by the functor that sends an effective epimorphism $f \colon X_0 \to X_{-1}$ to the morphism $g \colon * \to X_{-1} \sqcup_{X_0} *$ induced by the pushout.
Since $f$ is an effective epimorphism, Lemma~\ref{st:EEpis stable under pb and po} implies that so is $g$.
\end{proof}

For a group object $G \in \Grp(\bH)$ in an $\infty$-topos $\bH$, we set
\begin{equation}
	\rmB G \coloneqq \underset{\bbDelta^\opp}{\colim}^\bH\, G = |G| \in \bH\,.
\end{equation}
Note that in an $\infty$-topos $\bH$, for any groupoid object $X \in \Gpd(\bH)$ the map $X_0 \to \colim^\bH_{\bbDelta^\opp} X$ is an effective epimorphism.
Hence, given a group object $G$ in $\bH$, the morphism $* \simeq G_0 \to \rmB G$ is an effective epimorphism.
Moreover, the functor $\rmB$ is part of an equivalence~\cite[Lemma~7.2.2.11]{Lurie:HTT} (see also~\cite[Thm.~2.19]{NSS:oo-bundles})
\begin{equation}
\label{eq:Omega -| B adjoint equiv}
\begin{tikzcd}[column sep=1.25cm]
	\bH^{*/}_{\geq 1} \ar[r, shift left=0.14cm, "\perp"' yshift=0.05cm, "\Omega"]
	& \Grp(\bH)\,, \ar[l, shift left=0.14cm, "\rmB"]
\end{tikzcd}
\end{equation}
where \smash{$\bH^{*/}_{\geq 1}$} is the $\infty$-category of pointed, connected objects in $\bH$.
Note that, for each $T \in \bH_{\geq 1}^{*/}$, we view $\Omega T$ as a group object in $\bH$, i.e.~$\Omega T \in \Grp(\bH)$.
Its underlying object in $\bH$ is $(\Omega T)_1 \eqqcolon \Omega_1 T$.

Unravelling the definition, we obtain that a group object in an $\infty$-category $\scC$ with a final object $* \in \scC$ is equivalently a simplicial object $G$ in $\scC$ such that $G_0 \simeq *$ and, for any $[n] \in \bbDelta$ and any partition $[n] = S \cup S'$ as finite sets with $S \cap S' \cong \{*\}$ consisting of a single element, the diagram
\begin{equation}
\begin{tikzcd}
	G_n \ar[r] \ar[d] & G(S) \ar[d]
	\\
	G(S') \ar[r] & G_0 \simeq *
\end{tikzcd}
\end{equation}
is a pullback diagram in $\scC$.
That is, there is a canonical equivalence $G_n \eq G(S) \times G(S')$.
In particular, iterating this for the spine partition $[n] = [1] \sqcup_{[0]} \cdots \sqcup_{[0]} [1]$, we obtain a canonical equivalence
\begin{equation}
	G_n \eq G_1^n\,.
\end{equation}

\begin{proposition}
\label{st:preservation of Grp and B}
Let $\rmL \colon \bH \to \bH'$ be a functor between $\infty$-topoi.
\begin{myenumerate}
\item If $\rmL$ preserves finite products, then it preserves group objects.

\item If $\rmL$ additionally preserves geometric realisations, then, for any group object $G$ in $\bH$, there is a canonical equivalence
\begin{equation}
	\rmB (\rmL G) \simeq \rmL(\rmB G)\,.
\end{equation}
\end{myenumerate}
\end{proposition}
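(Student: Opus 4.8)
\emph{Proof strategy.} The plan is to exploit the fact, recorded just above the statement, that for a group object the defining squares of Definition~\ref{def:Gpd objs} are in fact \emph{product} squares: since $\wG_0 \simeq *$, each partition square collapses to the canonical equivalence $\wG_n \eq \wG(S) \times \wG(S')$. Thus being a group object is a condition phrased entirely in terms of finite products (together with $\wG_0 \simeq *$), and I expect product-preservation to be exactly the hypothesis needed for part~(1). Concretely, I would first note that $\rmL$, preserving finite products, preserves the empty product, so that $(\rmL\wG)_0 = \rmL(\wG_0) \simeq \rmL(*) \simeq *$. Then, for each $n$ and each partition $[n] = S \cup S'$ with $S \cap S' \cong \{*\}$, I would apply $\rmL$ to $\wG_n \eq \wG(S) \times \wG(S')$ and use product-preservation to obtain
\begin{equation}
	(\rmL\wG)_n = \rmL(\wG_n) \simeq \rmL\big( \wG(S) \times \wG(S') \big) \simeq \rmL(\wG(S)) \times \rmL(\wG(S')) = (\rmL\wG)(S) \times (\rmL\wG)(S')\,,
\end{equation}
where the outer identifications are just functoriality of $\rmL$. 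By naturality of the comparison maps this composite is the canonical morphism for $\rmL\wG$, so $\rmL\wG$ satisfies the group-object conditions and part~(1) follows.

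The main point to get right here---and the reason the hypothesis is product-preservation rather than full left-exactness---is precisely this reduction of the relevant pullbacks to products. A functor of $\infty$-topoi need not preserve arbitrary pullbacks; what rescues the argument is that for group objects the squares in question are pullbacks over the \emph{final} object, i.e.\ products, which $\rmL$ does preserve. I do not anticipate any deeper obstacle beyond making this reduction explicit and checking that the equivalences assemble into the canonical comparison.

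For part~(2), the plan is to unwind the definition $\rmB G = |\wG| = \underset{\bbDelta^\opp}{\colim}^\bH \wG$ and to use part~(1) to identify $\rmL\wG$ as a group object in $\bH'$ with $(\rmL\wG)_1 = \rmL G$, so that by definition $\rmB(\rmL G) = |\rmL\wG| = \underset{\bbDelta^\opp}{\colim}^{\bH'}(\rmL\wG)$. The additional hypothesis that $\rmL$ preserves geometric realisations of simplicial objects then yields the desired canonical equivalence
\begin{equation}
	\rmL(\rmB G) = \rmL\big( \underset{\bbDelta^\opp}{\colim}^\bH \wG \big) \simeq \underset{\bbDelta^\opp}{\colim}^{\bH'}(\rmL\wG) = \rmB(\rmL G)\,,
\end{equation}
the middle equivalence being the natural colimit comparison map. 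The only bookkeeping required is the identification of $\rmB(\rmL G)$ with the geometric realisation of the group object $\rmL\wG$ furnished by part~(1), together with the observation that the comparison map is canonical; there is no substantial difficulty.
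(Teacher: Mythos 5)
Your proposal is correct and takes essentially the same approach as the paper's proof: both reduce the group-object condition to finite products using $\wG_0 \simeq *$ and the canonical equivalences $\wG_n \eq \wG(S) \times \wG(S')$, then apply preservation of finite products (including the final object) levelwise, checking compatibility with the canonical comparison maps. Part~(2) is likewise handled identically, as an immediate consequence of $\rmB G = |\wG|$ together with the hypothesis that $\rmL$ preserves geometric realisations.
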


\begin{proof}
The first part of the Proposition is known~\cite{Lurie:HTT}; we include its proof only for completeness.
Any functor $F \colon \scC \to \scD$ between $\infty$-categories preserves simplicial objects, i.e.~it induces a functor $\Fun(\bbDelta^\opp, \scC) \longrightarrow \Fun(\bbDelta^\opp, \scD)$.
Suppose that $G \in \Fun(\bbDelta^\opp, \bH)$ is a group object in $\bH$.
Since $\rmL$ preserves finite products, it preserves final objects, so that $(\rmL G)_0 \simeq *$ is final in $\bH'$.
For $n \neq 0$ and any partition $[n] = S \cup S'$ with $S \cap S' \cong \{*\}$, we obtain a commutative diagram
\begin{equation}
\begin{tikzcd}[column sep=1.25cm, row sep=1cm]
	(\rmL G)_n = \rmL (G_n) \ar[r, "\simeq"] \ar[rd]
	& \rmL \big( G(S) \times G(S') \big) \ar[d, "\simeq"]
	\\
	& \rmL G(S) \times \rmL G(S')
\end{tikzcd}
\end{equation}
The top morphism is an equivalence since $G$ is a group object in $\bH$ and the vertical morphism is an equivalence since $\rmL$ preserves products.
This proves claim~(1).
Using that $\rmB G = \colim^\bH_{\bbDelta^\opp} G = |G|$, the second part is now immediate.
\end{proof}

\begin{remark}
We will prove a number of statements about functors as in Proposition~\ref{st:preservation of Grp and B}(2), i.e.~functors between $\infty$-topoi which preserve geometric realisations and finite products.
An important class of such functors is given by the additional left-adjoints of cohesive $\infty$-topoi---see Definition~\ref{def:cohesive oo-topos}.
In particular, the functor $\rmS_e \colon \bH_\infty \to \bS$ from Section~\ref{sec:Psh(Cart)} is of this type by Theorem~\ref{st:S_e and cohesion}.
\qen
\end{remark}

\begin{lemma}
\label{st:equivs of Gpd and Grp objects}
Let $\bH$ be an $\infty$-topos.
\begin{myenumerate}
\item A morphism $X \to Y$ in $\Gpd(\bH)$ is an equivalence if and only if $X_i \to Y_i$ is an equivalence in $\bH$ for $i = 0,1$.

\item A morphism $G \to H$ in $\Grp(\bH)$ is an equivalence if and only if $G_1 \to H_1$ is an equivalence in $\bH$.
\end{myenumerate}
\end{lemma}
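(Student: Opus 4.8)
The plan is to reduce both statements to the fact that, in a groupoid object, every $\widehat{X}_n$ is assembled functorially from $\widehat{X}_0$ and $\widehat{X}_1$ alone. First I would dispatch the two ``only if'' directions, which are immediate: a morphism of simplicial objects is an equivalence exactly when it is a levelwise equivalence, hence in particular an equivalence at levels $0$ and $1$. All the content therefore lies in the ``if'' directions.

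For part~(1), the key step is to establish, by induction on $n$, a canonical equivalence
\begin{equation}
	\widehat{X}_n \simeq \underbrace{\widehat{X}_1 \times_{\widehat{X}_0} \cdots \times_{\widehat{X}_0} \widehat{X}_1}_{n \text{ factors}}
\end{equation}
that is natural in $\widehat{X} \in \Gpd(\bH)$. The inductive step applies the groupoid condition of Definition~\ref{def:Gpd objs} to the partition $[n] = \{0, \ldots, n-1\} \cup \{n-1, n\}$, whose overlap is the single vertex $\{n-1\}$; the resulting pullback square reads $\widehat{X}_n \simeq \widehat{X}_{n-1} \times_{\widehat{X}_0} \widehat{X}_1$, and feeding in the inductive hypothesis gives the displayed formula. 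Every map appearing in these squares is induced by a morphism of $\bbDelta$, so the decomposition is natural in the simplicial object; consequently a morphism $\widehat{X} \to \widehat{Y}$ in $\Gpd(\bH)$ induces at level $n$ precisely the map of iterated pullbacks determined by $X_0 \to Y_0$ and $X_1 \to Y_1$. Since the formation of (iterated) pullbacks preserves equivalences, the assumption that $X_0 \to Y_0$ and $X_1 \to Y_1$ are equivalences yields that $\widehat{X}_n \to \widehat{Y}_n$ is an equivalence for every $n$, so $\widehat{X} \to \widehat{Y}$ is an equivalence of simplicial objects.

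Part~(2) I would then deduce as a corollary of~(1). For a group object $\widehat{G}_0 \simeq *$ is final, so both $\widehat{G}_0$ and $\widehat{H}_0$ are final and the induced morphism $\widehat{G}_0 \to \widehat{H}_0$ is automatically an equivalence; meanwhile $\widehat{G}_1 \to \widehat{H}_1$ is by definition the morphism $G \to H$, which is assumed to be an equivalence. Thus the hypotheses of part~(1) are met, and we conclude that $\widehat{G} \to \widehat{H}$ is an equivalence.

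The only genuinely delicate point is the naturality of the iterated-pullback decomposition, which is exactly what forces the level-$n$ map to be determined by the level-$0$ and level-$1$ maps; I expect this to be the step requiring the most care. It is controlled by the observation that the whole decomposition is built from structure maps coming from $\bbDelta$. As an alternative, in an $\infty$-topos one could invoke the effectiveness of groupoids (Definition~\ref{def:oo-topos}(4)) to present $\widehat{X}$ as the \v{C}ech nerve of $\widehat{X}_0 \to |\widehat{X}|$ and then appeal to the formula recorded just after Proposition~\ref{st:gpd obs via X_1 X_1 X_0}; the direct inductive argument, however, uses only the defining pullback squares and avoids any appeal to effectiveness.
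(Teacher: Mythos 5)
Your proof is correct and takes essentially the same approach as the paper's: both arguments rest on the canonical spine decomposition $\widehat{X}_n \simeq \widehat{X}_1 \times_{\widehat{X}_0} \cdots \times_{\widehat{X}_0} \widehat{X}_1$, natural in $\widehat{X} \in \Gpd(\bH)$, together with the fact that equivalences in the full subcategories $\Gpd(\bH)$ and $\Grp(\bH)$ are detected levelwise in $\Fun(\bbDelta^\opp, \bH)$. The only cosmetic differences are that you spell out the induction behind the spine decomposition and deduce part (2) formally from part (1), whereas the paper simply notes that the same line of argument applies.
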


\begin{proof}
Proposition~\ref{st:sH, Gpd(H), Grp(H) localisations} implies that an equivalence of groupoid objects $X \eq Y$ in $\bH$ is the same as an objectwise equivalence of the underlying simplicial objects in $\bH$:
$X$ and $Y$ are local objects in $\Fun(\bbDelta^\opp, \bH)$ with respect to the localisation $\Gpd(\bH) \subset \Fun(\bbDelta^\opp, \bH)$, so that the local equivalences between them are precisely the original, i.e.~the levelwise, equivalences.
In particular, this implies the `only if' part of claim~(1).

Conversely, if we are given a morphism $X \eq Y$ of groupoid objects in $\bH$ such that $X_i \to Y_i$ is an equivalence for $i = 0,1$, then it follows that $X \eq Y$ is a levelwise equivalence of simplicial objects; this is because for each $n \in \NN_0$ there is a canonical equivalence \smash{$X_n \simeq X_1 \times _{X_0} \cdots \times_{X_0} X_1$}, natural in $X \in \Gpd(\bH)$.
It then follows that the morphism $X \to Y$ is also an equivalence in $\Gpd(\bH)$.

The same line of argument shows the second claim.
\end{proof}

\begin{lemma}
\label{st:lims in Gpd and Grp objects}
Let $\bH$ be an $\infty$-topos, and let $K \in \sSet$ be a simplicial set.
\begin{myenumerate}
\item A diagram $X \colon K^\triangleleft \to \Gpd(\bH)$ of groupoid objects in $\bH$ is a limit diagram if and only if the composition $\iota X \colon K^\triangleleft \to \Gpd(\bH) \hookrightarrow \Fun(\bbDelta^\opp,\bH)$ is a limit diagram.

\item A diagram $X \colon K^\triangleleft \to \Gpd(\bH)$ of groupoid objects in $\bH$ is a limit diagram if and only if the induced diagrams $X_i \colon K^\triangleleft \to \bH$ are limit diagrams for $i = 0,1$.

\item A diagram $G \colon K^\triangleleft \to \Grp(\bH)$ of group objects in $\bH$ is a limit diagram if and only if the the composition $\jmath G \colon K^\triangleleft \to \Grp(\bH) \hookrightarrow \Gpd(\bH)$ is a limit diagram.

\item A diagram $G \colon K^\triangleleft \to \Grp(\bH)$ of group objects in $\bH$ is a limit diagram if and only if the induced diagram $G_1 \colon K^\triangleleft \to \bH$ is a limit diagram.
\end{myenumerate}
\end{lemma}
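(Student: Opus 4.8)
The plan is to prove all four parts by reducing to two standard facts: that a reflective inclusion detects limits (a cone is a limit cone precisely when its image is), and that limits in a functor $\infty$-category are computed pointwise. Parts~(1) and~(3) are then purely formal, while~(2) and~(4) are obtained by exploiting the explicit shape of groupoid and group objects. First I would record the general principle underlying~(1) and~(3): if $\iota \colon \scD \hookrightarrow \scC$ is fully faithful and admits a left adjoint, and $\scC$ has $K$-indexed limits, then a cone $\widehat{X} \colon K^\triangleleft \to \scD$ is a limit cone if and only if $\iota \widehat{X}$ is a limit cone in $\scC$. Indeed, $\iota$ preserves limits as a right adjoint, which gives the ``only if'' direction; conversely $\scD$ is closed under limits in $\scC$, so the genuine limit cone in $\scD$ maps under $\iota$ to a limit cone in $\scC$, and uniqueness of limits in $\scC$ together with full faithfulness of $\iota$ forces any cone with limiting image to be itself limiting. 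Applying this to the two reflective localisations of Proposition~\ref{st:sH, Gpd(H), Grp(H) localisations} immediately yields~(1), for $\Gpd(\bH) \hookrightarrow \Fun(\bbDelta^\opp,\bH)$, and~(3), for $\Grp(\bH) \hookrightarrow \Gpd(\bH)$.

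For~(2), I would combine~(1) with the fact that limits in $\Fun(\bbDelta^\opp,\bH)$ are detected objectwise: the composite $\iota\widehat{X}$ is a limit diagram if and only if $\widehat{X}_n \colon K^\triangleleft \to \bH$ is a limit diagram for every $n$. The forward implication then follows by specialising to $n = 0,1$. For the converse I would invoke the natural equivalences $\widehat{X}_n \simeq \widehat{X}_1 \times_{\widehat{X}_0} \cdots \times_{\widehat{X}_0} \widehat{X}_1$ recorded in the proof of Lemma~\ref{st:equivs of Gpd and Grp objects}, which hold naturally in $\widehat{X} \in \Gpd(\bH)$ and hence assemble into an equivalence of diagrams over $K^\triangleleft$. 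Since $\widehat{X}_0$ and $\widehat{X}_1$ are assumed to be limit diagrams and iterated pullbacks are finite limits, the interchange of limits shows $\widehat{X}_n$ is a limit diagram for every $n$: the apex value $\widehat{X}_1 \times_{\widehat{X}_0}\cdots\times_{\widehat{X}_0}\widehat{X}_1$ computed at the cone point agrees with $(\lim_K \widehat{X}_1)\times_{\lim_K \widehat{X}_0}\cdots\times_{\lim_K\widehat{X}_0}(\lim_K\widehat{X}_1) \simeq \lim_K \widehat{X}_n$.

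Finally, part~(4) follows by combining~(3) and~(2): a diagram in $\Grp(\bH)$ is a limit diagram if and only if it is so in $\Gpd(\bH)$, if and only if the level-$0$ and level-$1$ diagrams are limit diagrams in $\bH$. For a diagram of group objects the level-$0$ diagram $\widehat{G}_0$ is equivalent to the constant diagram at the final object $* \in \bH$, and a constant diagram at $*$ is automatically a limit diagram, since $\lim_K \mathrm{const}_* \simeq *$. Thus the only remaining condition is that $\widehat{G}_1 = G$ be a limit diagram, which is precisely the assertion of~(4).

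The only step with genuine content is the converse direction of~(2): the reduction from all levels $n$ to levels $0$ and $1$ rests on the naturality of the spine decomposition over the indexing diagram together with the commutation of limits with the finite pullback limits. Everything else is formal---reflective inclusions detecting limits, pointwise limits in functor categories, and the final object as the limit of constant diagrams---so I expect this spine-and-interchange argument to be the main, and only mild, obstacle.
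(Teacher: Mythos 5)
Your proposal is correct and follows essentially the same architecture as the paper's proof: the substantive step---the converse of~(2) via the spine equivalences $\widehat{X}_n \simeq \widehat{X}_1 \times_{\widehat{X}_0} \cdots \times_{\widehat{X}_0} \widehat{X}_1$, natural over $K^\triangleleft$, combined with interchange of limits---is exactly the paper's argument, as is deducing~(4) from~(2) and~(3) using that the level-zero diagram is constant at the final object. The only divergence is in~(1) and~(3): you invoke the general principle that a fully faithful right adjoint preserves and reflects limit cones (with reflectivity supplied by Proposition~\ref{st:sH, Gpd(H), Grp(H) localisations}), whereas the paper establishes the needed closure of $\Gpd(\bH) \subset \Fun(\bbDelta^\opp,\bH)$, respectively of $\Grp(\bH) \subset \Gpd(\bH)$, under limits by an explicit verification of the groupoid (Segal) condition on the limit object; your route is slightly slicker and the paper's is self-contained, but they prove the same intermediate claim and both are sound.
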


\begin{proof}
One can see the `only if' direction of claims~(1) and~(2) as follows:
we first note that since the inclusion $\Gpd(\bH) \subset \Fun(\bbDelta^\opp, \bH)$ is a right adjoint, we have that if $X \colon K^\triangleleft \to \Gpd(\bH)$ is a limit diagram, then so is $X \colon K^\triangleleft \to \Fun(\bbDelta^\opp, \bH)$.
Further, since limits in diagram $\infty$-categories are computed pointwise, this is equivalent to the functor $X_i \colon K^\triangleleft \to \bH$ being a limit diagram in $\bH$ for every $[i] \in \bbDelta$.

For the converse direction in claim~(1), we first show that limits of diagrams in $\Gpd(\bH)$ can be computed in $\Fun(\bbDelta^\opp, \bH)$.
More precisely, a functor $X \colon K^\triangleleft \to \Gpd(\bH)$ is a limit diagram whenever its composition with the inclusion $\iota \colon \Gpd(\bH) \hookrightarrow \Fun(\bbDelta^\opp, \bH)$ is so, i.e.~the inclusion reflects limits.
Equivalently, the $\infty$-subcategory $\Gpd(\bH) \hookrightarrow \Fun(\bbDelta^\opp, \bH)$ is closed under limits in $\Fun(\bbDelta^\opp, \bH)$.
This is seen as follows:
consider a functor $X \colon K^\triangleleft \to \Gpd(\bH)$ and a decomposition $[n] = S \cup S'$ with $S \cap S' = \{*\}$.
This induces an equivalence
\begin{equation}
	X_n \eq X(S) \underset{X_0}{\times} X(S')
\end{equation}
in $\Fun(K^\triangleleft, \bH)$.
Setting $Y \coloneqq \lim_K^{\Fun(\bbDelta^\opp,\bH)}(\iota X)$ and using that limits commute with limits~\cite[Lemma~5.5.2.3]{Lurie:HTT}, we have
\begin{align}
\label{eq:Gpd closed under lims}
	Y_n
	&\simeq \big( \underset{K}{\lim}^{\Fun(\bbDelta^\opp, \bH)}(\iota X) \big)_n
	\\*
	&\simeq \underset{K}{\lim}^{\bH}(\iota X_n)
	\\
	&\simeq \underset{K}{\lim}^{\bH} \big( \iota X(S) \underset{\iota X_0}{\times} \iota X(S') \big)
	\\
	&\simeq \big( \underset{K}{\lim}^{\bH} \iota X(S) \big) \underset{(\underset{K}{\lim}^{\bH} \iota X_0)}{\times} \big( \underset{K}{\lim}^{\bH} \iota X(S') \big)
	\\*
	&\simeq Y(S) \times_{Y_0} Y(S')\,,
\end{align}
which shows that $Y \in \Fun(\bbDelta^\opp, \bH)$ is local with respect to the localisation $\Gpd(\bH) \hookrightarrow \Fun(\bbDelta^\opp,\bH)$, i.e.~that $Y \in \Gpd(\bH)$.
Since the inclusion $\iota \colon \Gpd(\bH) \hookrightarrow \Fun(\bbDelta^\opp, \bH)$ is fully faithful, $Y$ is also a limit of the diagram $X \colon K \to \Gpd(\bH)$.
Consequently, if the composition $\iota X \colon K^\triangleleft \to \Gpd(\bH) \hookrightarrow \Fun(\bbDelta^\opp, \bH)$ is a limit diagram, then so is $X \colon K^\triangleleft \to \Gpd(\bH)$.

For the converse direction in claim~(2), suppose that $X \colon K^\triangleleft \to \Gpd(\bH)$ is a diagram such that the functors $X_i \colon K^\triangleleft \to \bH$ are limit diagrams for $i = 0,1$.
By part~(1) it suffices to show that the composition $\iota X \colon K^\triangleleft \to \Fun(\bbDelta^\opp,\bH)$ is a limit diagram; that is, it suffices to show that $X_i \colon K^\triangleleft \to \bH$ is a limit diagram for every $[i] \in \bbDelta$.

Since $X \colon K^\triangleleft \to \Fun(\bbDelta^\opp,\bH)$ is valued in groupoid objects, and since limits in $\bH$ commute with limits, it follows from~\eqref{eq:Gpd closed under lims} that for every $[n] \in \bbDelta$ the diagram $X_n \colon K^\triangleleft \to \bH$ is equivalent to a limit diagram $X_1 \times _{X_0} \cdots \times_{X_0} X_1 \colon K^\triangleleft \to \bH$, and is hence a limit diagram itself.

The proof of claim~(3) proceeds along the exact same line as the proof of part~(2):
the key insight is the fact that if $G \colon K^\triangleleft \to \Grp(\bH)$ is a diagram such that the composition $\jmath G \colon K^\triangleleft \to \Gpd(\bH)$ is a limit diagram, then \smash{$\lim_K^{\Gpd(\bH)}(\jmath G)$} is still local with respect to the localisation $\Grp(\bH) \subset \Gpd(\bH)$.

Claim~(4) is then the combination of claims~(2) and~(3).
\end{proof}

Having established several properties of the $\infty$-category of group objects in $\bH$, we now define extensions of group objects:

\begin{definition}
\label{def:oo-grp extension}
\emph{\cite[Def.~4.26]{NSS:oo-bundles}}
Let $A$ and $H$ be group objects in an $\infty$-topos $\bH$.
An \emph{extension of group objects of $H$ by $A$} is a sequence $A \to G \to H$ in the $\infty$-category $\Grp(\bH)$ such that the sequence $\rmB A \to \rmB G \to \rmB H$ is a fibre sequence in $\bH$.
\end{definition}

\begin{remark}
This definition of a group extension has advantages from a theoretical perspective.
Nevertheless, it appears that there should be a simpler definition that more directly generalises extensions of groups in $\Set$ to the $\infty$-categorical setting.
For group objects in $\Set$, a group extension is a sequence $A \to G \to H$ of group homomorphisms such that $G \to H$ is surjective and $A$ is the fibre of the morphism $G \to H$ at the identity element of $H$.
We will prove in Theorem~\ref{st:oo-grp ext via pfbuns} that one can indeed generalise this view on group extensions to group objects in $\infty$-topoi.
\qen
\end{remark}

\subsection{Group actions in $\infty$-categories}
\label{sec:Grp actions}

We now investigate actions of group objects in $\infty$-topoi.
Let $\sSet_J$ denote the category of simplicial sets endowed with the Joyal model structure.
For $K \in \sSet$, we let $\obj(K)$ be the set $K_0$ of vertices of $K$, seen as a discrete simplicial set.
Let $J \coloneqq \Delta^1[f^{-1}]$ be the localisation of $\Delta^1$ at its non-trivial edge (see e.g.~\cite[Sec.~3.3]{Cisinski:HCats_HoAlg}).

\begin{lemma}
\label{st:equiv on objs pulls back functoriality}
Let $\scC$ be an $\infty$-category, and let $K$ be a simplicial set.
\begin{myenumerate}
\item The inclusion $\iota \colon \obj(K) \hookrightarrow K$ induces a morphism
\begin{equation}
	\iota^* \colon \scC^K = \Fun(K, \scC) \longrightarrow \Fun \big( \obj(K), \scC \big) = \scC^{\obj(K)}
\end{equation}
of simplicial sets, which is a fibration between fibrant objects in the Joyal model structure.

\item Consider either of the inclusions $\Delta^{\{i\}} \hookrightarrow J$, where $i = 0,1$.
The induced morphism
\begin{equation}
	\Fun(J,\scC^K) \longrightarrow \scC^K \underset{\scC^{\obj(K)}}{\times} \Fun \big(J,\scC^{\obj(K)} \big)
\end{equation}
is a trivial Kan fibration.

\item Let $g \colon K \to \scC$ and $g' \colon \obj(K) \to \scC$ be functors.
For any equivalence $\eta \colon \iota^* g \eq g'$, consider the space of pairs $(\hat{g}', \hat{\eta})$, where $\hat{g}'$ is a lift of $g'$ to a functor $\hat{g}' \colon K \to \scC$, and where $\hat{\eta}$ is an equivalence $g \eq \hat{g}'$ such that $\iota^*\hat{\eta} = \eta$.
This space is a contractible Kan complex.
\end{myenumerate}
\end{lemma}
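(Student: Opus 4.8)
The plan is to establish the three parts in order, obtaining~(2) from~(1) by a Leibniz cotensor (pullback--power) argument in the Joyal model structure, and then~(3) from~(2) by passing to a strict fibre. For part~(1), since $\scC$ is an $\infty$-category it is fibrant in the Joyal model structure, and for any simplicial set $L$ the functor simplicial set $\scC^L = \Fun(L,\scC)$ is again an $\infty$-category, hence fibrant; in particular $\scC^K$ and $\scC^{\obj(K)}$ are fibrant. The inclusion $\iota \colon \obj(K) \hookrightarrow K$ is a monomorphism, i.e.\ a cofibration of the Joyal model structure. Applying the cotensor property of categorical fibrations in the Joyal model structure~\cite{Lurie:HTT} to the cofibration $\iota$ and the categorical fibration $\scC \to *$ shows that $\iota^* \colon \scC^K \to \scC^{\obj(K)}$ is a categorical fibration, which is precisely a fibration between fibrant objects of the Joyal model structure.

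For part~(2), I would first identify the displayed map as the Leibniz cotensor $\langle \Delta^{\{i\}} \hookrightarrow J,\, \iota^* \rangle$ of the endpoint inclusion against the categorical fibration $\iota^*$ of part~(1): one has $(\scC^K)^{\Delta^{\{i\}}} \cong \scC^K$ and $(\scC^K)^J = \Fun(J,\scC^K)$, and likewise for $\scC^{\obj(K)}$, so the pullback--power map is exactly $\Fun(J,\scC^K) \to \scC^K \times_{\scC^{\obj(K)}} \Fun(J,\scC^{\obj(K)})$ (for either $i = 0,1$). To see it is a trivial Kan fibration it suffices to verify the right lifting property against every boundary inclusion $\partial\Delta^n \hookrightarrow \Delta^n$, and by the exponential adjunction this is equivalent to $\iota^*$ having the right lifting property against the pushout--product $(\partial\Delta^n \hookrightarrow \Delta^n) \,\hat\times\, (\Delta^{\{i\}} \hookrightarrow J)$. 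The key input is that, because $J = \Delta^1[f^{-1}]$ is the contractible interval of the Joyal model structure, each such pushout--product is a \emph{trivial} cofibration in the Joyal model structure~\cite{Cisinski:HCats_HoAlg}; since $\iota^*$ is a Joyal fibration by part~(1), it has the right lifting property against all Joyal trivial cofibrations, and hence against these. Thus the map is a trivial Kan fibration.

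For part~(3), take $i = 0$ and encode the equivalence $\eta \colon \iota^* g \eq g'$ as a map $\bar\eta \colon J \to \scC^{\obj(K)}$ with $\bar\eta|_{\Delta^{\{0\}}} = \iota^* g$ and $\bar\eta|_{\Delta^{\{1\}}} = g'$. A pair $(\hat g', \hat\eta)$ as in the statement is then the same datum as a single map $\bar{\hat\eta} \colon J \to \scC^K$ with $\bar{\hat\eta}|_{\Delta^{\{0\}}} = g$ and $\iota^* \circ \bar{\hat\eta} = \bar\eta$: indeed one recovers $\hat g' = \bar{\hat\eta}|_{\Delta^{\{1\}}}$, and the requirement $\iota^* \hat g' = g'$ is then automatic. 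Consequently the space of such pairs is precisely the strict fibre of the map of part~(2) over the vertex $(g, \bar\eta) \in \scC^K \times_{\scC^{\obj(K)}} \Fun(J,\scC^{\obj(K)})$. Since trivial Kan fibrations are stable under pullback, this fibre maps to $\Delta^0$ by a trivial Kan fibration and is therefore a contractible Kan complex, as claimed.

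Finally, the one genuinely nonformal point is the assertion in part~(2) that the pushout--products $(\partial\Delta^n \hookrightarrow \Delta^n) \,\hat\times\, (\Delta^{\{i\}} \hookrightarrow J)$ are Joyal trivial cofibrations---equivalently, that $J$ is a well-behaved interval for the Joyal model structure, so that the Leibniz cotensor of an endpoint inclusion against a categorical fibration is a trivial \emph{Kan} (and not merely categorical) fibration. This is exactly the place where one must circumvent the failure of the Joyal model structure to be a monoidal model category, using the special nature of the contractible interval $J$; it is the step I would pin down most carefully against the literature. Once it is in hand, part~(1) is a direct citation and part~(3) is a purely formal fibre computation.
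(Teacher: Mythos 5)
Your proof is correct and follows essentially the same route as the paper: part~(1) via the pushout-product/cotensor axiom of the Joyal model structure applied to the cofibration $\obj(K) \hookrightarrow K$ and the fibrant object $\scC$, part~(2) by exhibiting the map as the Leibniz cotensor of the categorical anodyne endpoint inclusion $\Delta^{\{i\}} \hookrightarrow J$ against the fibration from part~(1) (the paper simply cites~\cite[Cor.~3.6.4]{Cisinski:HCats_HoAlg} for exactly this conclusion, where you unwind the adjunction to pushout-products by hand), and part~(3) by identifying the space of pairs $(\hat{g}',\hat{\eta})$ with the strict fibre of the trivial Kan fibration of part~(2), just as the paper does. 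One correction to your closing remark: the Joyal model structure \emph{is} a (cartesian closed, symmetric) monoidal model category---this is precisely what the paper invokes in part~(1); what fails is that it is not a \emph{simplicial} model category, i.e.\ not enriched over the Kan--Quillen model structure---so the fact that the pushout-products of $\partial\Delta^n \hookrightarrow \Delta^n$ with $\Delta^{\{i\}} \hookrightarrow J$ are Joyal trivial cofibrations follows immediately from the pushout-product axiom together with $\Delta^{\{i\}} \hookrightarrow J$ being a weak categorical equivalence, and no special circumvention is needed.
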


\begin{proof}
Part~(1) follows since $\obj(K) \hookrightarrow K$ is a cofibration in the Joyal model category $\sSet_J$, $\scC$ is a fibrant object in $\sSet_J$, and $\sSet_J$ is a (closed) symmetric monoidal model category.

For part~(2), we apply~\cite[Cor.~3.6.4]{Cisinski:HCats_HoAlg} to the categorical anodyne extension $\Delta^{\{i\}} \hookrightarrow J = \Delta^1[f^{-1}]$ and the Joyal fibration (i.e.~isofibration) from part~(1).

Part~(3) is obtained by taking the fibre of the morphism from part~(2), which is a contractible Kan complex since it is the fibre of a trivial Kan fibration.
This fibre is equivalently described as the space of lifts in the commutative diagram
\begin{equation}
\begin{tikzcd}
	\Delta^{\{i\}} \ar[d, hookrightarrow] \ar[r, "g"]
	& \scC^K \ar[d, "\iota^*"]
	\\
	J \ar[r, "\eta"'] & \scC^{\obj(K)}
\end{tikzcd}
\end{equation}
which is precisely the space of pairs $(\hat{g}',\hat{\eta})$ of lifts $\hat{g}' \colon K \to \scC$ of $g'$ and equivalences $ \hat{\eta} \colon g \eq \hat{g}'$ such that $\iota^*\hat{\eta} = \eta$.
\end{proof}

\begin{example}
\label{eg:G//G and Dec^0}
Let $G$ be a group object in an $\infty$-category $\scC$ with a final object.
This is, in particular, a simplicial object $G \colon \bbDelta^\opp \to \scC$ (we suppress the canonical inclusion functors $\Grp(\scC) \hookrightarrow \Gpd(\scC) \hookrightarrow \Fun(\bbDelta^\opp, \scC)$).
Consider the functor
\begin{equation}
	[0] \star (-) \colon \bbDelta \longrightarrow \bbDelta\,,
	\qquad
	[n] \longmapsto [0] \star [n] \cong [n+1]\,,
\end{equation}
where $\star$ denotes the join of categories (and where we view partially ordered sets as categories).
The induced pullback functor
\begin{equation}
	\Dec^0 \coloneqq \big( [0] \star (-) \big)^* \colon \Fun(\bbDelta^\opp, \scC) \longrightarrow \Fun(\bbDelta^\opp, \scC)
\end{equation}
is also called the \emph{decalage} functor; see~\cite{Stevenson:Decalage_and_spl_loop_grps} for more background.
For each $n \geq 1$, the partition $[n] = \{0,1\} \sqcup_{\{1\}} \{1, \ldots, n\}$ induces an equivalence
\begin{equation}
\label{eq:gamma_n for G//G}
	\gamma_n \colon (\Dec^0 G)_n = G_{n+1}
	\simeq G_1 \times G_n\,.
\end{equation}
We can phrase this as an equivalence of functors $\Dec^0 G \simeq G_1 \times \iota^*G \colon \obj(\bbDelta^\opp) \to \scC$.
From Lemma~\ref{st:equiv on objs pulls back functoriality} we obtain that there exists an essentially unique way to lift these data to a functor $\bbDelta^\opp \to \scC$, which we denote by $G_1 \dslash G$, and an equivalence $\gamma \colon \Dec^0 G \eq G_1 \dslash G$ in $\Fun(\bbDelta^\opp, \scC)$, whose components are exactly the equivalences $\gamma_n$ from~\eqref{eq:gamma_n for G//G}.
One can now check that $G_1 \dslash G$ is the simplicial object in $\scC$ that describes the right action of $G$ on itself via the group multiplication in $G$.
\qen
\end{example}

\begin{definition}
\label{def:oo-group action}
Let $\scC$ be an $\infty$-category with pullbacks and a final object.
Let $G$ be a group object in $\scC$, and let $P \in \scC$.
An \emph{action of $G$ on $P$} is a simplicial object $P \dslash G \in \Fun(\bbDelta^\opp, \scC)$ such that
\begin{myenumerate}
\item for each $n \in \NN_0$, we have $(P \dslash G)_n = P \times G_1^n$,

\item the morphism $d_1 \colon P \times G_1 \to P$ is the canonical projection onto $P$, the morphism $s_0 \colon P \to P \times G_1$ agrees with the morphism \smash{$1_P \times (* \overset{s_0}{\to} G_1)$}, and

\item the collapse morphism $P \to *$ induces a morphism $P \dslash G \to G$ in $\Fun(\bbDelta^\opp, \scC)$.
\end{myenumerate}
\end{definition}

Given a group action $P \dslash G$, we set $a \coloneqq d_0 \colon P \times G_1 \to P$.
It follows by the pasting law for pullbacks that there are canonical equivalences of morphisms between $d_0 \colon P \times G_1^n \to P \times G_1^{n-1}$ and \smash{$a \times 1_{G_1^{n-1}} \colon P \times G_1^n \to P \times G_1^{n-1}$}, and similarly between $d_n \colon P \times G_1^n \to P \times G_1^{n-1}$ and the projection onto the first $n$ factors.

\begin{remark}
Definition~\ref{def:oo-group action} is taken from~\cite[Def.~3.1]{NSS:oo-bundles} almost verbatim, but it differs from that source in that we do not \emph{require} group actions to be groupoid objects.
Instead, we show in Theorem~\ref{st:grp actions are gpd obs} that this is a \emph{consequence} of the axioms in Definition~\ref{def:oo-group action}.
A second (minor) difference is that we also fix the level-zero degeneracy map $s_0 \colon P \to P \times G_1$.
\qen
\end{remark}

\begin{example}
For any group object $G \in \Grp(\scC)$ there is a canonical trivial action $* \dslash G$ on the final object $* \in \scC$, coming from the canonical equivalence $* \times G \simeq G$ of simplicial objects.
That is, there is a canonical equivalence $G \simeq * \dslash G$ in $\Fun(\bbDelta^\opp, \scC)$.
\qen
\end{example}

\begin{example}
We can now give a precise meaning to the last sentence of Example~\ref{eg:G//G and Dec^0}:
the object $G_1 \dslash G \in \Fun(\bbDelta^\opp, \scC)$ is an action of $G$ on its underlying object $G_1 \in \bH$ via right multiplication;
see also~\cite[Def.~4.2.2.2, Example~4.2.2.4]{Lurie:HA} for more background.
\qen
\end{example}

Given an action of a group object $G$ on an object $P$ in $\scC$, we would like to think of the simplicial object $P \dslash G$ as the \emph{action groupoid} associated with this action.
This is indeed justified:

\begin{theorem}
\label{st:grp actions are gpd obs}
Let $\scC$ be an $\infty$-category with finite limits, let $G \in \Grp(\scC)$ be a group object in $\scC$, and let $P \dslash G \in \Fun(\bbDelta^\opp,\scC)$ be an action of $G$ on an object $P \in \scC$.
Then, $P \dslash G$ is a groupoid object in $\scC$.
\end{theorem}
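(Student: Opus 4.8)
The plan is to verify the pullback condition of Definition~\ref{def:Gpd objs} directly, exploiting the explicit product description of the levels of $P \dslash G$ together with the pasting law for pullbacks, and to locate the single non-formal input in the invertibility built into $\wG$.

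First I would make all structure maps of $P \dslash G$ explicit. By Definition~\ref{def:oo-group action} we have $(P \dslash G)_n \simeq P \times G^n$, and the remark following it identifies the outer faces as $d_0 \simeq a \times 1_{G^{n-1}}$ and $d_n$ as the projection onto the first $n$ factors; the inner faces are induced by the multiplication of $\wG$ on adjacent $G$-factors and the degeneracies by the unit $* \to G$. To ensure these identifications hold coherently, as diagrams of the correct shape rather than merely levelwise, I would invoke the essential uniqueness of functorial lifts from Lemma~\ref{st:equiv on objs pulls back functoriality}, exactly as in the construction of $G \dslash G$ in Example~\ref{eg:G//G and Dec^0}.

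Next, for a partition $[n] = S \cup S'$ with $S \cap S' = \{j\}$, I would identify $(P \dslash G)(S)$ and $(P \dslash G)(S')$ with the corresponding products of $P$ and copies of $G$, and describe the comparison map $(P \dslash G)_n \to (P \dslash G)(S) \times_{(P \dslash G)_0} (P \dslash G)(S')$ as a map between such products assembled from the action $a$, the multiplication $m$, the unit, and projections. For the spine partitions $[n] = \{0, \dots, k\} \cup \{k, \dots, n\}$ this comparison map is an equivalence purely formally, from the product structure and the pasting law, using only that $d_1 \colon P \times G \to P$ is the projection and $d_0 = a$. For the remaining, groupoidal, partitions the comparison map unwinds into a composite of shear maps of the form $(g_1, g_2) \mapsto (g_1 g_2, g_1)$ on $G \times G$ and $(x, g) \mapsto (a(x,g), g)$ on $P \times G$. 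These are equivalences precisely because $\wG$ is a group, hence a groupoid object: the corresponding pullback squares for $\wG$ at level two identify the first shear as an equivalence $G^2 \eq G^2$, and the second then follows by the pasting law together with the action axioms. This is the step where the invertibility of $\wG$ enters, and it is exactly what separates a groupoid object from a mere category object.

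To handle the general case without an unwieldy enumeration of interleaved partitions, I would induct on $n$, factoring each partition square through squares whose comparison maps are the level-$\leq 2$ shear equivalences just discussed, repeatedly applying the pasting law and the spine equivalences already established. The main obstacle I anticipate is precisely this combinatorial bookkeeping: a partition $[n] = S \cup S'$ need not be of spine type, so $S$ and $S'$ may interleave within $[n]$, and one must track carefully which faces the inclusions $S, S' \hookrightarrow [n]$ induce and how the comparison map factors through shears. The second delicate point is maintaining $\infty$-categorical coherence throughout, rather than arguing only on objects up to equivalence; this is again controlled by the functoriality statement of Lemma~\ref{st:equiv on objs pulls back functoriality}.
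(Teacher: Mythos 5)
Your two-phase plan---spine (Segal) conditions holding formally, with the invertibility of $\wG$ entering only for the remaining partitions---is exactly the skeleton of the paper's argument: Proposition~\ref{st:monoid actions are Cat obs} proves by induction with the pasting law that any \emph{monoid} action is a category object, using only that $d_1 \colon P \times G \to P$ is the projection, that $d_0 = a$, and axiom~(3), just as you describe. The gap is in your final step. You propose to handle an arbitrary partition $[n] = S \cup S'$ by ``factoring each partition square through squares whose comparison maps are the level-$\leq 2$ shear equivalences'' and to control the interleaving by induction, but this reduction is precisely the nontrivial combinatorial content of the theorem, and you leave it unproven---indeed you flag it yourself as the main anticipated obstacle. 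An interleaved partition does not factor through spine data and level-2 shears in any evident way; making your induction precise essentially amounts to reproving Lurie's characterisation of groupoid objects among category objects. The paper sidesteps the enumeration entirely by \emph{citing} that characterisation: by Proposition~\ref{st:Grpd obs via horns} (from~\cite{Lurie:Goodwillie}), a category object $X$ is a groupoid object as soon as the single horn map $X_2 \to X(\Lambda^2_0)$ is an equivalence, which Lemma~\ref{st:gpd obs via pullback} converts into one pullback square involving only $d_1, d_2 \colon X_2 \to X_1$ and $d_1 \colon X_1 \to X_0$. Without either invoking such a criterion or actually carrying out your induction, the proof is incomplete at its crux.

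Two smaller points where the paper does work that you assert. First, your identification of the inner faces of $P \dslash G$ as ``induced by the multiplication of $\wG$ on adjacent $G$-factors'' is not part of Definition~\ref{def:oo-group action} and requires proof: the paper derives $d_1 \simeq 1_P \times d_1^G$ at level~2 from the simplicial identity $d_1 d_1 \simeq d_1 d_2$ together with axiom~(3), exhibiting $d_1$ as a morphism of binary products (diagram~\eqref{eq:diag d_1 = 1_P x d_1^G}); Lemma~\ref{st:equiv on objs pulls back functoriality}, which you cite, only supplies coherence for data already identified levelwise, not this identification itself. Second, after splitting off the factor $P$, one still needs that $P \times (-)$ carries the level-2 groupoid pullback square for $\wG$ to a pullback square; this is not automatic in an $\infty$-category and is supplied by Lemma~\ref{st:c is ff and Px(-) pres contr lims}, using that the span category indexing the pullback has contractible nerve. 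Your shear-map route would need analogous care at the corresponding points, so even granting your reduction, the sketch is missing these two verifications that the paper makes explicit.
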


\begin{remark}
Let $\rmL \colon \bH \to \bH'$ be a functor between $\infty$-topoi which preserves finite products and geometric realisations.
The relevance of Theorem~\ref{st:grp actions are gpd obs} is that it will allow us to show that functors of this type map group actions in $\bH$ to group actions in $\bH'$ (see Theorem~\ref{st:L pres (princ) actions}).
In~\cite{NSS:oo-bundles}, group actions are \emph{defined} to be groupoid objects, but functors $\rmL \colon \bH \to \bH'$ as above do not preserve groupoid objects in general.
However, Theorem~\ref{st:grp actions are gpd obs} shows that---as in the classical case of (set-theoretic) group actions---actions of group objects in $\infty$-topoi \emph{automatically} form groupoid objects.
Consequently, we do not need to require $\rmL$ to preserve groupoid objects.
\qen
\end{remark}

We prove Theorem~\ref{st:grp actions are gpd obs} in Appendix~\ref{app:Actions and CatObs}.
For the remainder of this section, let $\bH$ be an $\infty$-topos.

\begin{definition}
\label{def:G-action over X}
Let $G \in \Grp(\bH)$ be a group object.
A \emph{$G$-action over an object $B \in \bH$} is an augmented simplicial object $X \in \Fun(\bbDelta_+^\opp, \bH)$ whose underlying simplicial object is a $G$-action $P \dslash G$ on some object $P \in \bH$, and whose augmenting object is $B$, i.e.~$X_{-1} = B$.
Writing $p \colon P \to B$ for the morphism \smash{$X_{|\bbDelta_{+, \leq 0}^\opp}$}, we also denote a $G$-action over $B$ by
\begin{equation}
	P \dslash G \overset{p}{\longrightarrow} B
	\quad
	\in \Fun( \bbDelta_+^\opp, \bH)\,.
\end{equation}
A \emph{morphism of $G$-actions over $B \in \bH$},
\begin{equation}
	( P \dslash G \to B) \overset{f}{\longrightarrow} (Q \dslash G \to B)\,,
\end{equation}
is a morphism $f$ in $\Fun(\bbDelta_+^\opp, \bH)$ as above such that
\begin{myenumerate}
\item $f_{-1} = 1_B$ is the identity on $B$, and

\item the collapse morphisms $P \to *$ and $Q \to *$ induce a (weakly) commutative diagram
\begin{equation}
\begin{tikzcd}[column sep=0.5cm]
	P \dslash G \ar[rr, "f_{|\bbDelta^\opp}"] \ar[dr] & & Q \dslash G \ar[dl]
	\\
	& * \dslash G
\end{tikzcd}
\end{equation}
of simplicial objects in $\bH$.
\end{myenumerate}
The \emph{$\infty$-category of $G$-actions over $B \in \bH$} is the full $\infty$-subcategory of \smash{$\Fun(\bbDelta^\opp, \bH)_{/(B \times (* \dslash G))}$} on those objects whose underlying simplicial object is a $G$-action.
\end{definition}

An ordinary $G$-action is equivalent to a $G$-action over the final object $* \in \bH$.

\begin{example}
For a group object $G \in \Grp(\bH)$ and an action $P \dslash G$ of $G$ on an object $P \in \bH$, let $q \colon (\bbDelta^\opp)^\triangleright \to \bH$ be a colimiting cocone of the simplicial diagram $P \dslash G$ in $\bH$.
Observing that $(\bbDelta^\opp)^\triangleright \cong (\bbDelta^\triangleleft)^\opp \cong (\bbDelta_+)^\opp$, this defines an augmented simplicial object in $\bH$, which we denote as
\begin{equation}
	q \colon P \dslash G \longrightarrow \underset{\bbDelta^\opp}{\colim}^\bH (P \dslash G) = |P \dslash G|\,.
\end{equation}
Therefore, the data $P \dslash G \to |P \dslash G|$ form a $G$-action over $|P \dslash G|$.
In particular, the canonical morphism $* \dslash G \to \rmB G$ is of this form.
\qen
\end{example}

Another example of a morphism of this type is the collapse morphism $G_1 \dslash G \to *$, as we show now:

\begin{proposition}
\label{st:|G//G| = *}
If $G$ is a group object in $\bH$, then the canonical morphism
\begin{equation}
	|G_1 \dslash G| \eq *
\end{equation}
is an equivalence.
\end{proposition}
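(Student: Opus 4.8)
The plan is to reduce the statement to the standard contractibility of the décalage. By Example~\ref{eg:G//G and Dec^0} there is an equivalence $\gamma \colon \Dec^0 \widehat{G} \eq G \dslash G$ in $\Fun(\bbDelta^\opp, \bH)$, and since geometric realisation preserves equivalences of simplicial objects, it induces an equivalence $|\Dec^0 \widehat{G}| \eq |G \dslash G|$. As $*$ is a final object, the morphism in the statement is the unique map $|G \dslash G| \to *$, and it is an equivalence precisely if $|\Dec^0 \widehat{G}| \to *$ is one. Thus it suffices to prove that $|\Dec^0 \widehat{G}|$ is a final object.

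First I would equip the décalage with its canonical augmentation. The cone-point inclusion $[0] \hookrightarrow [0] \star [n] \cong [n+1]$, $0 \mapsto 0$, is natural in $[n] \in \bbDelta$ and hence defines a natural transformation from the constant functor at $[0]$ into $[0] \star (-) \colon \bbDelta \to \bbDelta$. Precomposing $\widehat{G}$ with this transformation yields an augmented simplicial object $\Dec^0 \widehat{G} \to \widehat{G}_0$ in $\bH$.

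Next I would show that this augmented simplicial object is \emph{split}: it admits an extra degeneracy, supplied precisely by the degeneracy operator $s_0$ of $\widehat{G}$ that the décalage discards (see~\cite{Stevenson:Decalage_and_spl_loop_grps}). By the colimit criterion for split augmented simplicial objects~\cite{Lurie:HTT}, the augmentation is then a colimit diagram, so that there is a canonical equivalence $|\Dec^0 \widehat{G}| \eq \widehat{G}_0$. Finally, since $\widehat{G}$ is a group object, $\widehat{G}_0 \simeq *$ by Definition~\ref{def:Grp(H)}. Composing the equivalences $|G \dslash G| \simeq |\Dec^0 \widehat{G}| \simeq \widehat{G}_0 \simeq *$ and noting that the result is the canonical collapse morphism completes the argument.

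The main obstacle is the middle step: producing the extra degeneracy coherently and concluding that the augmented décalage is a colimit diagram in the $\infty$-categorical setting, where one cannot simply manipulate a strict contracting homotopy. Concretely, one must assemble the discarded degeneracies into an extension of $\Dec^0 \widehat{G} \to \widehat{G}_0$ witnessing splitness, so that the colimit criterion applies; once this is done, the remaining identifications are purely formal.
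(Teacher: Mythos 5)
Your argument is correct, but it takes a genuinely different route from the paper's. The paper proves Proposition~\ref{st:|G//G| = *} by \emph{strictifying}: since $\bH$ is presentable, it is presented by a combinatorial simplicial model category $\scM$ with $\bH \simeq N(\scM^\circ)$ \cite[Prop.~A.3.7.6]{Lurie:HTT}, colimits of simplicial objects in $\bH$ correspond to homotopy colimits in $\scM$ \cite[Cor.~4.2.4.8]{Lurie:HTT}, and then the classical, strictly $1$-categorical fact that the decalage of a simplicial object carries an augmentation and extra degeneracies \cite{Riehl:Cat_HoThy,Stevenson:Decalage_and_spl_loop_grps} finishes the proof---the coherence issue you single out as ``the main obstacle'' is thereby dissolved, because in $\scM$ one \emph{can} manipulate a strict contracting homotopy. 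You instead stay entirely inside the $\infty$-category and invoke the splitness criterion \cite[Lem.~6.1.3.16]{Lurie:HTT}, which is a perfectly viable alternative; what it buys is model-independence (no choice of presentation $\scM$, no translation of colimits into homotopy colimits), at the price of having to produce the splitting coherently. That remaining step is not actually an obstacle, and you could close it cleanly: the functor $[0] \star (-) \colon \bbDelta \to \bbDelta$ extends canonically to a functor $\bbDelta_{-\infty} \to \bbDelta$ (in the notation of \cite[Sec.~6.1.3]{Lurie:HTT}, sending $[n]$ to $\{-\infty\} \cup [n] \cong [n+1]$), whose restriction to $\bbDelta_+ \subset \bbDelta_{-\infty}$ is your augmented cone construction. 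Precomposing $\widehat{G}$ with this functor therefore exhibits the augmented object $\Dec^0 \widehat{G} \to \widehat{G}_0$ as split \emph{by functoriality at the level of indexing categories}---the discarded $s_0$'s never have to be assembled by hand. With that remark your proof is complete: $|G \dslash G| \simeq |\Dec^0 \widehat{G}| \simeq \widehat{G}_0 \simeq *$, and both your argument and the paper's ultimately rest on the same classical input about decalage, packaged either $\infty$-categorically (you) or model-categorically (the paper).
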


\begin{proof}
Since $\bH$ is presentable, there exists a combinatorial simplicial model category $\scM$ and an equivalence of $\infty$-categories $\bH \simeq N_\Delta (\scM^\circ)$~\cite[Prop.~A.3.7.6]{Lurie:HTT}.
Under this equivalence, colimits in $\bH$ over diagrams indexed by ordinary categories correspond to homotopy colimits in $\scM$~\cite[Cor.~4.2.4.8]{Lurie:HTT}.
It now suffices to observe that any simplicial object in $\scM$ obtained as the decalage of another simplicial object has an augmentation and extra degeneracies~\cite{Riehl:Cat_HoThy,Stevenson:Decalage_and_spl_loop_grps}.
\end{proof}

Any morphism $A \to G$ of group objects induces an action of $A$ on $G_1$ by the following construction:

\begin{proposition}
\label{st:G//A is action}
Let $f \colon A \to G$ be a morphism in $\Grp(\bH)$.
Define a simplicial object $G_1 \dslash A$ as the pullback
\begin{equation}
\label{eq:def G//A}
\begin{tikzcd}
	G_1 \dslash A \ar[r] \ar[d] & G_1 \dslash G \ar[d]
	\\
	* \dslash A \ar[r] & * \dslash G
\end{tikzcd}
\end{equation}
in $\Fun(\bbDelta^\opp, \bH)$.
Then, $G_1 \dslash A$ is an action of $A$ on $G_1$.
\end{proposition}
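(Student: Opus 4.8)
I want to verify the three axioms in Definition~\ref{def:oo-group action} for the simplicial object $G \dslash A$ defined by the pullback square~\eqref{eq:def G//A}. Since pullbacks in $\Fun(\bbDelta^\opp, \bH)$ are computed levelwise, the key observation is that I can analyse the pullback in each simplicial degree separately and, in each degree, use the concrete descriptions of the four corner simplicial objects that are already available. Specifically, $G \dslash G$ has $(G \dslash G)_n \simeq G \times G^{n-1} = G^{n}$ \emph{as an object over} $* \dslash G$, whose $n$-th level is $G^{n-1}$; the trivial action $* \dslash A$ satisfies $(* \dslash A)_n \simeq A^{n-1}$ (via $* \dslash A \simeq \wA$), and likewise $(* \dslash G)_n \simeq G^{n-1}$. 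The structure map $\widehat{f} \colon \wA \to \wG$ realises $* \dslash A \to * \dslash G$ as $A^{n-1} \to G^{n-1}$ induced by $f$ in each degree.

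First I would establish axiom~(1): computing the pullback levelwise, $(G \dslash A)_n$ is the fibre product of $G^n \to G^{n-1}$ (the projection away from the first factor) and $A^{n-1} \to G^{n-1}$. Since the map $G^n \to G^{n-1}$ is the projection onto the last $n-1$ factors — a product projection, hence a pullback-stable fibration in the relevant sense — base change along $A^{n-1} \to G^{n-1}$ yields a canonical equivalence $(G \dslash A)_n \simeq G \times A^{n-1}$. Here I am using that $\bH$ is an $\infty$-topos (so finite limits behave well) together with the compatibility of the various $\gamma_n$-type equivalences from Example~\ref{eg:G//G and Dec^0}. This is precisely the required form $P \times A^{n-1}$ with $P = G$.

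Next I would check axioms~(2) and~(3). For axiom~(3), the morphism $G \dslash A \to \wA$ in $\Fun(\bbDelta^\opp, \bH)$ is simply the left vertical leg of~\eqref{eq:def G//A} composed with the equivalence $* \dslash A \simeq \wA$; this is the required map induced by the collapse $G \to *$, since the whole square sits over the collapse of $G$. For axiom~(2), I need to identify $d_1 \colon G \times A \to G$ with the projection and $s_0 \colon G \to G \times A$ with $1_G \times (* \to A)$. Both assertions can be read off by tracing the corresponding structure maps of $G \dslash G$ (where $d_1$ is the projection and $s_0$ inserts the unit of $G$, by Definition~\ref{def:oo-group action} applied to the self-action) through the pullback, using that the pullback is formed over $* \dslash A \to * \dslash G$ and that $f$ is a group homomorphism sending the unit of $\wA$ to the unit of $\wG$.

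**The main obstacle.** The delicate point is not any single axiom but ensuring that the levelwise pullback descriptions $(G\dslash A)_n \simeq G \times A^{n-1}$ are \emph{coherent}, i.e.\ that they assemble — together with the face and degeneracy maps inherited from $G \dslash G$ and $* \dslash A$ — into a bona fide simplicial object satisfying the structural requirements, rather than merely agreeing degree by degree. I expect the cleanest route is to invoke the coherence already packaged into Example~\ref{eg:G//G and Dec^0} (which produces $G \dslash G$ as a genuine functor $\bbDelta^\opp \to \bH$ via Lemma~\ref{st:equiv on objs pulls back functoriality}) and then transport it through the pullback, using that the equivalences $(G \dslash G)_n \simeq G \times G^{n-1}$ are natural and compatible with the projection $* \dslash G \to * \dslash G$. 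Thus the essential work is bookkeeping with the decalage structure of $G \dslash G$, and the verification of axioms~(1)--(3) follows formally once that compatibility is in hand.
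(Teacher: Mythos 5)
Your proposal is correct and takes essentially the same route as the paper: axiom~(1) via the levelwise pullback computation $(G \dslash A)_n \simeq G \times A^{n-1}$, which the paper phrases through the pasting law applied to the composite of the defining square with the pullback square $(G \dslash G)_n \simeq G \times (* \dslash G)_n \to G$ over $(* \dslash G)_n \to *$, and axioms~(2)--(3) read off directly by tracing $d_1$, $s_0$, and the collapse map through the square~\eqref{eq:def G//A}. Your additional coherence discussion via Lemma~\ref{st:equiv on objs pulls back functoriality} is not spelled out in the paper's (terser) proof of this proposition, but it is exactly the device the paper deploys in the analogous Proposition~\ref{st:pb bun construction}, so it is a sound, harmless refinement rather than a deviation.
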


\begin{proof}
We check the axioms in Definition~\ref{def:oo-group action}:
axiom~(1) follows from the pasting law for pullbacks and the diagram
\begin{equation}
\begin{tikzcd}
	(G_1 \dslash A)_n \ar[r] \ar[d]
	& (G_1 \dslash G)_n \ar[d] \ar[r]
	& G_1 \ar[d]
	\\
	(* \dslash A)_n \ar[r]
	& (* \dslash G)_n \ar[r]
	& *
\end{tikzcd}
\end{equation}
in which the right-hand square is a pullback for any $n \in \NN_0$ by construction of $G_1 \dslash G$.

Axiom~(2) is readily seen from applying the maps $d_1$ and $s_0$ to the diagram~\eqref{eq:def G//A}, for $n = 0,1$.
Axiom~(3) follows since the morphism $G_1 \dslash A \longrightarrow * \dslash A$ induced by the above diagram agrees with the morphism obtained by collapsing the first factor $G_1$.
\end{proof}

\subsection{Principal $\infty$-bundles}
\label{sec:PFBuns}

In this subsection, we characterise principal $\infty$-bundles and group extensions in $\infty$-topoi.
Throughout this section, let $\bH$ be an $\infty$-topos and let $G \in \Grp(\bH)$ be a group object in $\bH$.

\begin{definition}
\label{def:principal oo-bundle}
\emph{\cite[Def.~3.4]{NSS:oo-bundles}}
A \emph{$G$-principal $\infty$-bundle on an object $B \in \bH$} is a $G$-action $P \dslash G \to B$ over $B$ such that the augmented simplicial object $P \dslash G \to B$ is a colimiting cocone for the simplicial diagram $P \dslash G \in \Fun(\bbDelta^\opp, \bH)$.
In other words, the augmenting map $p \colon P \to B$ induces an equivalence $\colim^\bH_{\bbDelta^\opp}(P \dslash G) \eq B$ in $\bH$.

A \emph{morphism of $G$-principal $\infty$-bundles on $B$}, denoted $(P \dslash G \to B) \longrightarrow (Q \dslash G \to B)$, is a morphism of the underlying $G$-actions over $B$.
The $\infty$-category $\Bun_G(B)$ of $G$-principal $\infty$-bundles over $B$ is the full $\infty$-subcategory of \smash{$\Fun(\bbDelta^\opp, \bH)_{/(B \times (* \dslash G))}$} (cf.~Definition~\ref{def:G-action over X}) on the $G$-principal $\infty$-bundles on $B$.
\end{definition}

\begin{example}
Let $G \in \Grp(\bH)$.
For any $G$-action $P \dslash G$ in $\bH$, the morphism $P \dslash G \to |P \dslash G|$ is a principal $G$-bundle in $\bH$ over $|P \dslash G|$.
As concrete examples of this type, we have already seen that $G_1 \dslash G$ exhibits $G_1$ as a principal $G$-bundle over $* \in \bH$ (Proposition~\ref{st:|G//G| = *}), and that $* \dslash G$ exhibits $*$ as a principal $G$-bundle over $\rmB G$ (by the definition of $\rmB G$).
\qen
\end{example}

We now provide an alternative characterisation of principal $\infty$-bundles in $\infty$-topoi.
Let $G \in \Grp(\bH)$ be a group object in $\bH$, and let $p \colon P \dslash G \to B$ be a $G$-action over an object $B \in \bH$.
Let \smash{$\imath \colon \bbDelta_{+, \leq 0}^\opp \hookrightarrow \bbDelta_+^\opp$} be the inclusion.
The identity provides a canonical equivalence
\begin{equation}
	\eta \colon \{p\} = \imath^*(P \dslash G \to B) \eq \imath^*(\cC p) = \{p\}
\end{equation}
in $\Fun(\bbDelta_{+, \leq 0}^\opp, \bH) \simeq \Fun(\Delta^1, \bH)$.
Since right Kan extension is a right adjoint, there is an equivalence
\begin{equation}
	\ul{\Fun}(\bbDelta_{+, \leq 0}^\opp, \bH) \big( \imath^*(P \dslash G \to B), \{p\} \big)
	\simeq \ul{\Fun}(\bbDelta_+^\opp, \bH) \big( (P \dslash G \to B), \cC p \big)
\end{equation}
of mapping spaces (compare also~\eqref{eq:comp mp gpd v Cech nerve}).
We denote the image of $\eta$ under this equivalence by
\begin{equation}
	\alpha \colon (P \dslash G \to B) \longrightarrow \cC p\,.
\end{equation}
Observe that, by construction, the restriction of $\alpha$ along $\imath$ is $\eta$.
We will not distinguish notationally between $\alpha$ as defined here and its restriction along the inclusion $\bbDelta^\opp \subset \bbDelta_+^\opp$ (since $\alpha_{-1} = 1_B$).

\begin{definition}
\label{def: principal action}
A $G$-action $P \dslash G \longrightarrow B$ over $B \in \bH$ is called \emph{principal} if the canonical morphism $\alpha \colon P \dslash G \longrightarrow \cC p$ is an equivalence in $\Fun(\bbDelta^\opp, \bH)$.
\end{definition}

This is an $\infty$-categorical version of the principality condition for a group action.
It is, in fact, equivalent to the usual principality condition---that the action morphism $P \times G_1 \to P \times_B P$ is an equivalence---in the following sense (in particular, this implies the converse to~\cite[Prop.~3.7]{NSS:oo-bundles}):

\begin{lemma}
\label{st:principality condition}
Let $P \dslash G \to B$ be a $G$-action over $B \in \bH$.
The following are equivalent:
\begin{myenumerate}
\item The $G$-action is principal.

\item The diagram
\begin{equation}
\label{eq:principality at level 1}
\begin{tikzcd}[column sep=1.5cm, row sep=1cm]
	P \times G_1 \ar[r, "d_1 = \pr_P"] \ar[d, "a = d_0"'] & P \ar[d, "p"]
	\\
	P \ar[r, "p"'] & B
\end{tikzcd}
\end{equation}
is a pullback diagram in $\bH$.
\end{myenumerate}
\end{lemma}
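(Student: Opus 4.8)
The plan is to reduce the claimed equivalence to Proposition~\ref{st:gpd obs via X_1 X_1 X_0}, using that a $G$-action is automatically a groupoid object by Theorem~\ref{st:grp actions are gpd obs}. Write $\widehat{P} \coloneqq (P \dslash G \to X) \in \Fun(\bbDelta_+^\opp, \bH)$ for the augmented simplicial object, let $\imath \colon \bbDelta_{+, \leq 0}^\opp \hookrightarrow \bbDelta_+^\opp$ be the inclusion, and recall from Definition~\ref{def:Cech nerve} that $\cC p = \Ran_\imath \imath^* \widehat{P}$, since $\{p\} = \imath^* \widehat{P}$.

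First I would identify the comparison morphism $\alpha$ with the unit of the adjunction $\imath^* \dashv \Ran_\imath$ at $\widehat{P}$. Indeed, $\alpha$ was defined as the image of $\eta = \id_{\imath^* \widehat{P}}$ under the adjunction equivalence $\ul{\Fun}(\bbDelta_{+, \leq 0}^\opp, \bH)(\imath^* \widehat{P}, \{p\}) \simeq \ul{\Fun}(\bbDelta_+^\opp, \bH)(\widehat{P}, \cC p)$, and the image of an identity morphism under such an adjunction equivalence is precisely the unit $\widehat{P} \to \Ran_\imath \imath^* \widehat{P}$. By the defining universal property of the right Kan extension, this unit is an equivalence exactly when $\widehat{P}$ is a right Kan extension of its restriction $\imath^* \widehat{P}$ along $\imath$ (full faithfulness of $\Ran_\imath$, coming from that of $\imath$, makes this a mere property of $\widehat{P}$). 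Thus the $G$-action is principal in the sense of Definition~\ref{def: principal action} if and only if $\widehat{P}$ is a right Kan extension of $\widehat{P}_{|\bbDelta_{+, \leq 0}^\opp}$.

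Next I would invoke Proposition~\ref{st:gpd obs via X_1 X_1 X_0} — whose hypothesis that $\cC p$ exists is automatic since $\bH$ is an $\infty$-topos and hence has all finite limits: $\widehat{P}$ is such a right Kan extension if and only if its underlying simplicial object is a groupoid object and the square $\widehat{P}_{|\bbDelta_{+, \leq 1}^\opp}$ is a pullback. Here Theorem~\ref{st:grp actions are gpd obs} does the essential work, as the underlying simplicial object $P \dslash G$ of any $G$-action is automatically a groupoid object, so the first of the two conditions is vacuous and principality becomes equivalent to the level-one square being a pullback. Finally I would match this square with~\eqref{eq:principality at level 1}: the square $\widehat{P}_{|\bbDelta_{+, \leq 1}^\opp}$ has vertices $\widehat{P}_1 = P \times G$, $\widehat{P}_0 = P$, and $\widehat{P}_{-1} = X$, with parallel faces $d_0 = a$ and $d_1 = \pr_P$ and augmentation $p$; this is exactly~\eqref{eq:principality at level 1} up to transposition across the diagonal, which does not affect whether a square is a pullback.

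There is no hard computation internal to this lemma, since its substance has been externalised. The only genuine subtlety is the first step — correctly recognising $\alpha$ as the adjunction unit, thereby translating ``principal'' into ``right Kan extension of the restriction to $\bbDelta_{+, \leq 0}^\opp$''. The real input is Theorem~\ref{st:grp actions are gpd obs}, proved separately in Appendix~\ref{app:Actions and CatObs}, which is precisely what allows us to discard the groupoid-object condition supplied by Proposition~\ref{st:gpd obs via X_1 X_1 X_0} and isolate the single level-one pullback condition of~\eqref{eq:principality at level 1}.
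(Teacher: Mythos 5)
Your proof is correct and follows essentially the same route as the paper: both reduce the lemma to Proposition~\ref{st:gpd obs via X_1 X_1 X_0} and use Theorem~\ref{st:grp actions are gpd obs} to make the groupoid-object condition automatic, so that principality becomes exactly the level-one pullback condition~\eqref{eq:principality at level 1}. Your explicit identification of $\alpha$ with the unit of the adjunction $\imath^* \dashv \Ran_\imath$ is a welcome clarification of a step the paper's proof leaves implicit (``principal precisely if it is equivalent, as an augmented simplicial object, to the \v{C}ech nerve''), but it is the same argument.
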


\begin{proof}
(1) implies~(2) since the action $P \dslash G \overset{p}{\longrightarrow} B$ is principal precisely if it is equivalent, as an augmented simplicial object in $\bH$, to the \v{C}ech nerve $\cC p = \Ran_\iota \{p\}$.
Thus, the implication follows from Proposition~\ref{st:gpd obs via X_1 X_1 X_0}.

Conversely, (2) also implies~(1):
we know from Theorem~\ref{st:grp actions are gpd obs} that $P \dslash G$ is a groupoid object.
If we additionally have that~\eqref{eq:principality at level 1} is a pullback diagram, then we can again apply Proposition~\ref{st:gpd obs via X_1 X_1 X_0} to obtain the claim.
\end{proof}

We can use Lemma~\ref{st:principality condition} to give a characterisation of principal $\infty$-bundles which can be understood as encoding directly the classical criteria for principal bundles:
a locally trivial map $p \colon P \to B$ and a principal $G$-action over $B$.

\begin{proposition}
\label{st:pfbun characterisation}
Let $P \dslash G \xrightarrow{p} B$ be a $G$-action over an object $B \in \bH$.
The following are equivalent:
\begin{myenumerate}
\item $P \dslash G \overset{p}{\longrightarrow} B$ is a principal $\infty$-bundle (in the sense of Definition~\ref{def:principal oo-bundle}).

\item The morphism $p$ is an effective epimorphism and the action $P \dslash G$ is principal.
\end{myenumerate}
\end{proposition}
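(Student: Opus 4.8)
The plan is to prove both implications directly, leaning on the fact that the action $P \dslash G$ is automatically a groupoid object (Theorem~\ref{st:grp actions are gpd obs}) together with the effectivity of groupoids in an $\infty$-topos (Definition~\ref{def:oo-topos}(4)). The observation that links the two conditions is that the comparison morphism $\alpha \colon P \dslash G \to \cC p$ from Definition~\ref{def: principal action} is \emph{the same} canonical morphism $\eta$ constructed in~\eqref{eq:comp mp gpd v Cech nerve}: both arise from the identity $\imath^*(P \dslash G \to X) = \{p\}$ via the adjunction that presents the \v{C}ech nerve as a right Kan extension. I would establish this identification first, since it is what allows me to trade principality of the action for effectivity of the underlying groupoid.

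For the implication $(1) \Rightarrow (2)$, I would invoke Theorem~\ref{st:grp actions are gpd obs} to see that $P \dslash G$ is a groupoid object in $\bH$. By the hypothesis in~(1), the augmented object $P \dslash G \to X$ is a colimiting cocone, so $X \simeq |P \dslash G|$ and the augmentation $p$ is identified with the canonical morphism $\widehat{X}_0 \to |\widehat{X}|$ of Definition~\ref{def:oo-topos}(4). Effectivity of groupoids then immediately gives that the comparison morphism $\alpha = \eta \colon P \dslash G \to \cC p$ is an equivalence, which is precisely principality of the action, and that $p$ is an effective epimorphism. This yields both parts of~(2).

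For the converse $(2) \Rightarrow (1)$, principality supplies an equivalence $\alpha \colon P \dslash G \eq (\cC p)_{|\bbDelta^\opp}$ of simplicial objects, hence an equivalence $|P \dslash G| \eq |\cC p|$ on geometric realisations. Because $\alpha$ is a morphism of augmented simplicial objects with $\alpha_{-1} = 1_X$, this equivalence is compatible with the canonical cocone maps down to $X$. The effective-epimorphism hypothesis says exactly that $|\cC p| \eq X$ is an equivalence, so the composite $|P \dslash G| \eq |\cC p| \eq X$ exhibits $P \dslash G \to X$ as a colimiting cocone, i.e.~as a principal $\infty$-bundle in the sense of Definition~\ref{def:principal oo-bundle}.

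The main obstacle I anticipate is purely one of bookkeeping rather than mathematical substance: checking carefully that $\alpha$, as defined before Definition~\ref{def: principal action}, genuinely coincides with the comparison morphism $\eta$ of~\eqref{eq:comp mp gpd v Cech nerve} under the identification $X \simeq |P \dslash G|$, and that the equivalences produced above respect the augmentations, so that what emerges is the colimiting-cocone condition and not merely an abstract equivalence of objects. Everything else is an application of results already in place; Theorem~\ref{st:grp actions are gpd obs} is doing the essential work, since without knowing that a $G$-action is a groupoid object one could not appeal to effectivity of groupoids at all.
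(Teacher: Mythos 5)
Your proof is correct and takes essentially the same route as the paper's: for $(1)\Rightarrow(2)$ both arguments combine Theorem~\ref{st:grp actions are gpd obs} with effectivity of groupoids (Definition~\ref{def:oo-topos}(4)), using exactly the identification of $\alpha$ with the comparison morphism $\eta$ of~\eqref{eq:comp mp gpd v Cech nerve} that you flag, and for $(2)\Rightarrow(1)$ both realise the equivalence $\alpha$ and compose $|P \dslash G| \eq |\cC p| \eq X$ over the augmentations. The only cosmetic difference is that the paper additionally invokes \cite[Prop.~3.7]{NSS:oo-bundles} via Lemma~\ref{st:principality condition} to reconfirm principality in the forward direction, whereas you obtain it directly from effectivity.
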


\begin{proof}
To see that (1) implies (2), first observe that since $P \dslash G$ is a groupoid object in $\bH$, and since by assumption the canonical morphism $|P \dslash G| \to B$ is an equivalence, it follows from Definition~\ref{def:oo-topos}(4) that the canonical morphism $\alpha \colon P \dslash G \to \cC p$ is an equivalence in $\Fun(\bbDelta^\opp, \bH)$.
In particular, $p$ is an effective epimorphism.
Further, it has been shown in~\cite[Prop.~3.7]{NSS:oo-bundles} that if $P \dslash G \to B$ is a principal bundle, then the action $P \dslash G$ satisfies condition~(2) of Lemma~\ref{st:principality condition}, and so the action is principal.

To see the other direction, consider the commutative diagram
\begin{equation}
\label{eq:pfbun via principality}
\begin{tikzcd}
	{|P \dslash G|} \ar[rr, "|\alpha|"] \ar[dr] & & {|\cC p|} \ar[dl]
	\\
	& B &
\end{tikzcd}
\end{equation}
In this case, both the top and the right-hand morphisms in diagram are equivalences.
It thus follows that also the left-hand morphism is an equivalence, which amounts to the fact that $P \dslash G \to B$ is a principal $G$-bundle in the sense of Definition~\ref{def:principal oo-bundle}.
\end{proof}

\begin{theorem}
\label{st:L pres (princ) actions}
Let $\rmL \colon \bH \to \bH'$ be a functor between $\infty$-topoi which preserves geometric realisations and finite products.
Suppose $G$ is a group object in $\bH$.
\begin{myenumerate}
\item $\rmL$ maps $G$-actions $P \dslash G \xrightarrow{p} B$ over $B \in \bH$ to $\rmL G$-actions $\rmL P \dslash \rmL G \xrightarrow{\rmL p} \rmL B$ over $\rmL B \in \bH'$.

\item If the action $P \dslash G \to B$ is a principal $G$-bundle, then the action $\rmL P \dslash \rmL G \xrightarrow{\rmL p} \rmL B$ is a principal $\rmL G$-bundle.
\end{myenumerate}
\end{theorem}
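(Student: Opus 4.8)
The plan is to verify both claims directly from the definitions, taking advantage of the fact that the defining data of a group action (Definition~\ref{def:oo-group action}) and of a principal bundle (Definition~\ref{def:principal oo-bundle}) consist only of finite products, final objects, and geometric realisations of simplicial objects --- precisely the structure that $\rmL$ preserves by hypothesis. Deliberately, I would \emph{not} route the argument through the principality characterisation of Proposition~\ref{st:pfbun characterisation} (or Lemma~\ref{st:principality condition}), since those involve effective epimorphisms and pullbacks, and $\rmL$ need not preserve either.

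For part~(1), I would apply $\rmL$ levelwise to the simplicial object $P \dslash G$ and check that the result is an $\rmL G$-action on $\rmL P$ in the sense of Definition~\ref{def:oo-group action}. First, $\rmL$ preserves group objects by Proposition~\ref{st:preservation of Grp and B}(1), so $\rmL \wG$ is a group object in $\bH'$ with underlying object $\rmL G$. Since $\rmL$ preserves finite products, each level, a finite product $(P \dslash G)_n \simeq P \times G^n$, is sent to $\rmL P \times (\rmL G)^n$, which gives axiom~(1). For axiom~(2), $\rmL$ carries the projection $d_1 = \pr_P$ to the corresponding projection $\pr_{\rmL P}$, and, because it also preserves the final object, carries $s_0 = 1_P \times (* \to G)$ to $1_{\rmL P} \times (* \to \rmL G)$. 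Axiom~(3) follows by applying $\rmL$ to the structure morphism $P \dslash G \to \wG$, whose image is the morphism $\rmL P \dslash \rmL G \to \rmL \wG$ induced by the collapse $\rmL P \to *$. Thus $\rmL P \dslash \rmL G$ is an $\rmL G$-action; Theorem~\ref{st:grp actions are gpd obs} then guarantees \emph{a posteriori} that it is again a groupoid object, which is exactly the reason the weaker Definition~\ref{def:oo-group action} was adopted in place of the groupoid-object definition of~\cite{NSS:oo-bundles}.

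For part~(2), I would use the colimit-based Definition~\ref{def:principal oo-bundle}: the hypothesis is that the augmented simplicial object $P \dslash G \to X$ is a colimiting cocone in $\bH$, equivalently that the augmentation $|P \dslash G| \to X$ is an equivalence. Applying $\rmL$ and using that it preserves geometric realisations, the image augmented simplicial object $\rmL P \dslash \rmL G \to \rmL X$ is again a colimiting cocone; concretely, the canonical map $|\rmL P \dslash \rmL G| \simeq \rmL |P \dslash G| \to \rmL X$ induced from the equivalence $|P \dslash G| \simeq X$ is an equivalence. By part~(1) its domain is the $\rmL G$-action $\rmL P \dslash \rmL G$, so this is precisely the assertion that $\rmL P \dslash \rmL G \to \rmL X$ is a principal $\rmL G$-bundle.

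I do not expect a genuine obstacle: each claim reduces to an elementary compatibility of $\rmL$ with the product-type and colimit-type data in the definitions. The one point that must be handled with care --- and the conceptual crux of the statement --- is that the proof has to proceed via the geometric-realisation characterisation rather than via effective epimorphisms, precisely because $\rmL$ preserves neither effective epimorphisms nor pullbacks in general. This is why Definition~\ref{def:oo-group action} was arranged to demand only product-type data (deferring the groupoid condition to Theorem~\ref{st:grp actions are gpd obs}) and why the principal-bundle condition is formulated in terms of geometric realisations.
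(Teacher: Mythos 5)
Your proposal is correct and follows essentially the same route as the paper: part~(1) is the same definitional check using preservation of finite products (and final objects), and part~(2) is verbatim the paper's argument via $|\rmL P \dslash \rmL G| \simeq \rmL |P \dslash G| \eq \rmL X$. Your closing observation---that the argument must avoid effective epimorphisms and pullbacks, which is exactly why Definition~\ref{def:oo-group action} demands only product-type data with the groupoid condition supplied by Theorem~\ref{st:grp actions are gpd obs}---is precisely the content of the remark the paper places after this theorem; the only micro-detail you (like the paper) leave implicit is the use of Lemma~\ref{st:equiv on objs pulls back functoriality} to replace $\rmL(P \dslash G)$, whose levels are only \emph{equivalent} to $\rmL P \times (\rmL G)^n$, by an equivalent simplicial object whose levels are literally these products.
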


\begin{proof}
Since $\rmL$ preserves finite products, the first claim follows readily from Definition~\ref{def:oo-group action}.

For the second claim, recall that $P \dslash G \to B$ is a principal $G$-bundle precisely if the map $|P \dslash G| \to B$ is an equivalence.
Applying the functor $\rmL$ to this morphism, we obtain an equivalence \smash{$\rmL |P \dslash G| \eq \rmL B$}.
Since $\rmL$ preserves geometric realisations, and using claim~(1), we obtain further canonical equivalences
\begin{equation}
	|\rmL P \dslash \rmL G| \eq \rmL |P \dslash G| \eq \rmL B\,,
\end{equation}
which establishes the action $\rmL P \dslash \rmL G \xrightarrow{\rmL p} \rmL B$ as a principal $\rmL G$-bundle over $B$.
\end{proof}

\begin{proposition}
\label{st:pb bun construction}
Let $G$ be a group object in $\bH$, and let $P \dslash G \to C$ be a $G$-principal $\infty$-bundle in $\bH$.
For any morphism $f \colon B \to C$ in $\bH$, there is a canonical $G$-action over $B$ on the pullback $Q \coloneqq B \times_C P$ that makes $Q \dslash G \to B$ into a $G$-principal $\infty$-bundle on $B$.
\end{proposition}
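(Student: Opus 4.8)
The plan is to produce the action on $Q$ by base-changing the whole action groupoid $P \dslash G$ along $f$, and then to read off the principal-bundle property directly from the universality of colimits in the $\infty$-topos $\bH$ (Definition~\ref{def:oo-topos}(2)).

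First I would define the candidate $Q \dslash G$ as the levelwise pullback $(P \dslash G) \times_{\sfc Y} \sfc X$ in $\Fun(\bbDelta^\opp, \bH)$, where $\sfc X, \sfc Y$ are the constant simplicial objects, the arrow $P \dslash G \to \sfc Y$ is the natural transformation underlying the colimiting cocone $P \dslash G \to Y$ (which exists precisely because $P \dslash G \to Y$ is a $G$-action over $Y$), and the arrow $\sfc X \to \sfc Y$ is $\sfc f$. Since pullbacks in a functor $\infty$-category are computed objectwise, its $n$-th level is $(P \times G^n) \times_Y X$, where the structure map $P \times G^n \to Y$ factors as the projection $\pr_P$ followed by $p$. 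As this map ignores the $G^n$-factor, base change along $f$ leaves that factor untouched and yields a canonical equivalence $(P \times G^n) \times_Y X \simeq (P \times_Y X) \times G^n \simeq Q \times G^n$. I would then check the axioms of Definition~\ref{def:oo-group action}: axiom~(1) is this identification, while axioms~(2) and~(3) descend from the corresponding structure on $P \dslash G$ by functoriality of the pullback, the required map $Q \dslash G \to \wG$ being the composite of the projection $Q \dslash G \to P \dslash G$ with the structure map $P \dslash G \to \wG$.

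Once $Q \dslash G$ is established as a genuine $G$-action over $X$, the principal-bundle property is immediate. Taking $K = \bbDelta^\opp$, the diagram $D = P \dslash G$ with its colimiting cocone of apex $Y$, and the morphism $f \colon X \to Y$, Definition~\ref{def:oo-topos}(2) provides a canonical equivalence
\[
	\colim^\bH_{\bbDelta^\opp}(Q \dslash G)
	= \colim^\bH_{\bbDelta^\opp}\big( (P \dslash G) \times_{\sfc Y} \sfc X \big)
	\simeq \big( \colim^\bH_{\bbDelta^\opp}(P \dslash G) \big) \times_Y X
	\simeq Y \times_Y X
	\simeq X,
\]
compatibly with the augmentation $\pr_X \colon Q \to X$. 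Hence $Q \dslash G \to X$ is a colimiting cocone, i.e.\ a $G$-principal $\infty$-bundle in the sense of Definition~\ref{def:principal oo-bundle}.

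I expect the main obstacle to be the second step: upgrading the levelwise pullback to an honest $G$-action, that is, producing the equivalences $(P \times G^n) \times_Y X \simeq Q \times G^n$ coherently in $[n]$ and compatibly with all face and degeneracy maps, rather than merely levelwise on objects. The point that makes this work is that the structure map into $\sfc Y$ factors through $P$, so the $G^n$-factors are genuinely preserved under base change. Should this coherence prove awkward to handle directly, an alternative is to avoid universality and instead invoke Proposition~\ref{st:pfbun characterisation}: the augmentation $Q \to X$ is an effective epimorphism since it is the pullback of the effective epimorphism $p$ along $f$, which is stable under pullback by Lemma~\ref{st:EEpis stable under pb and po}, and the level-one principality square for $Q$ is obtained by base-changing the principality square for $P$ (Lemma~\ref{st:principality condition}) along $f$ via the pasting law for pullbacks.
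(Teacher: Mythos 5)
Your construction and your main principality argument coincide with the paper's own proof: the paper likewise defines the candidate action as the pullback $\sfc X \times_{\sfc Y} (P \dslash G)$ in $\Fun(\bbDelta^\opp, \bH)$, identifies its levels as $(X \times_Y P) \times G^n$ using exactly your observation that the augmentation factors through the projection to $P$, and then deduces principality from the universality of colimits via the same chain $|\sfc X \times_{\sfc Y} (P \dslash G)| \simeq X \times_Y |P \dslash G| \simeq X$. The one step you flag but do not actually close is, however, a genuine requirement rather than an optional refinement: Definition~\ref{def:oo-group action}(1) demands that the levels of a $G$-action be \emph{literally} of the strict product form, so the levelwise equivalences must be upgraded to an essentially unique simplicial object $(X \times_Y P) \dslash G$ with those prescribed levels together with an equivalence to $\sfc X \times_{\sfc Y} (P \dslash G)$ in $\Fun(\bbDelta^\opp, \bH)$. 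The paper resolves this with Lemma~\ref{st:equiv on objs pulls back functoriality} (lifting an equivalence defined on $\obj(\bbDelta^\opp)$ to a functor on $\bbDelta^\opp$, with contractible space of such lifts), which was proved precisely for this purpose; your remark that the $G^n$-factors are preserved under base change supplies the objectwise data but not, by itself, the simplicial coherence. Note also that your proposed fallback does not sidestep this issue: Proposition~\ref{st:pfbun characterisation}, like Definition~\ref{def:principal oo-bundle}, applies only to $G$-actions over $X$ in the sense of Definition~\ref{def:oo-group action}, so the strictification via Lemma~\ref{st:equiv on objs pulls back functoriality} is needed on either route (though your effective-epimorphism-plus-pasting-law alternative for the principality square is otherwise sound, and is a legitimate variant of the paper's colimit argument). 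With that lemma inserted at the flagged point, your proof is essentially the paper's.
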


\begin{proof}
Let $\sfc \colon \bH \longrightarrow \Fun(\bbDelta^\opp, \bH)$ be the constant-diagram functor.
Consider the pullback diagram
\begin{equation}
\label{eq:pb bundle from pb in sH}
\begin{tikzcd}[row sep=1.25cm, column sep=1.5cm]
	\sfc B \times_{\sfc C} (P \dslash G) \ar[r, "\hat{f}"] \ar[d, "f^*p"']
	& P \dslash G \ar[d, "p"]
	\\
	\sfc B \ar[r, "\sfc f"']
	& \sfc C
\end{tikzcd}
\end{equation}
in $\Fun(\bbDelta^\opp,\bH)$ (or, equivalently, in $\Gpd(\bH)$).
For any $[n] \in \bbDelta$ there exists a canonical equivalence
\begin{equation}
	\big( \sfc B \times_{\sfc C} (P \dslash G) \big)_n
	\simeq B \times_C (P \times G_1^{n-1})
	\simeq (B \times_C P) \times G_1^{n-1}\,.
\end{equation}
We use Lemma~\ref{st:equiv on objs pulls back functoriality} to obtain from these equivalences a canonical pair (up to contractible choices) of an object $(B \times_C P) \dslash G \in \Fun(\bbDelta^\opp,\bH)$, with $((B \times_C P) \dslash G)_n = (B \times_C P) \times G_1^n$ for all $n \in \NN_0$, together with an equivalence
\begin{equation}
\label{eq:action in pb bundle}
	(B \times_C P) \dslash G \eq \sfc B \times_{\sfc C} (P \dslash G)
\end{equation}
of simplicial objects in $\bH$.
By a slight abuse of notation, we also denote the composition
\begin{equation}
	(B \times_C P) \dslash G
	\eq \sfc B \times_{\sfc C} (P \dslash G)
	\longrightarrow \sfc B
\end{equation}
by $f^*p$.
It follows by construction that $(B \times_C P) \dslash G \overset{f^*p}{\longrightarrow} B$ is a $G$-action over $B$.
We are hence left to show that it is a principal $\infty$-bundle.

To that end, we will show that the morphism
\begin{equation}
	|(B \times_C P) \dslash G| \longrightarrow B
\end{equation}
is an equivalence.
Diagram~\eqref{eq:pb bundle from pb in sH} is a diagram of the form $\Delta^1 \times \Delta^1 \longrightarrow \Fun(\bbDelta^\opp, \bH)$.
Composing with the functor $\colim_{\bbDelta^\opp}^\bH = |{-}| \colon \Fun(\bbDelta^\opp, \bH) \longrightarrow\bH$ we obtain a diagram
\begin{equation}
\label{eq:realisation of pb bundle}
\begin{tikzcd}[row sep=1.25cm, column sep=1.5cm]
	{|(B \times_C P) \dslash G|} \ar[r, "|\hat{f}|"] \ar[d, "f^*p"']
	& {|P \dslash G|} \ar[d, "p", "\simeq"']
	\\
	B \ar[r, "f"']
	& C
\end{tikzcd}
\end{equation}
in $\bH$.
The right-hand morphism is an equivalence since $P \dslash G \to C$ is assumed to be a principal $\infty$-bundle.
Using the equivalence~\eqref{eq:action in pb bundle}, diagram~\eqref{eq:realisation of pb bundle} is equivalent to the diagram
\begin{equation}
\label{eq:geo rel of pb bundle}
\begin{tikzcd}[row sep=1.25cm, column sep=1.5cm]
	{|\sfc B \times_{\sfc C} (P \dslash G)|} \ar[r, "|\hat{f}|"] \ar[d, "f^*p"']
	& {|P \dslash G|} \ar[d, "p", "\simeq"']
	\\
	B \ar[r, "f"']
	& C
\end{tikzcd}
\end{equation}
By the universality of colimits in $\bH$, we have a canonical equivalence
\begin{equation}
	|\sfc B \times_{\sfc C} (P \dslash G)|
	\simeq B \times_C |P \dslash G|\,.
\end{equation}
This establishes that the morphism $f^*p$ in diagram~\eqref{eq:geo rel of pb bundle} is the pullback of an equivalence in $\bH$, and hence that $f^*p$ is an equivalence itself.
\end{proof}

One can now show that every $G$-principal $\infty$-bundle arises as a pullback of the bundle $(* \dslash G) \to \rmB G$.
This insight is not new, but has been observed in~\cite[Prop.~3.13, Thm.~3.17]{NSS:oo-bundles} already.
However, in Section~\ref{sec:String Model} it will be important to have a good understanding of the classifying map of a principal $\infty$-bundle, and so we include a brief treatment of these maps.
We start with two short technical lemmas, before constructing for each $G$-principal $\infty$-bundle in $\bH$ its classifying map.

\begin{lemma}
\label{st:G//G as pb Bun}
Let $G$ be a group object in $\bH$, and let $f \colon * \to \rmB G$ be the base point of $\rmB G$.
The pullback of the canonical bundle $(* \dslash G) \to \rmB G$ along $f$ agrees with the bundle $G_1 \dslash G \to *$.
\end{lemma}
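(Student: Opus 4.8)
The plan is to reduce the statement to the décalage description of $G \dslash G$ from Example~\ref{eg:G//G and Dec^0} together with the pullback construction of Proposition~\ref{st:pb bun construction}. First I would record that the universal bundle $(* \dslash G) \to \rmB G$ has structure map precisely the basepoint $f \colon * \to \rmB G$: by construction $\rmB G = |* \dslash G|$, and the augmenting morphism $(* \dslash G)_0 = * \to \rmB G$ is the canonical effective epimorphism, which is exactly the basepoint. Since $* \dslash G \to \rmB G$ is a principal $\infty$-bundle, Proposition~\ref{st:pfbun characterisation} together with Lemma~\ref{st:principality condition} yields a canonical equivalence $* \dslash G \simeq \cC f$ of (augmented) simplicial objects, so that $(* \dslash G)_n \simeq *^{\times_{\rmB G}(n+1)}$.

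Next I would apply Proposition~\ref{st:pb bun construction} with $X = *$, $Y = \rmB G$, the bundle $P \dslash G = {*} \dslash G$, and classifying map $f$. This exhibits the pullback bundle $f^*(* \dslash G)$ as $\sfc * \times_{\sfc \rmB G} (* \dslash G)$ over the base $*$, with total object $Q = * \times_{\rmB G} * \simeq \Omega \rmB G \simeq G$. It then remains to identify this pulled-back simplicial object, as a $G$-action over $*$, with $G \dslash G$.

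The core of the argument is the identity $\sfc * \times_{\sfc \rmB G} \cC f \simeq \Dec^0(\cC f)$, i.e.\ the décalage of a \v{C}ech nerve is the pullback of that \v{C}ech nerve along its own structure map. Levelwise this is immediate, since
\[
\big( \sfc * \times_{\sfc \rmB G} (* \dslash G) \big)_n
\simeq * \times_{\rmB G} *^{\times_{\rmB G}(n+1)}
\simeq *^{\times_{\rmB G}(n+2)}
\simeq (* \dslash G)_{n+1}
= \big( \Dec^0 (* \dslash G) \big)_n\,,
\]
and this identification is compatible with the faces and degeneracies, both sides amounting to dropping or inserting a fibre factor over $\rmB G$. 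Combined with the equivalence $\Dec^0(* \dslash G) = \Dec^0 \widehat{G} \simeq G \dslash G$ of Example~\ref{eg:G//G and Dec^0} (using $* \dslash G \simeq \widehat{G}$) and the augmentation $(\cC f)_0 = *$, this produces the asserted equivalence of bundles $f^*(* \dslash G) \simeq (G \dslash G \to *)$.

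The main obstacle will be coherence: promoting the levelwise equivalence above to an equivalence of simplicial objects in $\Fun(\bbDelta^\opp, \bH)$ rather than a mere degreewise collection of equivalences. I would handle this by assembling the comparison from the natural structure of décalage, which supplies a natural projection $\pi \colon \Dec^0 \Rightarrow \mathrm{id}$ (the extra face maps arising from $d^0 \colon \mathrm{id} \Rightarrow [0] \star (-)$) and a natural augmentation $\epsilon \colon \Dec^0(* \dslash G) \to \sfc *$; these fit into a square over $p \colon * \dslash G \to \sfc \rmB G$ and $\sfc f \colon \sfc * \to \sfc \rmB G$. Its commutativity reduces, via the adjunction $|{-}| \dashv \sfc$, to the agreement of two maps $|\Dec^0(* \dslash G)| \simeq * \to \rmB G$, both equal to $f$; and the square is then a pullback by the levelwise computation above, since limits in $\Fun(\bbDelta^\opp, \bH)$ are computed degreewise. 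Alternatively, one may simply invoke the general décalage results of~\cite{Stevenson:Decalage_and_spl_loop_grps} to obtain the equivalence $\sfc * \times_{\sfc \rmB G} \cC f \simeq \Dec^0(\cC f)$ directly.
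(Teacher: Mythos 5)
Your proposal is correct and in essence coincides with the paper's proof: the paper likewise exhibits the square with $G \dslash G \to * \dslash G$ over $* \to \sfc \rmB G$ in $\Fun(\bbDelta^\opp, \bH)$, verifies that it is a levelwise pullback (there phrased via the equivalence $G \simeq \Omega \rmB G$, which is exactly your computation $(\cC f)_{n+1} \simeq * \times_{\rmB G} (\cC f)_n$ coming from effectivity of $* \dslash G \simeq \cC f$), and concludes by Proposition~\ref{st:pb bun construction}. Your d\'ecalage packaging via $\sfc * \times_{\sfc \rmB G} \cC f \simeq \Dec^0(\cC f)$ and Example~\ref{eg:G//G and Dec^0} merely spells out the simplicial coherence that the paper's terse ``level-wise a pullback'' argument leaves implicit, which is a welcome refinement rather than a different method.
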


\begin{proof}
Consider the commutative square
\begin{equation}
\begin{tikzcd}
	G_1 \dslash G \ar[r] \ar[d] & * \dslash G \ar[d]
	\\
	* \ar[r] & \sfc\rmB G
\end{tikzcd}
\end{equation}
in $\Fun(\bbDelta^\opp, \bH)$.
By the canonical equivalence $G \simeq \Omega \rmB G$ in $\Grp(\bH)$ (see~\eqref{eq:Omega -| B adjoint equiv}), this diagram is level-wise a pullback, i.e.~it is a pullback diagram in $\Fun(\bbDelta^\opp, \bH)$.
That proves the claim by Proposition~\ref{st:pb bun construction}.
\end{proof}

\begin{definition}
\label{def:trivial bundle}
A $G$-principal $\infty$-bundle $P \dslash G \to B$ is \emph{trivial} if it is equivalent in $\Bun_G(B)$ to the \emph{trivial $G$-principal $\infty$-bundle} $B \times (G_1 \dslash G)  \to B$, i.e.~if there is an equivalence of simplicial objects in $\bH_{/B}$ between $P \dslash G$ and $B \times (G \dslash G)$ that commutes with the canonical morphisms to $* \dslash G$.
\end{definition}

\begin{lemma}
\label{st:trivial bundle over pb}
\emph{\cite[Prop.~3.12]{NSS:oo-bundles}}
For every $G$-principal $\infty$-bundle $P \dslash G \to B$ in $\bH$, there exists an effective epimorphism $U \to B$ such that the pullback bundle $U \times_B (P \dslash G)$ is trivial.
\end{lemma}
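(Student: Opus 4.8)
The plan is to take the cover $U \coloneqq P$ together with the structure map $p \colon P \to X$ of the bundle itself. Because $P \dslash G \to X$ is a principal $\infty$-bundle, Proposition~\ref{st:pfbun characterisation} shows that $p$ is an effective epimorphism, so $U = P \to X$ is an admissible choice. By Proposition~\ref{st:pb bun construction}, the pullback $P \times_X (P \dslash G)$ is then a $G$-principal $\infty$-bundle over $P$; by construction its total space is $P \times_X P$ and its augmentation is the first projection $\pr_1 \colon P \times_X P \to P$. Everything thus reduces to showing that this pullback bundle is trivial, i.e.~equivalent in $\Bun_G(P)$ to $P \times (G \dslash G) \to P$.

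The geometric idea is that $\pr_1 \colon P \times_X P \to P$ carries the canonical diagonal section, and a principal bundle admitting a section is trivial; to make this precise I would argue through \v{C}ech nerves. Since both bundles are principal, each is equivalent, as an augmented simplicial object, to the \v{C}ech nerve of its augmentation: the pullback bundle is equivalent to $\cC(\pr_1 \colon P \times_X P \to P)$, while the trivial bundle $P \times (G \dslash G)$ is equivalent to $\cC(\pr_P \colon P \times G \to P)$ (both sides have level-$n$ object $P \times G^{n+1}$). The principality of the action now enters decisively: by Lemma~\ref{st:principality condition} the square~\eqref{eq:principality at level 1} is a pullback, which produces a canonical equivalence $\phi \colon P \times G \eq P \times_X P$ over $P$, compatible with the two augmentations $\pr_P$ and $\pr_1$. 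Viewing $\phi$ as an equivalence in $\Fun(\Delta^1, \bH)$ and applying the functorial \v{C}ech-nerve construction then yields an equivalence $\cC(\phi) \colon \cC(\pr_P) \eq \cC(\pr_1)$, hence an equivalence between the underlying groupoid objects of the trivial and the pullback bundles.

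The main obstacle is coherence, in two guises. First, I must verify that $\phi$ really is an equivalence over $P$ identifying $\pr_P$ with $\pr_1$---that is, pin down which of the two projections of $P \times_X P$ corresponds to $\pr_P$ under the principality square---which is a routine unwinding of the face maps of $\cC p$. Second, and more seriously, I must promote $\cC(\phi)$ to a morphism in $\Bun_G(P)$: it must commute with the canonical projections to the trivial action $* \dslash G$ that encode the $G$-action (Definition~\ref{def:trivial bundle}). This is exactly the objectwise-to-coherent upgrade handled by Lemma~\ref{st:equiv on objs pulls back functoriality}, and it succeeds because $\phi$ is assembled from the action maps $d_0, d_1$ of $P \dslash G$ themselves, so that at each simplicial level $\cC(\phi)$ is, up to the coherent identifications, of the form $\phi \times 1_{G^n}$ and therefore commutes with the collapse onto $G^n = \wG_n$. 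Since both bundles are groupoid objects by Theorem~\ref{st:grp actions are gpd obs}, Lemma~\ref{st:equivs of Gpd and Grp objects}(1) further lets me reduce any leftover equivalence-checking to levels $0$ and $1$. Once this coherence is in place, the pullback bundle is equivalent to $P \times (G \dslash G)$ over $P$, hence trivial, completing the proof.
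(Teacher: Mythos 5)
Your skeleton is the same as the paper's: the same cover $U = P$, justified via Proposition~\ref{st:pfbun characterisation}, the same pullback bundle via Proposition~\ref{st:pb bun construction}, and the same decisive input, namely that the principality square of Lemma~\ref{st:principality condition} supplies the equivalence $\phi \colon P \times G \eq P \times_X P$ at level zero of the two \v{C}ech nerves. The difference lies in the order in which you produce the comparison map, and this is where your argument has a genuine soft spot. You first Kan-extend the level-zero equivalence $\phi$ to $\cC(\phi)$ and then try to retrofit the $G$-equivariance, citing Lemma~\ref{st:equiv on objs pulls back functoriality}. That lemma solves a different problem: it lifts an objectwise family of objects and equivalences on $\obj(K)$ to an essentially unique functor on $K$ together with a natural equivalence. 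It does not, for an \emph{already given} morphism of simplicial objects, produce the homotopy filling the triangle over $* \dslash G$ that Definition~\ref{def:trivial bundle} demands. So as written, your second coherence step is not justified by the tool you invoke, even though the objectwise statement (``each level is $\phi \times 1_{G^n}$ up to coherent identifications'') is morally correct.

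The paper discharges this coherence by construction rather than a posteriori: it defines a morphism $\psi \colon P \times (G \dslash G) \to P \times_X (P \dslash G)$ directly in $\Fun(\bbDelta^\opp, \bH)$ via the universal property of the pullback, with legs the projection to $\sfc P$ and the map $a \times 1 \colon P \times (G \dslash G) \to P \dslash G$ (act on $P$ with the first $G$-coordinate, identity on the remaining copies). Since $a \times 1$ is a morphism of $G$-actions, $\psi$ is \emph{automatically} a morphism of $G$-principal $\infty$-bundles, so no separate equivariance check is needed; and since both sides are \v{C}ech nerves of their augmentations, $\psi$ is the image under $\Ran_\iota$ of its restriction to $\bbDelta_{+,\leq 0}^\opp$, whose level-zero component is exactly your $\phi$. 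Functoriality of $\Ran_\iota$ then forces $\psi$ to be an equivalence. Your proof becomes complete if you replace the step ``Kan-extend $\phi$, then verify compatibility with $* \dslash G$'' by this construction --- the intuition you articulate (the diagonal section trivialises, levelwise the map is $\phi \times 1_{G^n}$) is precisely what the map $a \times 1$ encodes, but building the map equivariantly from the start is what makes the coherence come for free.
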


\begin{proof}
We give an alternative proof to~\cite{NSS:oo-bundles}.
Given a $G$-principal $\infty$-bundle $P \dslash G \to B$ in $\bH$, consider the effective epimorphism $P \to B$ and the pullback bundle $P \times_B (P \dslash G)$.
We have a commutative diagram
\begin{equation}
\begin{tikzcd}[column sep=1cm, row sep=1cm]
	P \times (G_1 \dslash G) \ar[ddr, bend right=25, "\pr"'] \ar[rrd, bend right=-25, "a \times 1"] \ar[dr, dashed, "\psi" description]& &
	\\
	& P \times_B (P \dslash G) \ar[r] \ar[d] & P \dslash G \ar[d]
	\\
	& \sfc P \ar[r] & \sfc B
\end{tikzcd}
\end{equation}
in $\Fun(\bbDelta^\opp, \bH)$, where $a \times 1$ acts on $P$ with the first copy of $G$ and as the identity on the remaining copies of $G$.
The induced morphism $\psi$ is a morphism of $G$-principal $\infty$-bundles (since the triangles in the diagram commute and since $a \times 1$ is a morphism of $G$-actions).
It is thus equivalent to a morphism
\begin{equation}
	\psi' \colon \cC(P \times G \to P) \longrightarrow \cC \big( (P \times_B (P \dslash G)) \longrightarrow P \big)
\end{equation}
of \v{C}ech nerves over $P$.
The level-zero component of $\psi'$ is precisely the equivalence $P \times G_1 \to P \times_B P$ which establishes that $P \dslash G \to B$ is principal (cf.~Proposition~\ref{st:principality condition}).
Since $\psi'$ is the image of $\psi$ under the right Kan extension $\Ran_\iota$ (compare Definition~\ref{def:Cech nerve}), it follows that $\psi'$, and hence $\psi$, is an equivalence.
\end{proof}

\begin{proposition}
\label{st:each G-Bun is pb bun}
For every $G$-principal $\infty$-bundle $P \dslash G \to B$ in $\bH$, the diagram
\begin{equation}
\label{eq:P//G as pb}
\begin{tikzcd}
	P \dslash G \ar[r, "p"] \ar[d] & * \dslash G \ar[d]
	\\
	\sfc B \ar[r, "|p|"'] & \sfc \rmB G
\end{tikzcd}
\end{equation}
is a pullback diagram in $\Fun(\bbDelta^\opp, \bH)$:
there is an equivalence $(P \dslash G \to B) \simeq B \times_{\rmB G} (* \dslash G)$ of $G$-principal $\infty$-bundles over $X$.
In particular, every $G$-principal $\infty$-bundle is a pullback of the bundle $* \dslash G \to \rmB G$.
\end{proposition}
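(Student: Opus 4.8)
The plan is to take the classifying map to be $|p| \colon X \to \rmB G$, the geometric realisation of the morphism of actions $p \colon P \dslash G \to * \dslash G$, where we use the equivalence $|P \dslash G| \simeq X$ coming from principality (Definition~\ref{def:principal oo-bundle}). The morphism $p$ and the colimiting cocone $P \dslash G \to \sfc X$ are compatible over $\sfc \rmB G$, so by the universal property of the pullback they induce a canonical comparison morphism
\begin{equation}
	\Phi \colon P \dslash G \longrightarrow \sfc X \times_{\sfc \rmB G} (* \dslash G)
\end{equation}
of $G$-actions over $X$, whose target I denote by $|p|^*(* \dslash G)$. By Proposition~\ref{st:pb bun construction} the target is a $G$-principal $\infty$-bundle on $X$, and the diagram in the statement is a pullback in $\Fun(\bbDelta^\opp, \bH)$ precisely when $\Phi$ is an equivalence. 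So the whole proof reduces to showing that $\Phi$ is an equivalence.

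Both $P \dslash G$ and $|p|^*(* \dslash G)$ are principal $G$-bundles over $X$; by principality (Lemma~\ref{st:principality condition}) each is the \v{C}ech nerve of its augmentation, and a morphism between \v{C}ech nerves over $X$ is the image under the right Kan extension $\Ran_\imath$ of its restriction to level $0$, exactly as in the proof of Lemma~\ref{st:trivial bundle over pb}. Hence $\Phi$ is an equivalence if and only if its level-$0$ component $\Phi_0 \colon P \to X \times_{\rmB G} *$ is an equivalence in $\bH$.

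To check this I would pull $\Phi_0$ back along $p \colon P \to X$, which is an effective epimorphism by Proposition~\ref{st:pfbun characterisation}. On the source, principality gives $p^*P = P \times_X P \simeq P \times G$. On the target, the composite $|p| \circ p \colon P \to \rmB G$ equals the base point $* \to \rmB G$ precomposed with $P \to *$ (this is the level-$0$ instance of the identity $\sfc(|p|) \circ (\text{cocone}) = (\text{aug}) \circ p$); by the pasting law together with Lemma~\ref{st:G//G as pb Bun} this identifies $p^*(X \times_{\rmB G} *) = P \times_{\rmB G} *$ with $P \times G$ again. Conceptually, since the comparison morphism is natural under pullback of bundles, $p^*\Phi$ is the comparison morphism of the bundle $p^*(P\dslash G)$, which is trivial by Lemma~\ref{st:trivial bundle over pb}; and on a trivial bundle $Y \times (G \dslash G)$ the classifying map factors through the base point, so Lemma~\ref{st:G//G as pb Bun} identifies its classifying pullback with $Y \times (G \dslash G)$ and the comparison morphism with the identity. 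Thus $p^*\Phi$, and in particular $p^*\Phi_0$, is an equivalence.

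It remains to descend this along $p$, which is formal. Writing $Q \coloneqq X \times_{\rmB G} *$ and viewing $P, Q$ as objects over $X$, the fact that $p$ is an effective epimorphism gives $X \simeq |\cC p|$, so universality of colimits (Definition~\ref{def:oo-topos}(2)) yields $P \simeq |\sfc P \times_{\sfc X} \cC p|$ and $Q \simeq |\sfc Q \times_{\sfc X} \cC p|$, with $\Phi_0$ realising the levelwise maps $P \times_X (\cC p)_n \to Q \times_X (\cC p)_n$. Each map $(\cC p)_n \to X$ factors through $p$, so each of these is a pullback of the equivalence $p^*\Phi_0$ and hence an equivalence; passing to realisations shows $\Phi_0$ is an equivalence, and therefore so is $\Phi$ (using Lemma~\ref{st:equivs of Gpd and Grp objects} for the passage from level $0$). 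I expect the main obstacle to be the middle step, namely verifying carefully that the comparison morphism is natural under pullback and restricts to an equivalence over the trivialising cover — that is, getting the local triviality of Lemma~\ref{st:trivial bundle over pb} to interact correctly with the universal bundle via Lemma~\ref{st:G//G as pb Bun}. Once that is in place, the final descent is a direct consequence of the $\infty$-topos axioms.
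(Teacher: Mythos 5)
Your proposal is correct, and it proves the key step by a genuinely different route than the paper. Both arguments share the same endgame: since source and target of the comparison morphism are principal bundles, hence \v{C}ech nerves of their augmentations, the comparison is the image under $\Ran_\imath$ of its level-$0$ restriction, so the whole question reduces to the map $\Phi_0 \colon P \to X \times_{\rmB G} *$ being an equivalence --- this is exactly the paper's manoeuvre with $\varphi$ and $\varphi'_0$ at the end of its proof. Where you diverge is in how that level-$0$ equivalence is established. The paper assembles the cube~\eqref{eq:arb PBun as pb}, whose front and back faces are supplied by Lemmas~\ref{st:trivial bundle over pb} and~\ref{st:G//G as pb Bun}; it proves the top face is a pullback via a shear map whose invertibility uses inversion in $\wG$ (Proposition~\ref{st:Grpd obs via horns}, Lemma~\ref{st:c is ff and Px(-) pres contr lims}), and the bottom face via the realisation computation $* \times_{\rmB G} X \simeq \big|P \times (G \dslash G)\big| \simeq P$, from which $\Phi_0$ being an equivalence falls out without any descent. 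You instead run the classical ``check locally, then descend'' pattern: pull $\Phi$ back along the effective epimorphism $p$ (Proposition~\ref{st:pfbun characterisation}), identify $p^*\Phi$ --- by naturality of the comparison under pasting of pullbacks --- with the comparison morphism of the trivial bundle $P \times (G \dslash G)$ of Lemma~\ref{st:trivial bundle over pb}, whose classifying map is canonically null via $|G \dslash G| \simeq *$ so that Lemma~\ref{st:G//G as pb Bun} and the pasting law exhibit its comparison as an equivalence, and then glue using universality of colimits (Definition~\ref{def:oo-topos}(2)) and the fact that each $(\cC p)_n \to X$ factors through $p$; the final realisation of levelwise equivalences is sound. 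The coherence check you flag --- matching the null-homotopy of the trivial bundle's classifying map with the identification $\sfc P \times_{\sfc \rmB G} (* \dslash G) \simeq P \times (G \dslash G)$ --- is genuine but routine, and is harmless even in the worst case: a different null-homotopy changes the identification only by an automorphism of the trivial bundle, so $p^*\Phi_0$ is an equivalence regardless. What each approach buys: the paper's cube makes the auxiliary identifications (notably $P \simeq X \times_{\rmB G} *$) fully explicit and avoids descent; your version is more modular and conceptually reusable (it is the standard local-triviality-plus-descent argument for ordinary principal bundles transported to the $\infty$-topos), at the cost of leaning harder on Lemma~\ref{st:trivial bundle over pb}, which quietly absorbs the shear-map/inversion content that the paper handles separately in its top-face argument.
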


This is a refinement of~\cite[Prop.~3.13]{NSS:oo-bundles} to a statement on the level of simplicial objects, rather than only on their zeroth level.

\begin{proof}
Consider the diagram
\begin{equation}
\label{eq:arb PBun as pb}
\begin{tikzcd}[column sep={1.75cm,between origins}, row sep={1.25cm,between origins}]
	& G_1 \dslash G \ar[dd] \ar[rr]
	& & * \dslash G \ar[dd]
	\\
	 P \times (G_1 \dslash G) \ar[dd] \ar[rr, crossing over] \ar[ur]
	& & P \dslash G \ar[ur]
	&
	\\
	& * \ar[rr]
	& & \sfc \rmB G
	\\
	\sfc P \ar[rr] \ar[ur]
	& & \sfc B \ar[from=uu, crossing over] \ar[ur] &
\end{tikzcd}
\end{equation}
in $\Fun(\bbDelta^\opp, \bH)$.
Here, the front and rear squares are pullbacks (by Lemmas~\ref{st:trivial bundle over pb} and \ref{st:G//G as pb Bun}, respectively), and the diagram is obtained as a morphism of pullback diagrams.
We need to show that the right-hand face is a pullback square in $\Fun(\bbDelta^\opp, \bH)$.

First, we show that the top square of~\eqref{eq:arb PBun as pb} is a pullback.
By Lemma~\ref{st:lims in Gpd and Grp objects} it suffices to check this level-wise:
at simplicial level $n= 0$, it is trivial.
For $n \in \NN$, the square consists of the the image under the functor $(-) \times G_1^{n-1}$ of the diagram
\begin{equation}
\label{eq:modified product PxG}
\begin{tikzcd}
	P \times G_1 \ar[r, "a"] \ar[d, "\pr_1"'] & P \ar[d]
	\\
	G_1 \ar[r] & *
\end{tikzcd}
\end{equation}
This a pullback diagram:
there is a commutative diagram
\begin{equation}
\begin{tikzcd}
	P \times G_1 \ar[ddr, bend right=20, "\pr_1"'] \ar[rrd, bend right=-20, "\pr_0"] \ar[dr, dashed, "g" description]& &
	\\
	& P \times G_1 \ar[r, "a"] \ar[d, "\pr_1"'] & P \ar[d]
	\\
	& G_1 \ar[r] & *
\end{tikzcd}
\end{equation}
in which the dashed morphism is given by the composition
\begin{equation}
	g = (a \times 1_{G_1}) \circ (1_P \times \inv \times 1_{G_1}) \circ (1_P \times \Delta_{G_1})\,,
\end{equation}
where $\Delta_{G_1} \colon G_1 \to G_1^2$ is the diagonal morphism, and where $\inv \colon G_1 \to G_1$ is the choice of an inverse in $G$:
since the group object $G \in \Grp(\bH)$ is in particular a groupoid object, we have a diagram
\begin{equation}
\begin{tikzcd}
	G_1 \simeq G_1 \times * \ar[r]
	& G_1 \times G_1 \simeq G(\Lambda^2_0)
	\ar[from=r, "\simeq"']
	& G_2 \ar[r, "d_0"]
	& G_1\,,
\end{tikzcd}
\end{equation}
where we have used the characterisation of groupoid objects as certain category objects from Proposition~\ref{st:Grpd obs via horns}.
Choosing an inverse for the right-facing morphism defines the morphism $\inv$.

Since $g$ is an equivalence (because $G$ is a group object), diagram~\eqref{eq:modified product PxG} is a pullback in $\bH$, and since the span category $\{0,1\} \leftarrow \{0\} \rightarrow \{0,2\}$ has contractible nerve, the pullback~\eqref{eq:modified product PxG} is preserved by $(-) \times G_1^{n-1}$ (see Lemma~\ref{st:c is ff and Px(-) pres contr lims}).
We thus obtain that the top square in diagram~\eqref{eq:arb PBun as pb} is a pullback.

Next, we prove that the bottom square of~\eqref{eq:arb PBun as pb} is a pullback.
We define $C \coloneqq * \times_{\rmB G} B \in \bH$, and we consider the diagram (omitting constant-diagram functors)
\begin{equation}
\begin{tikzcd}[column sep={3cm,between origins}]
	C \times_B (P \dslash G) \ar[r] \ar[d] & C \ar[r] \ar[d] & * \ar[d]
	\\
	P \dslash G \ar[r] & B \ar[r] & \rmB G
\end{tikzcd}
\end{equation}
Both squares in this diagram are pullbacks in $\Fun(\bbDelta^\opp, \bH)$, so that the pasting law yields a canonical equivalence of simplicial objects
\begin{equation}
	C \underset{B}{\times} (P \dslash G) \simeq * \underset{\rmB G}{\times} (P \dslash G)\,.
\end{equation}
Observe that $C \times_B (P \dslash G) \to C$ is a $G$-principal $\infty$-bundle by Proposition~\ref{st:pb bun construction}, so that
\begin{equation}
	C \simeq \big| C \underset{B}{\times} (P \dslash G) \big|
	\simeq \big| * \underset{\rmB G}{\times} (P \dslash G) \big|\,.
\end{equation}
Now we use that the morphism $P \dslash G \to \rmB G$ factors through $* \dslash G$ (by Definitions~\ref{def:oo-group action} and~\ref{def:principal oo-bundle}) and that $* \times_{\rmB G} (* \dslash G) \simeq G_1 \dslash G$ (by Lemma~\ref{st:G//G as pb Bun}).
Applying the pasting law to the diagram
\begin{equation}
\begin{tikzcd}[column sep={3.75cm,between origins}]
	(P \dslash G) \underset{(* \dslash G)}{\times} (G_1 \dslash G) \ar[r] \ar[d] & G_1 \dslash G \ar[r] \ar[d] & * \ar[d]
	\\
	P \dslash G \ar[r] & * \dslash G \ar[r] & \rmB G
\end{tikzcd}
\end{equation}
in $\Fun(\bbDelta^\opp, \bH)$, in which both squares are pullbacks, we obtain a canonical equivalence
\begin{equation}
	* \underset{\rmB G}{\times} (P \dslash G)
	\simeq (P \dslash G) \underset{(* \dslash G)}{\times} (G_1 \dslash G)\,.
\end{equation}
The right-hand side is precisely the pullback described by the top square in diagram~\eqref{eq:arb PBun as pb}.
Since we already know that the top square of~\eqref{eq:arb PBun as pb} is cartesian, we obtain an equivalence
\begin{equation}
	(P \dslash G) \underset{(* \dslash G)}{\times} (G_1 \dslash G)
	\simeq P \times (G_1 \dslash G)
\end{equation}
in $\Fun(\bbDelta^\opp, \bH)$.
Thus, it follows that
\begin{equation}
	C \simeq \big| * \underset{\rmB G}{\times} (P \dslash G) \big|
	\simeq \big| P \times (G_1 \dslash G) \big|
	\simeq P\,.
\end{equation}
The last equivalence can be seen either by combining Proposition~\ref{st:|G//G| = *} with the fact that $|{-}|$ preserves finite products (because $\bbDelta^\opp$ is sifted~\cite[Lemma~5.5.8.4]{Lurie:HTT}), or simply by recalling that $P \times (G_1 \dslash G) \to P$ is a $G$-principal $\infty$-bundle on $P$.
This shows that the bottom square in~\eqref{eq:arb PBun as pb} is a pullback.

Finally, we prove that the right-hand square in~\eqref{eq:arb PBun as pb} is a pullback as well.
Consider the commutative diagram of solid arrows
\begin{equation}
\begin{tikzcd}
	P \dslash G \ar[ddr, bend right=20] \ar[rrd, bend right=-20] \ar[dr, dashed, "\varphi" description]& &
	\\
	& B \underset{\rmB G}{\times} (* \dslash G) \ar[r,] \ar[d] & * \dslash G \ar[d]
	\\
	& B \ar[r] & \rmB G
\end{tikzcd}
\end{equation}
which induces an essentially unique morphism $\varphi$ of simplicial objects in $\bH$.
By the commutativity of the right-hand triangle in this diagram, $\varphi$ is even a morphism of $G$-actions.
By the commutativity of the left-hand triangle it is even a morphism of $G$-actions over $B$.
Since its source and target are $G$-principal $\infty$-bundles, $\varphi$ is equivalent to a morphism of \v{C}ech nerves
\begin{equation}
	\varphi' \colon \cC(P \to B) \longrightarrow \cC \big( (B \underset{\rmB G}{\times} *) \to B \big)\,.
\end{equation}
That is, $\varphi'$ is the image under $\Ran_\iota$ (compare Definition~\ref{def:Cech nerve}) of the square
\begin{equation}
\begin{tikzcd}
	P \ar[r, "\varphi'_0"] \ar[d] & B \underset{\rmB G}{\times} * \ar[d]
	\\
	B \ar[r, equal] & B
\end{tikzcd}
\end{equation}
This $\varphi'_0$ is an equivalence since the bottom square of~\eqref{eq:arb PBun as pb} is a pullback.
Consequently, the morphism $\varphi$ is an equivalence in $\Fun(\bbDelta^\opp, \bH)$, and thus the right-hand face in~\eqref{eq:arb PBun as pb} is a pullback.
\end{proof}

\begin{corollary}
\label{st:fib pf P//G to X is G}
Let $P \dslash G \to B$ be a $G$-principal $\infty$-bundle in $\bH$.
For any morphism $x \colon * \to B$, we have a pullback diagram
\begin{equation}
\begin{tikzcd}
	G_1 \dslash G \ar[r] \ar[d] & P \dslash G \ar[d]
	\\
	* \ar[r, "x"'] & B
\end{tikzcd}
\end{equation}
in $\Fun(\bbDelta^\opp, \bH)$.
In particular, any fibre of $P \to B$ is canonically equivalent to $G_1$ in $\bH$.
\end{corollary}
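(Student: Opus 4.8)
The plan is to realise the fibre as an iterated pullback of the universal bundle $(* \dslash G) \to \rmB G$ and then to use connectivity of $\rmB G$ to trivialise the restriction to the chosen point. By Proposition~\ref{st:each G-Bun is pb bun}, the bundle $P \dslash G \to X$ sits in a pullback square in $\Fun(\bbDelta^\opp, \bH)$ whose right-hand edge is $(* \dslash G) \to \sfc \rmB G$ and whose bottom edge is the classifying morphism $\sfc(|p|) \colon \sfc X \to \sfc \rmB G$. First I would paste the constant square determined by $\sfc x \colon \sfc * \to \sfc X$ onto the bottom of this square and apply the pasting law for pullbacks in $\Fun(\bbDelta^\opp, \bH)$ (where limits are computed levelwise, cf.\ Lemma~\ref{st:lims in Gpd and Grp objects}). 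This produces a canonical equivalence
\begin{equation}
  \sfc * \underset{\sfc X}{\times} (P \dslash G)
  \simeq \sfc * \underset{\sfc \rmB G}{\times} (* \dslash G)
\end{equation}
of simplicial objects in $\bH$, where the map $\sfc * \to \sfc \rmB G$ on the right is induced by the composite $|p| \circ x \colon * \to \rmB G$.

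The key step is then to identify $|p| \circ x$ with the basepoint of $\rmB G$. Since $\rmB G$ is a connected object --- its defining colimit exhibits $* \simeq G_0 \to \rmB G$ as an effective epimorphism --- the morphism $|p| \circ x$ is equivalent, up to contractible choice, to the basepoint $* \to \rmB G$. Under this identification the right-hand side above becomes the pullback of $(* \dslash G) \to \rmB G$ along the basepoint, which by Lemma~\ref{st:G//G as pb Bun} is precisely $G \dslash G$. Composing the two equivalences yields the asserted pullback square $\sfc * \times_{\sfc X}(P \dslash G) \simeq G \dslash G$.

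For the final sentence, the fibre of $p \colon P \to X$ over $x$ is the object obtained from this square at simplicial degree zero, namely $(\sfc * \times_{\sfc X}(P \dslash G))_0 = * \times_X P$. Evaluating the equivalence just constructed at degree zero and using $(G \dslash G)_0 \simeq G$ gives the desired canonical equivalence $* \times_X P \simeq G$ in $\bH$.

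I expect the connectivity step to be the main obstacle. Asserting that the composite $|p| \circ x$ agrees with the basepoint is immediate when $\bH = \bS$, where $\rmB G$ is path-connected so all global points coincide, and more generally in the presheaf $\infty$-topoi that are relevant here, where $\rmB G$ is computed objectwise and $\Gamma \rmB G \simeq \rmB(G(*))$ is a connected space. An alternative, and perhaps cleaner, route that isolates the same issue is to apply Proposition~\ref{st:pb bun construction} directly: it shows that $\sfc * \times_{\sfc X}(P \dslash G) \to *$ is a principal $G$-bundle over the point (cf.\ Definition~\ref{def:principal oo-bundle}), whereupon one argues that such a bundle is necessarily the trivial bundle $G \dslash G$.
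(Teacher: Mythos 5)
Your route --- pull the classifying square of Proposition~\ref{st:each G-Bun is pb bun} back along $x$, apply the pasting law, and then invoke Lemma~\ref{st:G//G as pb Bun} --- is precisely the argument the paper leaves implicit, and everything up to your ``key step'' is correct. But that key step contains a genuine gap: internal connectivity of $\rmB G$ does \emph{not} imply that every global point of $\rmB G$ is homotopic to the basepoint. The effective epimorphism $* \to \rmB G$ expresses $0$-connectivity of the \emph{object} $\rmB G$ inside $\bH$, whereas what you need is connectivity of the mapping \emph{space} $\ul{\bH}(*, \rmB G) \simeq \Gamma(\rmB G)$, and in a general $\infty$-topos these diverge: $\pi_0 \Gamma(\rmB G)$ classifies $G$-principal $\infty$-bundles over the terminal object (Remark~\ref{rmk:Bun_G(X) = H(X,BG)}) and is typically nontrivial. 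For instance, in sheaves of spaces on the circle with $G$ the constant sheaf of groups $\ZN$, one has $\pi_0 \Gamma(\rmB G) \cong \rmH^1(S^1;\ZN) \cong \ZN$; the fibre of $* \dslash G \to \rmB G$ over a point classifying a nontrivial torsor is that torsor, which is not equivalent to $\ZN$ in $\bH$ because it admits no global section. The same phenomenon sinks your proposed alternative ending: a $G$-principal $\infty$-bundle over $*$ is classified by an arbitrary global point of $\rmB G$ and need not be trivial. Note also that even when an identification of $|p| \circ x$ with the basepoint exists, it is a choice --- the space of such identifications is a torsor over $\Omega \rmB G \simeq G$, not contractible --- so the equivalence of fibres obtained this way is in any case not canonical, contrary to the wording of the corollary.

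What your closing remarks do correctly establish is the statement for $\bH = \bS$ and for $\bH_\infty$, where $\Gamma$ is evaluation at the terminal cartesian space and $\Gamma(\rmB G) \simeq \rmB \big( G(*) \big)$ is connected; these are the ambient topoi in which the paper actually applies the corollary, apart from the implication $(4) \Rightarrow (3)$ of Theorem~\ref{st:oo-grp ext via pfbuns}, where the relevant point is the unit $e \colon * \to H$. A repair that works in any $\infty$-topos, and produces a genuinely canonical pullback square of simplicial objects, is to use a lift instead of connectivity: whenever $x$ admits a lift $\tilde{x} \colon * \to P$ along the effective epimorphism $p$ (as the unit of $G$ lifts the unit of $H$ in the group-extension application), pulling the trivialisation $P \times (G \dslash G) \simeq \sfc P \times_{\sfc X} (P \dslash G)$ of Lemma~\ref{st:trivial bundle over pb} back along $\tilde{x}$ yields $G \dslash G \simeq \sfc * \times_{\sfc X} (P \dslash G)$ directly, with no classifying maps and no connectivity input. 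For an arbitrary global point of $X$ in an arbitrary $\infty$-topos the fibre is in general only a $G$-torsor over $*$ --- locally, but not globally, equivalent to $G$ --- so a hypothesis of this kind (or the restriction to the topoi above) is genuinely needed, and your instinct that the connectivity step is ``the main obstacle'' was exactly right.
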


\begin{remark}
\label{rmk:Bun_G(X) = H(X,BG)}
For any group object $G$ in $\bH$ and any object $B \in \bH$, there is an equivalence
\begin{equation}
	\Bun_G(B) \simeq \ul{\bH}(B, \rmB G)
\end{equation}
between the $\infty$-category of $G$-principal $\infty$-bundles on $B$ and the mapping space $\ul{\bH}(B, \rmB G)$~\cite[Thm.~3.17]{NSS:oo-bundles}.
This implies that every morphism of principal $G$-bundles on $B$ is an equivalence.
Proposition~\ref{st:each G-Bun is pb bun} feeds into the proof of this equivalence by showing that the functor $\ul{\bH}(B, \rmB G) \to \Bun_G(B)$, sending a morphism $B \to \rmB G$ to the principal $\infty$-bundle $B \times_{\rmB G} (* \dslash G)$, is essentially surjective.
\qen
\end{remark}

In particular, under the equivalence of Remark~\ref{rmk:Bun_G(X) = H(X,BG)}, the morphism $|p| \colon B \to \rmB G$ in diagram~\eqref{eq:P//G as pb} is a classifying morphism for the bundle $P \dslash G \to B$.

\begin{proposition}
\label{st:classifying morphisms under L}
Let $\rmL \colon \bH \to \bH'$ be a functor of $\infty$-topoi which preserves finite products and geometric realisations.
If $P \dslash G \to B$ is a $G$-principal $\infty$-bundle in $\bH$, classified (up to canonical equivalence) by a morphism $|p| \colon B \to \rmB G$, then the $\rmL G$-principal $\infty$-bundle $\rmL P \dslash \rmL G \longrightarrow \rmL B$ (compare Theorem~\ref{st:L pres (princ) actions}) in $\bH'$ is classified by the morphism $|\rmL p| \simeq \rmL |p|$.
\end{proposition}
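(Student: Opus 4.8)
The plan is to apply the classifying-map characterisation of Proposition~\ref{st:each G-Bun is pb bun} inside $\bH'$ to the pushforward bundle and then to match the outcome with $\rmL$ applied to the data in $\bH$, using only that $\rmL$ preserves finite products and geometric realisations. Throughout, write $p$ for the canonical morphism of simplicial objects $P \dslash G \to * \dslash G$ supplied by axiom~(3) of Definition~\ref{def:oo-group action}; as recalled after Remark~\ref{rmk:Bun_G(X) = H(X,BG)}, the classifying map of $P \dslash G \to X$ is the geometric realisation $|p| \colon X \to \rmB G$, read through the equivalences $|P \dslash G| \simeq X$ and $|* \dslash G| \simeq \rmB G$.

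First I would identify the structure morphism of the pushforward bundle with $\rmL p$. By Theorem~\ref{st:L pres (princ) actions}, $\rmL P \dslash \rmL G \to \rmL X$ is an $\rmL G$-principal $\infty$-bundle; since $\rmL$ preserves finite products, its levels $\rmL P \times \rmL G^n$ agree with $\rmL\big( (P \dslash G)_n \big)$, giving a canonical equivalence $\rmL(P \dslash G) \simeq \rmL P \dslash \rmL G$ in $\Fun(\bbDelta^\opp, \bH')$, and likewise $\rmL(* \dslash G) \simeq * \dslash \rmL G$ (using that $\rmL$ preserves the final object). Because the axiom-(3) morphism is induced by the collapse map $P \to *$, which $\rmL$ takes to $\rmL P \to *$, applying $\rmL$ to $p$ and transporting along these equivalences produces exactly the canonical morphism $\rmL P \dslash \rmL G \to * \dslash \rmL G$ of the pushforward action. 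If one wishes to be fully precise, both morphisms restrict to the same levelwise data over $\obj(\bbDelta^\opp)$, and such a lift to $\Fun(\bbDelta^\opp, \bH')$ is essentially unique by Lemma~\ref{st:equiv on objs pulls back functoriality}, so they agree up to a contractible space of choices.

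Next I would apply Proposition~\ref{st:each G-Bun is pb bun} to $\rmL P \dslash \rmL G \to \rmL X$ in $\bH'$: its classifying map is the realisation of the canonical morphism to $* \dslash \rmL G$, which by the previous step is $|\rmL p|$, a morphism $\rmL X \to \rmB(\rmL G)$. This already proves that the pushforward is classified by $|\rmL p|$. It remains to exhibit $|\rmL p| \simeq \rmL|p|$. For this I would invoke that $\rmL$ preserves geometric realisations: the comparison transformation $\colim_{\bbDelta^\opp} \circ\, \rmL \Rightarrow \rmL \circ \colim_{\bbDelta^\opp}$ is an equivalence and is natural in morphisms of simplicial objects, so evaluating it on $p$ yields a commuting square with vertical equivalences identifying $|\rmL p|$ with $\rmL|p|$. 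The domain identification is $|\rmL(P \dslash G)| \simeq \rmL|P \dslash G| \simeq \rmL X$, and the codomain identification is $|\rmL(* \dslash G)| \simeq \rmL|* \dslash G| \simeq \rmL\,\rmB G \simeq \rmB(\rmL G)$, the final step being Proposition~\ref{st:preservation of Grp and B}(2).

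The one point demanding care --- and the main obstacle --- is the identification in the second step: one must resist applying $\rmL$ directly to the pullback square~\eqref{eq:P//G as pb}, since $\rmL$ is not assumed to preserve pullbacks. Working instead with the action structure morphisms, which are assembled purely from finite products and the collapse morphism $P \to *$ that $\rmL$ does preserve, and appealing to the essential uniqueness of functorial lifts in Lemma~\ref{st:equiv on objs pulls back functoriality}, sidesteps this issue; the remaining steps are the formal naturality of the realisation comparison together with Proposition~\ref{st:preservation of Grp and B}(2).
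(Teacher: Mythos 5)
Your proposal is correct and follows essentially the same route as the paper: both apply Proposition~\ref{st:each G-Bun is pb bun} in $\bH'$ to the pushforward bundle to witness its classifying map as $|q| \simeq |\rmL p|$, identify the structure morphisms via the levelwise product-preservation equivalences $\rmL(P \dslash G) \simeq \rmL P \dslash \rmL G$ and $\rmL(* \dslash G) \simeq * \dslash \rmL G$, and then use the natural equivalence $\rmL \circ |{-}| \simeq |{-}| \circ \rmL$ together with Proposition~\ref{st:preservation of Grp and B}(2) to conclude $|\rmL p| \simeq \rmL|p|$. Your cautionary remark about not applying $\rmL$ to the pullback square~\eqref{eq:P//G as pb} is exactly the point implicit in the paper's cube argument, which likewise establishes the pullback property of the front face directly in $\bH'$ rather than by pushing the pullback through $\rmL$.
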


\begin{proof}
Consider the commutative diagram
\begin{equation}
\begin{tikzcd}[column sep=0.5cm, row sep=0.5cm]
	& \rmL (P \dslash G) \ar[dd] \ar[rr, "\rmL p"]
	& & \rmL(* \dslash G) \ar[dd]
	\\
	\rmL P \dslash \rmL G \ar[dd] \ar[rr, crossing over, "q"' pos=0.25] \ar[ur, "\simeq"]
	& & * \dslash \rmL G \ar[ur, "\simeq"']
	&
	\\
	& \rmL |P \dslash G| \ar[rr, "\rmL |p|" pos=0.25]
	& & \rmL \rmB G
	\\
	{|\rmL P \dslash \rmL G|} \ar[rr, "|q|"'] \ar[ur, "\simeq"]
	& & \rmB \rmL G \ar[from=uu, crossing over] \ar[ur, "\simeq"'] &
\end{tikzcd}
\end{equation}
The morphism $q$ is the canonical morphism induced from the collapse morphism $\rmL P \to *$.
By Proposition~\ref{st:each G-Bun is pb bun}, the front face of this diagram is a pullback in $\bH'$, witnessing $|q|$ as the classifying morphism $\rmL X \to \rmB \rmL G$ of the bundle $\rmL P \dslash \rmL G \longrightarrow \rmL X$.
Since all diagonal morphisms are equivalences, the rear face of the diagram is a pullback as well, showing that $\rmL |p|$ is a classifying morphism for the $\rmL G$-principal $\infty$-bundle $\rmL (P \dslash G) \longrightarrow \rmL B$, which is equivalent to the bundle $\rmL P \dslash \rmL G \longrightarrow \rmL B$.
Finally, since the diagonal morphisms arise from the natural equivalences $\rmL \circ |{-}| \simeq |{-}| \circ \rmL$, it follows that $|q| \simeq |\rmL p|$.
\end{proof}

We now state several alternative characterisations of group extensions in $\infty$-topoi.
These clarify the relation between the original notion of an extension of group objects from Definition~\ref{def:oo-grp extension} and more direct categorifications of several perspectives on group extensions in $\Set$.
The last of these alternative characterisations will be important in Section~\ref{sec:String Model}.

\begin{theorem}
\label{st:oo-grp ext via pfbuns}
Let $\bH$ be an $\infty$-topos, and let \smash{$A \xrightarrow{\iota} G \xrightarrow{p} H$} be a sequence of morphisms in $\Grp(\bH)$.
The following are equivalent:
\begin{myenumerate}
\item \smash{$A \xrightarrow{\iota} G \xrightarrow{p} H$} is an extension of group objects in $\bH$, i.e.~the induced sequence $\rmB A \to \rmB G \to \rmB H$ is a fibre sequence in $\bH$ (see Definition~\ref{def:oo-grp extension}).

\item \smash{$A \xrightarrow{\iota} G \xrightarrow{p} H$} is a fibre sequence in $\Grp(\bH)$ and $G_1 \to H_1$ is an effective epimorphism in $\bH$.

\item \smash{$A_1 \xrightarrow{\iota_1} G_1 \xrightarrow{p_1} H_1$} is a fibre sequence in $\bH$ and $G_1 \to H_1$ is an effective epimorphism in $\bH$.

\item The morphism $p_1 \colon G_1 \to H_1$ together with the action $G_1 \dslash A$ of $A$ on $G_1$ induced by $\iota$ define a principal $A$-bundle over $H_1$.
\end{myenumerate}
\end{theorem}

\begin{proof}
$(1) \Rightarrow (3)$:
This implication was proven in~\cite{NSS:oo-bundles} already.
We import the proof for completeness:
consider the diagram
\begin{equation}
\label{eq:pasting contruction}
\begin{tikzcd}[column sep=1cm, row sep=0.75cm]
	A_1 \ar[r, "\iota_1'"] \ar[d] & G_1 \ar[d, "p_1'"] \ar[r] & * \ar[d] &
	\\
	* \ar[r] & H_1 \ar[r] \ar[d] & \rmB A \ar[d, "\rmB \iota"'] \ar[r] & * \ar[d]
	\\
	& * \ar[r] & \rmB G \ar[r, "\rmB p"'] & \rmB H
\end{tikzcd}
\end{equation}
in $\bH$.
Each square in diagram~\eqref{eq:pasting contruction} is a pullback square (this assumes~(1)).
It thus follows that the sequence
\begin{equation}
	A_1 \xrightarrow{\iota'_1} G_1 \xrightarrow{p'_1} H_1
\end{equation}
is a fibre sequence in $\bH$.
By this construction, the morphisms $\iota'$ and $p'$ coincide with the morphisms $\Omega \circ \rmB (\iota)$ and $\Omega \circ \rmB (p)$, respectively.
The equivalence~\eqref{eq:Omega -| B adjoint equiv} then yields that also \smash{$A_1 \xrightarrow{\iota_1} G_1 \xrightarrow{p_1} H_1$} is a fibre sequence in $\bH$.
Observe that each vertical morphism in diagram~\eqref{eq:pasting contruction} is an effective epimorphism since $* \to \rmB G$ is an effective epimorphism for every group object $G \in \Grp(\bH)$ and since effective epimorphisms are stable under pullback.
In particular, $p_1$ is an effective epimorphism.

$(3) \Leftrightarrow (2)$:
This follows from Lemma~\ref{st:lims in Gpd and Grp objects}.

$(3) \Rightarrow (4)$:
By Proposition~\ref{st:pfbun characterisation} it suffices to show that the action of $A$ on $G_1$ is principal (in the sense of Definition~\ref{def: principal action}).
We will make use of Lemma~\ref{st:principality condition}.
By assumption, the diagram
\begin{equation}
\begin{tikzcd}
	A_1 \ar[r, "\iota_1"] \ar[d]
	& G_1 \ar[d, "p_1"]
	\\
	* \ar[r]
	& H_1
\end{tikzcd}
\end{equation}
is a pullback diagram in $\bH$.
Let $m \colon G_1 \times G_1 \to G_1$ be the multiplication on $G_1$ (it can be identified with the morphism $d_1 \colon G_2 \to G_1$).
Further, let $\inv \colon G_1 \to G_1$ be a choice of inverse for the group object $G$ (compare the proof of Proposition~\ref{st:each G-Bun is pb bun}).
There is a commutative diagram
\begin{equation}
\begin{tikzcd}[column sep=1.25cm]
	G_1 \underset{H_1}{\times} G_1 \ar[r, "\inv \times 1"] \ar[d]
	& G_1 \times G_1 \ar[r, "m"]
	& G_1 \ar[d, "p_1"]
	\\
	* \ar[rr]
	& & H_1
\end{tikzcd}
\end{equation}
Using that $A_1 \simeq * \times_{H_1} G_1$, the universal property of pullbacks thus provides an essentially unique morphism \smash{$\varphi \colon G_1 \times_{H_1} G_1 \to A_1$}.
The morphisms
\begin{equation}
\begin{tikzcd}[column sep=1.5cm]
	G_1 \underset{H_1}{\times} G_1 \ar[r, shift left=0.1cm, "\pr_1 \times \varphi"]
	& G_1 \times A_1 \ar[l, shift left=0.1cm, "\pr_1 \times \mathrm{act}"]
\end{tikzcd}
\end{equation}
are mutually inverse equivalences in $\bH$.
Lemma~\ref{st:principality condition} now implies that $p_1 \colon G_1 \to H_1$, together with the $A$-action on $G_1$ induced by $\iota \colon A \to G$ is a principal $A$-bundle over $H_1$.

$(4) \Rightarrow (1)$:
First, note that the morphism $p \colon G \to H$ induces an action of $G$ on $H_1$ (via Proposition~\ref{st:G//A is action}).
We would like to compute the pullback
\begin{equation}
\label{eq:pb diag for classifying objects}
\begin{tikzcd}
	* \underset{\rmB H}{\times} \rmB G \ar[r] \ar[d]
	& \rmB G \ar[d]
	\\
	* \ar[r]
	& \rmB H
\end{tikzcd}
\end{equation}
in $\bH$.
Since colimits in $\bH$ are universal and $\rmB G = | {*} \dslash G|$, we have equivalences
\begin{equation}
	* \underset{\rmB H}{\times} \rmB G
	\simeq \big| (* \underset{\rmB H}{\times} *) \dslash G \big|
	\simeq |H_1 \dslash G|\,,
\end{equation}
where we have used that $\Omega \rmB H \simeq H$ in $\Grp(\bH)$ by the equivalence~\eqref{eq:Omega -| B adjoint equiv}.
One can also see this equivalence more explicitly by applying the pasting law to the diagram
\begin{equation}
\begin{tikzcd}[column sep={2.5cm,between origins}]
	H_1 \dslash G \ar[r] \ar[d]
	& H_1 \dslash H \ar[r] \ar[d]
	& * \ar[d]
	\\
	* \dslash G \ar[r]
	& * \dslash H \ar[r]
	& \rmB H
\end{tikzcd}
\end{equation}
The left-hand square is a pullback by the definition of the action $H_1 \dslash G$ and Proposition~\ref{st:G//A is action}.
The right-hand square is a pullback by Lemma~\ref{st:G//G as pb Bun}.

The canonical morphism $* \to H_1$ and the morphism $\iota \colon A \to G$ induce a morphism of simplicial objects $\psi \colon * \dslash A \longrightarrow H_1 \dslash G$.
Its colimit is a morphism $|\psi| \colon \rmB A \to |H_1 \dslash G|$.
In particular, we obtain from this an augmented simplicial object $* \dslash A \longrightarrow |H_1 \dslash G|$ in $\bH$.
Let $q \colon * \to |H_1 \dslash G|$ denote the restriction of this augmented simplicial object to $\bbDelta_{+, \leq 0}$.
We claim that $* \dslash A \longrightarrow |H_1 \dslash G|$ is equivalent to the \v{C}ech nerve of $q$.
By Definition~\ref{def:Cech nerve}, Proposition~\ref{st:gpd obs via X_1 X_1 X_0} and the fact that $* \dslash A$ is a groupoid object, it suffices to show that the diagram
\begin{equation}
\begin{tikzcd}
	A_1 \ar[r] \ar[d]
	& * \ar[d]
	\\
	* \ar[r]
	& {|H_1 \dslash G|}
\end{tikzcd}
\end{equation}
is a pullback diagram.
We can see this as follows:
with the $G$-action induced by $p \colon G \to H$, the morphism $H_1 \to |H_1 \dslash G|$ becomes a principal $G$-bundle in $\bH$.
Since the sequence of morphisms $A_1 \to G_1 \to H_1$ is a fibre sequence in $\bH$ by Corollary~\ref{st:fib pf P//G to X is G}, we obtain a double pullback diagram
\begin{equation}
\begin{tikzcd}
	A_1 \ar[r, "\iota_1"] \ar[d]
	& G_1 \ar[r] \ar[d, "p_1"]
	& * \ar[d]
	\\
	* \ar[r]
	& H_1 \ar[r]
	& {|H_1 \dslash G|}
\end{tikzcd}
\end{equation}
We have thus shown that the augmented simplicial object $* \dslash A \longrightarrow |H_1 \dslash G|$ is equivalent to the \v{C}ech nerve of $q \colon * \to |H_1 \dslash G|$.
As the latter describes the loop object $\Omega |H_1 \dslash G|$ as a group object in $\bH$, we infer that
\begin{equation}
	\Omega |H_1 \dslash G| \simeq A
	\quad \in \Grp(\bH)\,,
\end{equation}
i.e.~as group objects in $\bH$.

The claim will now follow from the equivalence~\eqref{eq:Omega -| B adjoint equiv}, provided we can additionally show that $|H_1 \dslash G|$ is a connected object in $\bH$.
One way of proving this is by showing that the morphism $* \to |H_1 \dslash G|$ is an effective epimorphism.
Consider the morphism of simplicial objects
\begin{equation}
	G_1 \dslash G \longrightarrow H_1 \dslash G
\end{equation}
induced by $p$.
In each simplicial level, this is an effective epimorphism by assumption on $p$.
Since effective epimorphisms are stable under colimits (see Lemma~\ref{st:EEpis stable under pb and po}), it follows that the morphism induced on colimits,
\begin{equation}
	* \simeq |G_1 \dslash G| \longrightarrow |H_1 \dslash G|
\end{equation}
is an effective epimorphism as well.
Alternatively, one can see that the left-hand morphism in~\eqref{eq:pb diag for classifying objects} is 0-connected:
the morphism $\rmB p \colon \rmB G \to \rmB H$ is a colimit of effective epimorphisms.
Thus, it is an effective epimorphism itself.
It is also a morphism between connected objects in $\bH$ and therefore necessarily induces an isomorphism on zeroth homotopy groups (i.e.~it is a connected morphism).
The claim then follows since $n$-connected morphisms in $\infty$-topoi are stable under pullback~\cite[Prop.~6.5.1.16(6)]{Lurie:HTT}.
\end{proof}

\begin{corollary}
Suppose \smash{$A \xrightarrow{\iota} G \xrightarrow{p} H$} is an extension of group objects in $\bH$.
Then, there is a canonical equivalence in $\bH$,
\begin{equation}
	|G_1 \dslash A| \simeq H\,.
\end{equation}
\end{corollary}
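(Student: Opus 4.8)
The plan is to read the statement off directly from the equivalence $(1)\Leftrightarrow(4)$ in Theorem~\ref{st:oo-grp ext via pfbuns} together with the definition of a principal $\infty$-bundle. First I would invoke Theorem~\ref{st:oo-grp ext via pfbuns}: since \smash{$\wA \xrightarrow{\widehat{\iota}} \wG \xrightarrow{\widehat{p}} \wH$} is an extension of group objects (condition~(1)), the equivalent condition~(4) holds, so the morphism $p \colon G \to H$ together with the action $G \dslash A$ of $A$ on $G$ induced by $\iota$ (as constructed in Proposition~\ref{st:G//A is action}) constitute an $A$-principal $\infty$-bundle over $H$.

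Next I would unwind Definition~\ref{def:principal oo-bundle}. By that definition, an $A$-action $G \dslash A \to H$ over $H$ is a principal $\infty$-bundle precisely when the augmented simplicial object $G \dslash A \to H$ is a colimiting cocone for the simplicial diagram $G \dslash A \in \Fun(\bbDelta^\opp, \bH)$, that is, when the augmenting morphism $p \colon G \to H$ induces an equivalence
\begin{equation}
	\underset{\bbDelta^\opp}{\colim}^\bH (G \dslash A) = |G \dslash A| \eq H\,.
\end{equation}
Combining these two steps yields the asserted equivalence $|G \dslash A| \simeq H$. It is canonical because it is nothing but the comparison morphism from the geometric realisation to the augmenting object $H$ that is part of the principal-bundle data.

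Since the argument is a direct consequence of the already-established equivalence of Theorem~\ref{st:oo-grp ext via pfbuns} and the definition of principal $\infty$-bundles, I do not anticipate any substantial obstacle. The only point requiring minor care is to confirm that the action $G \dslash A$ appearing in the corollary is literally the same one, induced by $\widehat{\iota}$ through Proposition~\ref{st:G//A is action}, that enters condition~(4); but this identification is exactly what the statement of that condition provides, so no further work is needed.
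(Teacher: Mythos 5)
Your proposal is correct and is precisely the argument the paper intends: the corollary is stated without proof immediately after Theorem~\ref{st:oo-grp ext via pfbuns} exactly because it follows by combining the implication $(1) \Rightarrow (4)$ of that theorem with the colimit condition in Definition~\ref{def:principal oo-bundle}. Your closing check that the action $G \dslash A$ in condition~(4) is the one induced by $\widehat{\iota}$ via Proposition~\ref{st:G//A is action} is also right, and indeed no further work is needed.
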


This is the $\infty$-categorical analogue of the canonical isomorphism $G/A \cong H$ for ordinary (set-theoretic) group extensions $A \to G \to H$.

\begin{corollary}
\label{st:L pres group extensions}
Let $\rmL \colon \bH \to \bH'$ be a functor between $\infty$-topoi which preserves geometric realisations and finite products.
Suppose \smash{$A \xrightarrow{\iota} G \xrightarrow{p} H$} is an extension of group objects in $\bH$.
Then, the sequence \smash{$\rmL A \xrightarrow{\rmL \iota} \rmL G \xrightarrow{\rmL p} \rmL H$} is an extension of group objects in $\bH'$.
\end{corollary}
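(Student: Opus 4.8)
The plan is to reduce the claim to the principal-bundle characterisation of group extensions in Theorem~\ref{st:oo-grp ext via pfbuns}, exploiting the equivalence of its conditions~(1) and~(4). Passing through condition~(4) rather than the defining condition~(1) is the crucial design choice: condition~(1) demands that $\rmB A \to \rmB G \to \rmB H$ be a fibre sequence, and its preservation under $\rmL$ would require $\rmL$ to preserve a pullback of the shape $* \times_{\rmB H} \rmB G$, which a functor preserving only finite products and geometric realisations has no reason to do. Condition~(4), by contrast, only asserts that a certain action is a principal bundle, and Theorem~\ref{st:L pres (princ) actions} is precisely the statement that principal bundles are preserved by such a functor $\rmL$.

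First I would observe that, because $\rmL$ preserves finite products, Proposition~\ref{st:preservation of Grp and B}(1) guarantees that $\rmL\wA$, $\rmL\wG$, $\rmL\wH$ are again group objects, now in $\bH'$, and that $\rmL\widehat{\iota}$ and $\rmL\widehat{p}$ are morphisms of group objects; thus the candidate sequence $\rmL\wA \to \rmL\wG \to \rmL\wH$ is well-formed. Next I would apply the equivalence $(1)\Leftrightarrow(4)$ of Theorem~\ref{st:oo-grp ext via pfbuns} in $\bH$ to rephrase the hypothesis: the augmented simplicial object $G \dslash A \to H$, equipped with the action of $A$ on $G$ induced by $\iota$ as in Proposition~\ref{st:G//A is action}, is a principal $A$-bundle over $H$. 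Feeding this into Theorem~\ref{st:L pres (princ) actions}(2) yields that $\rmL(G \dslash A) \to \rmL H$, with augmenting morphism $\rmL p$, is a principal $\rmL A$-bundle over $\rmL H$.

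The single nonformal step---and the step I expect to be the main obstacle---is to identify the image action $\rmL(G \dslash A)$ with the action $(\rmL G) \dslash (\rmL A)$ on $\rmL G$ induced by $\rmL\widehat{\iota}$ in the sense of Proposition~\ref{st:G//A is action}; only once this is done may I invoke the direction $(4)\Rightarrow(1)$ of Theorem~\ref{st:oo-grp ext via pfbuns}, now in $\bH'$, to conclude that $\rmL\wA \to \rmL\wG \to \rmL\wH$ is an extension. I would establish the identification by showing that $\rmL$ preserves the pullback in $\Fun(\bbDelta^\opp,\bH)$ that defines $G \dslash A$, namely the pullback of $* \dslash A \to * \dslash G \leftarrow G \dslash G$. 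Limits in $\Fun(\bbDelta^\opp,\bH)$ are computed levelwise and $\rmL$ acts levelwise, so it suffices to verify preservation in each simplicial degree $n$. There, under the canonical product decompositions $(G \dslash G)_n \simeq G \times G^n$, $(* \dslash G)_n \simeq G^n$, $(* \dslash A)_n \simeq A^n$ and $(G \dslash A)_n \simeq G \times A^n$, the defining square is the pullback of the projection $G \times G^n \to G^n$ along $\iota^n \colon A^n \to G^n$; such a pullback is canonically equivalent to $G \times A^n$ (the factor $G$ being untouched by the pullback) and is therefore preserved by any functor preserving finite products. Since $\rmL$ preserves both products and group objects, it carries these decompositions to the corresponding ones for $\rmL\widehat{\iota} \colon \rmL A \to \rmL G$, whence $\rmL(G \dslash A) \simeq (\rmL G) \dslash (\rmL A)$ as $\rmL A$-actions over $\rmL H$, which completes the reduction.
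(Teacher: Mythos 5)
Your proposal is correct and follows exactly the paper's route: the paper proves this corollary in one line by combining Theorem~\ref{st:L pres (princ) actions} with the equivalence $(1)\Leftrightarrow(4)$ of Theorem~\ref{st:oo-grp ext via pfbuns}. The only difference is that you explicitly verify the identification $\rmL(G \dslash A) \simeq (\rmL G) \dslash (\rmL A)$ as the action induced by $\rmL\widehat{\iota}$ via Proposition~\ref{st:G//A is action} (using that the defining pullback is levelwise a pullback of a projection along $\iota^n$, hence preserved by any finite-product-preserving functor) --- a step the paper leaves implicit, and which your argument handles correctly.
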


\begin{proof}
This statement now follows from combining Theorem~\ref{st:L pres (princ) actions} and Theorem~\ref{st:oo-grp ext via pfbuns}.
\end{proof}

\section{Homotopy-theoretic smooth string group models}
\label{sec:HoThy Smooth String}

In this section, we present a definition of string group extensions within the $\infty$-category $\bH_\infty$ of smooth spaces.
It relies on the singular complex functor $\rmS_e \colon \bH_\infty \to \bS$ for smooth spaces from Section~\ref{sec:Sm spaces and oo-topoi} and the theory of group extensions in $\infty$-topoi from Section~\ref{sec:PBuns and Grp Exts in oo-topoi}.
We begin by recalling the definition of a string group extension in the $\infty$-category $\bS$ of spaces.
Then, we use our results thus far to transfer this definition to $\bH_\infty$ along the functor $\rmS_e$, leading to a homotopy-theoretic definition of smooth string group extensions (Definition~\ref{def:String(H) in H_oo}).

After recalling some background on bundle gerbes in Section~\ref{sec:Grb background}, we provide new smooth models for the string group in Section~\ref{sec:String Model}, building on recent constructions of smooth 2-group extensions in~\cite{BMS:Sm2Grp}.
(There already, evidence was given that these smooth 2-group extensions can model the string group; here we provide a full formal framework and proof for that conjecture.)

\subsection{The definition of smooth string groups}
\label{sec:Smooth String groups}

The definition of a string group via the Whitehead tower (see Section~\ref{sec:Introduction}) is purely homotopy-theoretic.
In particular, in a string group extension $A \to \String(H) \to H$ the extending group $A$ is not fixed, but only its underlying homotopy type.
So far, to our knowledge there does not exist a definition of string group extensions in a smooth context that contains this flexibility---the extending group $A$ is usually chosen ad hoc to be some smooth version of $\rmB \sfU(1)$.
Here, we provide a smooth version of the original homotopy-theoretic definition (see Definition~\ref{def:String(H) in H_oo}).
In particular, only the underlying homotopy type of the extending smooth group $A$ is fixed in this definition.

To avoid clashes of notation, we denote a Lie group by a triple $(H_1, \cdot, e_H)$, where $H_1 \in \Mfd$ is the underlying manifold, $(-) \cdot (-)$ denotes the multiplication on $H_1$ and $e_H \colon * \to H_1$ is the neutral element in $H_1$.
Recall that each compact, simple and simply connected Lie group $(H_1, \cdot, e_H)$ is also 2-connected and satisfies $\rmH^3(H_1;\ZN) \cong \ZN$~\cite{Cartan:topologie_des_groupes_de_Lie}.
Any Lie group $(H_1, \cdot, e_H)$ canonically defines a group object $H$ in the $\infty$-topos of spaces $\bS$.
We start by reformulating the definition of a string group extension of topological groups within the $\infty$-category of spaces:

\begin{definition}
\label{def:String(H) in Spaces}
Let $(H_1, \cdot, e_H)$ be a compact, simple and simply connected Lie group, and denote by $H \in \Grp(\bS)$ its associated group object in $\bS$.
A \emph{string group extension of $(H_1, \cdot, e_H)$} is an extension of group objects
\begin{equation}
\begin{tikzcd}
	A \ar[r, "\iota"]
	& \String(H) \ar[r, "p"]
	& H
\end{tikzcd}
\end{equation}
in $\bS$ such that 
\begin{myenumerate}
\item $A_1$ is an Eilenberg-MacLane space $K(\ZN,2)$, and

\item under the isomorphism
\begin{equation}
	\pi_0 \ul{\bS}(H_1, \rmB A)
	\cong \pi_0 \ul{\bS} \big( H_1, K(\ZN,3) \big)
	\cong \rmH^3(H_1; \ZN) \cong \ZN\,,
\end{equation}
the classifying morphism $H_1 \to \rmB A$ of the $A$-principal $\infty$-bundle $\String(H)_1 \dslash A \to H$ (compare Remark~\ref{rmk:Bun_G(X) = H(X,BG)} and Theorem~\ref{st:oo-grp ext via pfbuns}(4)) represents a generator of $\ZN$.
\end{myenumerate}
\end{definition}

Given condition~(1), condition~(2) is equivalent to saying that the map $\String(H)_1 \to H_1$ of spaces induces an isomorphism $\pi_i(\String(H)_1) \to \pi_i(H_1)$ for $i \neq 3$ and that $\pi_3(\String(H)_1) \cong 0$.
This is a consequence of the Hurewicz Theorem, the Universal Coefficient Theorem, and the long exact sequence of homotopy groups associated to a (homotopy) fibre sequence of spaces.
That is, $\String(H)_1 \to H_1$ is a 3-connected covering of $H_1$.

Recall the $\infty$-topos $\bH_\infty = \scP(\Cart)$ from Section~\ref{sec:Psh(Cart)}.
There we also introduced the localisation $L_I \bH_\infty$ of $\bH_\infty$ at the set $I = \{ c \times \RN \to c \, | \, c \in \Cart\}$ and the smooth singular complex functor $\rmS_e \colon \bH_\infty \to \bS$.
Also recall the fully faithful embedding \smash{$\ul{(-)} \colon \Mfd \hookrightarrow \bH_\infty$}, with $\ul{M}(c) = \Mfd(c,M)$; under this embedding, any Lie group $(H_1, \cdot, e_H)$ canonically gives rise to a group object $\ul{H}$ in $\bH_\infty$.
We can now use our results from Section~\ref{sec:PBuns and Grp Exts in oo-topoi} to transfer the definition of a string group extension to the $\infty$-topos $\bH_\infty$:

\begin{definition}
\label{def:String(H) in H_oo}
Let $(H_1, \cdot, e_H)$ be a compact, simple, and simply connected Lie group.
A \emph{smooth string group extension of $(H_1, \cdot, e_H)$} is an extension
\begin{equation}
\begin{tikzcd}
	A \ar[r, "\iota"]
	& \String(\ul{H}) \ar[r, "p"]
	& \ul{H}
\end{tikzcd}
\end{equation}
of group objects in $\bH_\infty$ whose its image under $\rmS_e$ is a string group extension in $\bS$.
\end{definition}

Note that by Theorem~\ref{st:L pres group extensions} the functor $\rmS_e$ maps group extensions in $\bH_\infty$ to group extensions in $\bS$.
Further, even though $\rmS_e$ induces an equivalence between $\bS$ and the localisation $L_I \bH_\infty$ rather than the full $\infty$-category $\bH_\infty$, we do not need to demand that $A$, $\String(\ul{H})$ and $\ul{H}$ are local objects, because $\rmS_e$ sends all $I$-local equivalences in $\bH_\infty$ to equivalences in $\bS$ (Theorem~\ref{st:S_e Thm}(1)).

\begin{remark}
\label{rmk:generalising string group def}
Definition~\ref{def:String(H) in H_oo} is a generalisation as well as a weakening of the following approach to smooth string group extensions (see, for instance, \cite{FRS:U1-Gerbe_connections}):
there, one works in the localisation $L_\tau \bH_\infty$ of $\bH_\infty$ at the \v{C}ech nerves of differentiably good open coverings $\{c_a \to c\}_{a \in \Lambda}$ of cartesian spaces $c \in \Cart$.
Recall that a differentiably good open covering of $c \in \Cart$ is an open covering $\{c_a \hookrightarrow c\}_{a \in \Lambda}$ such that every finite non-empty intersection of the images of the patches $c_a$ is again a cartesian space.
The differentiably good open coverings endow $\Cart$ with a Grothendieck coverage $\tau$~\cite{FSS:Cech_diff_char_classes_via_L_infty,Schreiber:DCCT}.
In~\cite{FRS:U1-Gerbe_connections} string group extension of $H$ are defined via the pullback
\begin{equation}
\label{eq:usual String pb}
\begin{tikzcd}
	\rmB \String(\ul{H}) \ar[r] \ar[d] & * \ar[d]
	\\
	\rmB \ul{H} \ar[r, "\frac{1}{2} p_1"'] & \rmB^3 \ul{\rmU(1)}
\end{tikzcd}
\end{equation}
Here, $\frac{1}{2} p_1$ denotes the fractional first Pontryagin class, which is a generator of $\rmH^4(\rmB \ul{H}; \ZN) \cong \ZN$.
However, this definition of $\String(\ul{H})$ is considerably stricter than the original perception of $\String(H)$ as a 3-connected covering of $H_1$ by another group object (Definition~\ref{def:String(H) in Spaces}).
For instance, the definition of a string group extension based on~\eqref{eq:usual String pb} enforces that the morphism $\String(\ul{H})_1 \to \ul{H}_1$ is a $\rmB \ul{\sfU(1)}$-principal $\infty$-bundle (note that if $\bH$ is an $\infty$-topos and \smash{$A \in \Grp(\bH)$} is a group object whose multiplication lifts to an $\bbE_2$-algebra structure, then $\rmB A$ is canonically the underlying object of a group object in $\bH$~\cite{NSS:oo-bundles}).
However, from the purely homotopy-theoretic point of view, it is not the actual fibre of this map in $\bH$ that should be fixed, but only the \textit{homotopy type of its underlying space in $\bS$} (which must be a $K(\ZN;2)$).
Definition~\ref{def:String(H) in H_oo} emphasises this latter, homotopy-theoretic aspect of string group extensions.

More concretely, for smooth string group extensions \smash{$A \overset{\iota}{\to} \String(\ul{H}) \overset{p}{\to} \ul{H}$} in the sense of Definition~\ref{def:String(H) in H_oo} it is enough if there is an $I$-local equivalence $A_1 \simeq \rmB \ul{\sfU(1)}$ in $\bH_\infty$.
Therefore, this setup is considerably more general than working with the pullback~\eqref{eq:usual String pb}.
In particular, two different smooth string group extensions of a Lie group $H$ need not be equivalent in $\bH_\infty$, but only in $L_I \bH_\infty$.
In Section~\ref{sec:String Model} we present a smooth string group extension which satisfies the criteria from Definition~\ref{def:sm 2Grp}, but not the stricter version~\eqref{eq:usual String pb}:
its fibre $A$ is \textit{not} equivalent to $\rmB \ul{\rmU(1)}$ in $L_\tau \bH_\infty$, but only in $L_I \bH_\infty \simeq \bS$.
\qen
\end{remark}

\begin{remark}
Not allowing for this flexibility would lead to simply missing or being unable to recognise smooth group extensions found in nature whose underlying spaces form a string group extension in the classical sense.
The smooth $\infty$-group $A$ we find in the string group model in Section~\ref{sec:String Model} is much larger than the simple delooping of $\ul{\sfU(1)}$, but this can have advantages:
for example, our string group model should act extremely naturally on the K-theory of the underlying Lie group twisted by its basic gerbe.
\qen
\end{remark}

\begin{remark}
It will be very interesting to see a Lie-algebra version of Definition~\ref{def:String(H) in H_oo}.
The $\infty$-groups $A \in \Grp(\bH_\infty)$ that we allow to appear in string group extensions can have much larger Lie algebras than those which appear in the stricter definition via~\eqref{eq:usual String pb}.
This is true, in particular, for the smooth string group extension we present in Section~\ref{sec:String Model} below.
There might hence be a Lie-algebra version of $I$-local equivalences of group objects in $\bH_\infty$.
\qen
\end{remark}

\subsection{Bundle gerbes and their symmetries}
\label{sec:Grb background}

Before we can present our smooth string group extension, we need to recall some background on bundle gerbes.
We will not give full definitions or details here, but refer the reader to~\cite{Waldorf--More_morphisms, Waldorf--Thesis, Bunk--Thesis, BSS--HGeoQuan} for technical background and~\cite{Bunk-Szabo:Fluxes_brbs_2Hspaces, Bunk:Gerbes_Review} for an introduction to the topic.
Bundle gerbes provide an explicit, geometric model for categorified line bundles.
We point out that there also exists a notion of connection on a bundle gerbe, but here we will only be working with bundle gerbes \emph{without} connection.
(This is the main technical cause for the distinction between our smooth string group model and that in~\cite{FRS:U1-Gerbe_connections}.)

To any manifold $M$, we can assign a symmetric monoidal 2-groupoid $(\Grb(M), \otimes)$ of bundle gerbes on $M$.
Given a bundle gerbe $\CG \in \Grb(M)$, the monoidal groupoid $\Grb(M)(\CG, \CG)$ of automorphisms of $\CG$ is canonically equivalent to the symmetric monoidal groupoid $(\HLB(M),\otimes)$ of hermitean line bundles on $M$ with the usual tensor product (which we also denote by $\otimes$).
Note that $(\HLB(M),\otimes)$ is even a \emph{2-group}; that is, it is a symmetric monoidal groupoid in which every object has an inverse with respect to the monoidal product.
Every smooth map $f \colon N \to M$ of manifolds induces a monoidal 2-functor
\begin{equation}
	f^* \colon \Grb(M) \longrightarrow \Grb(N)\,.
\end{equation} 
Isomorphism classes of gerbes are in canonical bijection with the third integer cohomology of $M$:
there is an isomorphism of abelian groups
\begin{equation}
\label{eq:DD class}
	\pi_0 \big( \Grb(M), \otimes \big) \cong \rmH^3(M;\ZN)\,.
\end{equation}
The class associated to a gerbe $\CG$ under this isomorphism is called the \emph{Dixmier-Douady class of $\CG$}.

We let $\bH_{\leq 1}$ denote the following 2-category:
its objects are functors $\pi \colon \scC \to \Cart$ that are Grothendieck fibrations in groupoids (that is, $\pi$ is a Grothendieck fibration and all its fibres are groupoids).
Its morphisms $(\pi \colon \scC \to \Cart) \longrightarrow (\pi' \colon \scC' \to \Cart)$ are functors $F \colon \scC \to \scC'$ such that $\pi' \circ F = \pi$, and its 2-morphisms are natural transformations $\eta \colon F \to F'$ such that $\pi' \eta$ is the identity natural transformation $1_\Cart \to 1_\Cart$.
Note that the 2-category $\bH_{\leq 1}$ is canonically equivalent to the 2-category of pseudo-functors $\Cart^\opp \to \Gpd$ from $\Cart^\opp$ to the 2-category of groupoids via the Grothendieck construction.
We make the following definitions; for more background, see~\cite{BMS:Sm2Grp,SP:String}.

\begin{definition}
\label{def:sm 2Grp}
\emph{\cite{BMS:Sm2Grp}}
The \emph{2-category of smooth 2-groups} is the 2-category of group objects in the 2-category $\bH_{\leq 1}$.
\end{definition}

\begin{example}
Let $(H_1, \cdot, e_H)$ be a Lie group.
We associate to it the following category, denoted by $\textint \ul{H}_1$:
its objects are pairs $(c,h)$ of a cartesian space $c \in \Cart$ and a smooth map $h \colon c \to H_1$.
A morphism $(c,h) \to (c', h')$ is a smooth map $f \colon c \to c'$ such that $h' \circ f = h$.
The category $\textint \ul{H}_1$ comes with a canonical projection functor $\textint \ul{H}_1 \to \Cart$.
The product on $H_1$-valued maps turns $\textint \ul{H}_1$ into a smooth 2-group in the sense of Definition~\ref{def:sm 2Grp}; we denote this smooth 2-group by $\textint \ul{H}$.
Note that $\textint \ul{H}_1$ is simply the Grothendieck construction of the presheaf of sets $\ul{H}_1$ on $\Cart$.
\qen
\end{example}

\begin{example}
For a manifold $M$, let $\HLB(M)$ denote the groupoid of hermitean line bundles on $M$.
For fixed manifold $M$, we define a category $(\HLB^M)_1$ as follows:
its objects are pairs $(c,L)$ of a cartesian space $c \in \Cart$ and a hermitean line bundle $L \in \HLB(c {\times} M)$.
A morphism $(c,L) \to (c',L')$ is a pair $(f, \psi)$ of a smooth map $f \colon c \to c'$ and an isomorphism $\psi \colon L \to (f {\times} 1_M)^*L'$ of hermitean line bundles over $c$.
This category comes with a projection functor $(\HLB^M)_1 \to \Cart$.
The tensor product of hermitean line bundles turns $(\HLB^M)_1$ into a smooth 2-group, which we denote by $\HLB^M$.
\qen
\end{example}

Let $M$ be a manifold, and let $\CG \in \Grb(M)$ be a gerbe on $M$.
Further, let $(H_1, \cdot, e_H)$ be a connected Lie group acting smoothly on $M$ from the left; we denote the action by $\Phi \colon H_1 \times M \to M$.
Given these data, we define a category $\Sym(\CG)_1$ as follows:
an object in $\Sym(\CG)_1$ is a triple $(c, h, \CA)$, where $c \in \Cart$ is a cartesian space and where $h \colon c \to H_1$ is a smooth map.
These give rise to a smooth map $\Phi_h \colon c \times M \to c \times M$, defined as the composition
\begin{equation}
	\Phi_h \colon c \times M
	\xrightarrow{\Delta \times 1_M} c \times c \times M
	\xrightarrow{1_c \times h \times 1_M} c \times H \times M
	\xrightarrow{1_c \times \Phi} c \times M\,,
\end{equation}
where $\Delta \colon c \to c \times c$ is the diagonal map.
Then, $\CA$ is a 1-isomorphism
\begin{equation}
	\CA \colon \pr_M^*\CG \longrightarrow \Phi_h^*\CG
\end{equation}
of gerbes on the manifold $c \times M$.
A morphism $(c,h,\CA) \to (c',h',\CA')$ is a pair $(f,\psi)$, where $f$ is a smooth map $f \colon c \to c'$ such that $h' \circ f = h$, and where $\psi$ is a 2-isomorphism $\psi \colon \CA \longrightarrow (f \times 1_M)^*\CA'$
(where we have implicitly used that there is a canonical 1-isomorphism $(f \times 1_M)^*\Phi_{h'}^*\CG \cong \Phi_h^*\CG$).
Observe that there is a projection functor $p_1 \colon \Sym(\CG)_1 \to \textint \ul{H}_1$, acting as $(c, h, \CA) \mapsto (c, h)$ and $(f, \psi) \mapsto f$.

\begin{remark}
In this set-up, the following statements hold true:
\begin{myenumerate}
\item There is a canonical inclusion $\iota_1 \colon (\HLB^M)_1 \hookrightarrow\Sym(\CG)_1$ in $\bH_{\leq 1}$.

\item The connectedness of $H_1$ ensures that the functor $p_1$ is surjective on objects.
Further, $p_1$ is an essentially surjective Grothendieck fibration in groupoids; it is even strictly surjective on objects~\cite[Thm.~5.27]{BMS:Sm2Grp}.

\item The equivalence $\Grb(N)(\CG',\CG') \simeq (\HLB(N),\otimes)$ for any gerbe $\CG'$ on any manifold $N$ implies that the diagram
\begin{equation}
\begin{tikzcd}[row sep=1cm]
	(\HLB^M)_1 \ar[r, "\iota_1"] \ar[d] & \Sym(\CG)_1 \ar[d, "p_1"]
	\\
	* \ar[r, "e_{\ul{H}}"'] & \textint \ul{H}_1
\end{tikzcd}
\end{equation}
is a pullback in $\bH_{\leq 1}$, where $e_{\ul{H}}$ is the functor that sends $c \in \Cart$ to the constant map $c \to H_1$ with value the unit element of $H_1$.
Since $p_1$ is a Grothendieck fibration in groupoids, this pullback is even a homotopy pullback~\cite[App.~A.1]{BMS:Sm2Grp}.
\qen
\end{myenumerate}
\end{remark}

\begin{theorem}
{\emph{\cite[Thms.~5.23, 5.27]{BMS:Sm2Grp}}}
Let $\Phi \colon H_1 \times M \to M$ be a smooth action of a connected Lie group $(H_1, \cdot, e_H)$ on a manifold $M$.
Let $\CG \in \Grb(M)$ be a bundle gerbe on $M$.
\begin{myenumerate}
\item $\Sym(\CG)_1$ carries the structure of a smooth 2-group.
We denote this smooth 2-group by $\Sym(\CG)$.

\item The functors $\iota_1$ and $p_1$ canonically lift to morphisms of smooth 2-groups and induce a sequence
\begin{equation}
\label{eq:Sym 2Grp SES}
	\HLB^M \overset{i}{\longrightarrow} \Sym(\CG) \overset{p}{\longrightarrow} \textint \ul{H}
\end{equation}
of smooth 2-groups.
\end{myenumerate}
\end{theorem}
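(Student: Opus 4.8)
The plan is to prove the two assertions in turn: first realise $\Sym(\CG)$ as a group object in $\bH_{\leq 1}$, then exhibit $i$ and $p$ as morphisms of such group objects and establish the fibration and surjectivity properties of $p$ by hand. I would begin with the underlying fibration. The key step is to check that $p\colon\Sym(\CG)\to\textint\ul{H}$ is a Grothendieck fibration in groupoids; since $\textint\ul{H}\to\Cart$ is the discrete fibration associated with the presheaf $\ul H$, and composites of fibrations in groupoids are again such, this yields that $\Sym(\CG)\to\Cart$ lies in $\bH_{\leq 1}$. Concretely, the fibre of $p$ over $(c,h)$ has as objects the $1$-isomorphisms $\CA\colon\pr_M^*\CG\to\Phi_h^*\CG$ in the $2$-groupoid $\Grb(c\times M)$ and as morphisms the $2$-isomorphisms over $1_c$; these are invertible because $\Grb(c\times M)$ is a $2$-groupoid, so the fibres are groupoids. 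Given a morphism $f\colon(c,h)\to(c',h')$ in $\textint\ul{H}$ and an object $(c',h',\CA')$ above $(c',h')$, the cartesian lift is $(f,\id)\colon(c,h,(f\times 1_M)^*\CA')\to(c',h',\CA')$, where the canonical $1$-isomorphism $(f\times 1_M)^*\Phi_{h'}^*\CG\cong\Phi_h^*\CG$ (valid since $h'\circ f=h$) lets one read $(f\times 1_M)^*\CA'$ as a morphism $\pr_M^*\CG\to\Phi_h^*\CG$; cartesianness is then the universal property of pullback along $f\times 1_M$.

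Next I would build the group structure. Multiplication combines two symmetries over a common $c$ by
\[ (c,h_1,\CA_1)\cdot(c,h_2,\CA_2) = \bigl(c,\,h_1h_2,\,\Phi_{h_2}^*\CA_1\circ\CA_2\bigr), \]
using the identity $\Phi_{h_1h_2}=\Phi_{h_1}\circ\Phi_{h_2}$ (a direct consequence of $\Phi$ being a left action), so that $\Phi_{h_2}^*\CA_1$ is a $1$-isomorphism $\Phi_{h_2}^*\CG\to\Phi_{h_1h_2}^*\CG$ composable with $\CA_2$. The unit is $(c,e,\id_{\pr_M^*\CG})$ (with $\Phi_e=\id$) and the inverse is $(c,h,\CA)\mapsto\bigl(c,h^{-1},(\Phi_{h^{-1}}^*\CA)^{-1}\bigr)$. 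I would assemble these into a multiplication functor $\Sym(\CG)\times_\Cart\Sym(\CG)\to\Sym(\CG)$, a unit $\Cart\to\Sym(\CG)$, and an inversion functor over $\Cart$, and verify the group-object axioms of Definition~\ref{def:sm 2Grp}. Because the pullbacks $\Phi_h^*$ and $(f\times 1_M)^*$ are only pseudofunctorial on $\Grb(-)$, associativity, unitality, and invertibility hold only up to the canonical coherence $2$-isomorphisms $\Phi_{h}^*\Phi_{h'}^*\cong\Phi_{h'h}^*$; from these I would produce the associator and unitor and check the pentagon and triangle.

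For the second assertion, that $p$ is a morphism of $2$-groups is immediate from the formula, since $p$ forgets $\CA$ and sends the product to $(c,h_1h_2)=(c,h_1)\cdot(c,h_2)$. For $i\colon\HLB^M\to\Sym(\CG)$ I would send $(c,L)$ to $(c,e,\alpha_L)$, where $\alpha_L\colon\pr_M^*\CG\to\Phi_e^*\CG=\pr_M^*\CG$ is the automorphism corresponding to $L$ under the monoidal equivalence $\Grb(c\times M)(\pr_M^*\CG,\pr_M^*\CG)\simeq(\HLB(c\times M),\otimes)$; monoidality gives $\alpha_{L_1\otimes L_2}=\alpha_{L_1}\circ\alpha_{L_2}$, so $i$ is a homomorphism, and identifying the fibre of $p$ over the unit with $\HLB^M$ makes the recorded square a (homotopy) pullback. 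It remains to prove $p$ is surjective on objects, i.e.\ that for each $(c,h)$ some $\CA\colon\pr_M^*\CG\to\Phi_h^*\CG$ exists; by the Dixmier--Douady classification~\eqref{eq:DD class} this amounts to $(\pr_M\circ\Phi_h)^*[\CG]=\pr_M^*[\CG]$ in $\rmH^3(c\times M;\ZN)$. Since $c$ is contractible, $h$ is homotopic to a constant with value $h_0$, and since $H$ is connected $h_0$ can be joined to the unit by a path; the resulting isotopy of diffeomorphisms shows $\pr_M\circ\Phi_h$ is homotopic to $\pr_M$, so the two classes agree. This is exactly where connectedness enters: it guarantees that $H$ acts trivially on $\rmH^3(M;\ZN)$.

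The principal difficulty I anticipate is the coherence bookkeeping in the group structure: because $\Grb(-)$ is a pseudofunctor, every pulled-back $1$- or $2$-isomorphism carries canonical comparison $2$-cells, and verifying the group-object axioms means checking that these fit together coherently (pentagon, triangle, and compatibility of the inverse with multiplication). Organising this cleanly — by strictifying $\Grb$, or by phrasing the whole structure as a single morphism of nerves in $\bH_{\leq 1}$ so that the coherence is forced rather than checked cell by cell — is the crux of the argument. By comparison the fibration property and the surjectivity statement are routine, the latter hinging only on homotopy-invariance of the Dixmier--Douady class and the connectedness of $H$.
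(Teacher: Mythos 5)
This theorem is imported rather than proved here: the paper cites \cite{BMS:Sm2Grp} (Thms.~5.23, 5.27) and gives no argument of its own, and your blind reconstruction follows essentially the same route as that source — the multiplication $(c,h_1,\CA_1)\cdot(c,h_2,\CA_2)=(c,h_1h_2,\Phi_{h_2}^*\CA_1\circ\CA_2)$, the cartesian lifts via $(f\times 1_M)^*$ (using the strict identity $\Phi_{h'}\circ(f\times 1_M)=(f\times 1_M)\circ\Phi_h$), the identification of the fibre of $p$ over the unit with $\HLB^M$ through the monoidal equivalence $\Grb(c\times M)(\pr_M^*\CG,\pr_M^*\CG)\simeq(\HLB(c\times M),\otimes)$, and surjectivity of $p$ on objects from connectedness of $H$ together with homotopy invariance of the Dixmier--Douady class~\eqref{eq:DD class} are exactly the ingredients of the cited proof. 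Your formulas type-check (in particular $\Phi_{h_1h_2}=\Phi_{h_1}\circ\Phi_{h_2}$ for a left action makes $\Phi_{h_2}^*\CA_1\circ\CA_2$ a $1$-isomorphism $\pr_M^*\CG\to\Phi_{h_1h_2}^*\CG$), and the one step you defer — the pentagon/triangle coherence forced by pseudofunctoriality of $\Grb(-)$ — is indeed the only laborious part of the argument and is correctly scoped in your closing paragraph.
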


The nerve functor $N \colon \Cat \to \Cat_\infty$ induces a functor $N \colon \bH_{\leq 1} \to \bH_\infty$ (where we have used the canonical equivalence between $\bH_{\leq 1}$ and the 2-category of pseudo-functors $\Cart^\opp \to \Gpd$ from $\Cart^\opp$ to the 2-category of groupoids).
This functor, in particular, preserves final objects and products, so that it maps smooth 2-groups to group objects in $\bH_\infty$.
Our smooth string group model will be obtained by applying this functor to the sequence~\eqref{eq:Sym 2Grp SES}.

\subsection{A smooth string group model}
\label{sec:String Model}

We can now state the main theorem of this section.
It provides a new smooth model for smooth string group extensions which fits Definition~\ref{def:String(H) in H_oo}, but which lies outside the scope of the stricter definition via the pullback~\eqref{eq:usual String pb}.
Note that applying the nerve functor $N$ to $\textint \ul{H}_1 \in \bH_{\leq 1}$ yields the familiar presheaf of spaces $\ul{H}_1 \in \bH_\infty$, defined via $\ul{H}_1(c) = \Mfd(c,H)$ for cartesian spaces $c \in \Cart$.
Further, $N$ maps the smooth 2-group $\textint \ul{H}$ in $\bH_{\leq 1}$ to the group object $\ul{H}$ in $\bH_\infty$.

\begin{theorem}
\label{st:String model from Sym(G)}
Let $(H_1, \cdot, e_H)$ be a compact, simple, simply connected Lie group.
We consider the left-action of $H_1$ on itself via left multiplication.
Let $\CG \in \Grb(H)$ be a gerbe on $H_1$ whose class in $\rmH^3(H;\ZN) \cong \ZN$ is a generator (see~\eqref{eq:DD class}).
Then, the sequence
\begin{equation}
\label{eq:Sym_G String sequence}
\begin{tikzcd}
	N \big( \HLB^H \big) \ar[r, "N \iota"]
	& N \big( \Sym(\CG) \big) \ar[r, "N p"]
	& \ul{H}
\end{tikzcd}
\end{equation}
of group object in $\bH_\infty$ is a smooth string group extension of $(H_1, \cdot, e_H)$.
\end{theorem}

The proof of Theorem~\ref{st:String model from Sym(G)} will occupy the remainder of this section.
By Definition~\ref{def:String(H) in H_oo} we have to show that the sequence~\eqref{eq:Sym_G String sequence} is an extension of group objects in $\bH_\infty$ and that its image under the functor $\rmS_e \colon \bH_\infty \to \bS$ is a string group extension in $\bS$ in the sense of Definition~\ref{def:String(H) in Spaces}.

\begin{proposition}
The sequence~\eqref{eq:Sym_G String sequence} is an extension of group objects in the $\infty$-topos $\bH_\infty$.
\end{proposition}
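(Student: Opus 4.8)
The plan is to deduce the statement from the characterisation of group extensions in Theorem~\ref{st:oo-grp ext via pfbuns}. That theorem reduces the problem to verifying the fibre-sequence condition~(3) on the \emph{underlying} objects: I must show that $N\HLB^H \to N\Sym(\CG) \to \ul{H}$ is a fibre sequence in $\bH_\infty$, i.e.\ that the square whose corners are $N\HLB^H$, $N\Sym(\CG)$, $*$, and $\ul{H}$---with $* \to \ul{H}$ the unit $Ne_H$---is a pullback in $\bH_\infty$. Before doing so I would record the preliminary point that~\eqref{eq:Sym_G String sequence} really is a sequence of morphisms of group objects in $\bH_\infty$; this is immediate, since $N$ preserves finite products and final objects and therefore sends smooth $2$-groups and their homomorphisms to group objects and group homomorphisms.

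The input I would use is the homotopy pullback square supplied by~\cite{BMS:Sm2Grp} and recalled in the Remark preceding~\eqref{eq:Sym 2Grp SES}: the square in $\bH_{\leq 1}$ exhibiting $\HLB^H$ as the fibre of $p \colon \Sym(\CG) \to \textint\ul{H}$ over $e_H$ is a pullback, and---because $p$ is a Grothendieck fibration in groupoids---even a homotopy pullback. Since $N$ carries $\textint\ul{H}$ to $\ul{H}$, applying $N$ to this square yields precisely the square I need to identify as a pullback in $\bH_\infty$. Thus everything reduces to showing that $N \colon \bH_{\leq 1} \to \bH_\infty$ takes this homotopy pullback to a pullback.

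This preservation is the step I expect to be the main obstacle, because $N$ does not preserve homotopy pullbacks in general. My plan here is to argue pointwise over $c \in \Cart$, where homotopy pullbacks in both $\bH_{\leq 1}$ (presheaves of groupoids) and $\bH_\infty = \Fun(\Cart^\opp, \bS)$ are computed. Pointwise, $N$ is the ordinary nerve $\Gpd \to \bS$, which is a right Quillen functor from the canonical model structure on groupoids to the Kan--Quillen model structure: it preserves finite products, sends isofibrations of groupoids to Kan fibrations, and sends equivalences of groupoids to weak equivalences. Since every groupoid is fibrant, it follows that the nerve preserves homotopy pullbacks of groupoids. The fibration property of $p$ has already ensured that the square is a genuine homotopy pullback in $\bH_{\leq 1}$, so its pointwise image under $N$ is a homotopy pullback of spaces, and these assemble into a homotopy pullback in $\bH_\infty$.

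With the required square established as a pullback in $\bH_\infty$, I would conclude by the implication $(3) \Rightarrow (1)$ of Theorem~\ref{st:oo-grp ext via pfbuns} that the sequence~\eqref{eq:Sym_G String sequence} is an extension of group objects in $\bH_\infty$.
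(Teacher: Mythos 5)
Your proposal is correct, but it takes a genuinely different route from the paper's proof. The paper verifies criterion~(4) of Theorem~\ref{st:oo-grp ext via pfbuns}: it shows that $Np$ is an effective epimorphism (using that $p_{|c}$ is essentially surjective on each fibre, that effective epimorphisms in the presheaf $\infty$-topos $\bH_\infty$ are detected objectwise, and that in $\bS$ they are exactly the $\pi_0$-surjections), and that the $N\HLB^H$-action on $N\Sym(\CG)$ is principal (imported from~\cite[Thm.~5.27]{BMS:Sm2Grp} via the level-one principality criterion of Lemma~\ref{st:principality condition} and the fact that $N$, being a right adjoint, preserves the relevant pullback), then invokes Proposition~\ref{st:pfbun characterisation}. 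You instead verify criterion~(3), the fibre-sequence condition on underlying objects, by taking the homotopy pullback square recalled in the Remark before~\eqref{eq:Sym 2Grp SES} and arguing pointwise that the nerve $\Gpd \to \bS$, being right Quillen for the canonical model structure with all objects fibrant, preserves homotopy pullbacks---so both arguments ultimately rest on input from~\cite{BMS:Sm2Grp}, just on different pieces of it (the fibration/pullback square in your case, essential surjectivity plus principality in the paper's). Your route is somewhat shorter and avoids the effective-epimorphism and principality verifications entirely; the pointwise step is sound since the fibration property of $p$ (indeed, the fibres of $\textint\ul{H}$ are discrete, so $p_{|c}$ is automatically an isofibration) makes the strict pointwise pullbacks homotopy pullbacks, which the nerve then preserves. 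What the paper's approach buys is that it exhibits en route exactly the structures used immediately afterwards in the proof of Theorem~\ref{st:String model from Sym(G)}---the principal $\infty$-bundle $N\Sym(\CG) \dslash N\HLB^H \to \ul{H}$ and its classifying map---though, by the equivalences of Theorem~\ref{st:oo-grp ext via pfbuns}, these follow from your version as well.
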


\begin{proof}
The nerve functor $N \colon \Cat \to \Cat_\infty$ is a right adjoint and hence maps products in $\bH_{\leq 1}$ to products in $\bH_\infty$, and final objects to the final objects.
Consequently, it preserves group objects and group actions.

We will now use the characterisation of group extensions from Theorem~\ref{st:oo-grp ext via pfbuns}(4) to show that the sequence~\eqref{eq:Sym_G String sequence} of group objects in $\bH_\infty$ is an extension of group objects.
That is, we have to show that $N \Sym(\CG)_1$ with the $N \HLB^H$-action induced by the morphism $N \iota$ (cf.~Proposition~\ref{st:G//A is action}) is an $N \HLB^H$-principal $\infty$-bundle over $\ul{H}_1$.
According to the characterisation of principal $\infty$-bundles in Proposition~\ref{st:pfbun characterisation}, it suffices to prove that the morphism $N p_1$ is an effective epimorphism and that the action of $N \HLB^H$ on $N \Sym(\CG)_1$ is principal.

We start by showing that the morphism $N p_1$ is an effective epimorphism:
by~\cite[Sec.~5.1]{BMS:Sm2Grp} the restriction $p_{1|c}$ of $p_1$ to any fibre is essentially surjective, hence $N p_{1|c}$ is surjective on connected components.
Since $\bH_\infty$ is a presheaf $\infty$-topos (in which limits and colimits are computed objectwise), a morphism in $\bH_\infty$ is an effective epimorphism if and only if it is objectwise an effective epimorphism in $\bS$.
The effective epimorphisms in $\bS$, however, are exactly those morphisms which are surjective on connected components~\cite[Cor.~7.2.1.15]{Lurie:HTT}.
Therefore, $N p_1$ is an effective epimorphism in $\bH_\infty$.

The action of $N \HLB^H$ on $N \Sym(\CG)_1$ is principal with respect to $N p_1$ as was shown in~\cite[Thm.~5.27]{BMS:Sm2Grp} (there, the principality condition was shown on the level of the sequence~\eqref{eq:Sym 2Grp SES} of smooth 2-groups---this suffices for the $\infty$-categorical context used here because of Lemma~\ref{st:principality condition} and because the nerve functor is a right adjoint).
Therefore, the sequence~\eqref{eq:Sym_G String sequence} is a group extension in $\bH_\infty$.
\end{proof}

It thus remains to show that the image of the sequence~\eqref{eq:Sym_G String sequence} under $\rmS_e$ is a string group extension in $\bS$.
To that end, we first show the following lemma:

\begin{lemma}
\label{st:I-loc eq NHLB^H to BU(1)}
Let $M$ be a connected manifold with $\rmH^2(M; \ZN) \cong 0$.
\begin{myenumerate}
\item The object $(N \HLB^M)_1 = N (\HLB^M)_1 \in \bH_\infty$ is equivalent to $\rmB( \ul{\sfU(1)}^{\ul{M}})$.

\item If $M$ is additionally simply connected, $(N \HLB^M)_1 \in \bH_\infty$ is $I$-locally equivalent to \smash{$\rmB \ul{\sfU(1)} \in \bH_\infty$}.
\end{myenumerate}
Both equivalences are even established by morphisms of group objects in $\bH_\infty$.
\end{lemma}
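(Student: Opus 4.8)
The plan is to establish part~(1) by an objectwise analysis of the symmetric monoidal groupoid $\HLB^X$, and then to deduce part~(2) by combining part~(1) with a smooth homotopy equivalence $\ul{\sfU(1)}^{\ul X}\simeq\ul{\sfU(1)}$.

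For part~(1), I would first evaluate $\HLB^X$ on a cartesian space $c\in\Cart$: the groupoid $\HLB^X(c)=\HLB(c\times X)$ is symmetric monoidal under $\otimes$, and since $c$ is contractible the projection $c\times X\to X$ induces an isomorphism $\rmH^2(c\times X;\ZN)\cong\rmH^2(X;\ZN)\cong 0$. By~\eqref{eq:DD class} every hermitean line bundle on $c\times X$ is therefore trivialisable, so $\HLB(c\times X)$ is a connected groupoid, and the automorphism group of its monoidal unit $\One$ is the abelian group $C^\infty(c\times X,\sfU(1))=\ul{\sfU(1)}^{\ul X}(c)$ of gauge transformations. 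A connected symmetric monoidal (Picard) groupoid is canonically equivalent, as a Picard groupoid, to the one-object delooping of the abelian group $\Aut(\One)$; applying this objectwise and naturally in $c$ yields an equivalence $\HLB^X\simeq\rmB(\ul{\sfU(1)}^{\ul X})$ of presheaves of Picard groupoids on $\Cart$. Since the nerve functor $N$ is a right adjoint it preserves finite products and the final object, hence sends this to an equivalence of group objects $N\HLB^X\simeq\rmB(\ul{\sfU(1)}^{\ul X})$ in $\bH_\infty$; here I use that for the discrete abelian group object $\ul{\sfU(1)}^{\ul X}$ the delooping in $\bH_\infty$ is computed objectwise as the nerve of the one-object groupoid, and the compatibility with group structures can be checked on underlying objects by Lemma~\ref{st:equivs of Gpd and Grp objects}(2).

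For part~(2), by part~(1) it suffices to produce a morphism of group objects $\ul{\sfU(1)}\to\ul{\sfU(1)}^{\ul X}$ that is an $I$-local equivalence and then to deloop it. I would take $a\colon\ul{\sfU(1)}\to\ul{\sfU(1)}^{\ul X}$ to be the inclusion of constant functions and $b\coloneqq\ev_{x_0}\colon\ul{\sfU(1)}^{\ul X}\to\ul{\sfU(1)}$ the evaluation at a basepoint $x_0\in X$; both are homomorphisms of abelian group objects and satisfy $b\circ a=\id$. It then remains to exhibit a smooth homotopy $a\circ b\simeq\id$. Given a plot $G\colon c\times X\to\sfU(1)$, the map $(c,x)\mapsto G(c,x)\,G(c,x_0)^{-1}$ is trivial along $c\times\{x_0\}$, and since $c\times X$ is simply connected (as $c$ is contractible and $\pi_1(X)=0$) it lifts uniquely and smoothly to a map $\tilde h_G\colon c\times X\to\RN$ vanishing on $c\times\{x_0\}$. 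Choosing a smooth $\rho\colon\RN\to[0,1]$ with $\rho(0)=0$ and $\rho(1)=1$, the assignment
\begin{equation}
	H(G,t)(c,x)\coloneqq G(c,x_0)\cdot\exp\big(2\pi\iu\,\rho(t)\,\tilde h_G(c,x)\big)
\end{equation}
defines a morphism $H\colon\ul{\sfU(1)}^{\ul X}\times\ul\RN\to\ul{\sfU(1)}^{\ul X}$ in $\bH_\infty$ interpolating between $a\circ b$ (at $\rho=0$) and $\id$ (at $\rho=1$).

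The main obstacle is precisely the smoothness and naturality of the lift $\tilde h_G$ in the family parameter $c$: this is what forces the hypothesis $\pi_1(X)=0$, since it is the simple-connectivity of the total space $c\times X$ that guarantees a global smooth lift along the covering $\RN\to\sfU(1)$ (objectwise existence alone would not assemble into a morphism of $\bH_\infty$). Once $H$ is in hand, smoothly homotopic morphisms become equal after localising at $I=\{c\times\RN\to c\}$, because $\Loc$ preserves finite products (Proposition~\ref{st:Loc pres fin prods}) and hence inverts the two endpoint inclusions $\ul{\sfU(1)}^{\ul X}\rightrightarrows\ul{\sfU(1)}^{\ul X}\times\ul\RN$; thus $a$ and $b$ are mutually inverse in $L_I\bH_\infty$, so $a$ is an $I$-local equivalence of group objects. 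Finally, applying the delooping functor and using that $\rmS_e\simeq\Pi$ preserves finite products and geometric realisations (so that $\rmS_e\,\rmB\simeq\rmB\,\rmS_e$ by Proposition~\ref{st:preservation of Grp and B}) together with the fact that $\rmS_e$ preserves and reflects $I$-local equivalences (Theorem~\ref{st:S_e Thm}(1)), I conclude that $\rmB a$ is an $I$-local equivalence; composing it with the equivalence from part~(1) yields the desired $I$-local equivalence $\rmB\ul{\sfU(1)}\to N\HLB^X$ of group objects in $\bH_\infty$.
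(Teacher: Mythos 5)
Your proof is correct, and its overall skeleton coincides with the paper's: part~(1) via objectwise trivialisability of line bundles on $c \times X$ (using contractibility of $c$ and $\rmH^2(X;\ZN) \cong 0$) identifying $\HLB^X(c)$ with the delooping of $C^\infty(c \times X, \sfU(1))$, and part~(2) via a smooth homotopy equivalence $\ul{\sfU(1)}^{\ul{X}} \simeq \ul{\sfU(1)}$ which is then delooped. Two of your sub-arguments, however, genuinely diverge from the paper's. First, where the paper simply cites \cite[Lemma~8.9]{BMS:Sm2Grp} for the smooth homotopy equivalence and \cite[Cor.~3.16]{Bun:Sm_Spaces} for the fact that smooth homotopy equivalences are $I$-local equivalences, you construct the homotopy from scratch by lifting $G(u,x)G(u,x_0)^{-1}$ along the universal covering $\RN \to \sfU(1)$, using that $c \times X$ is simply connected; this is self-contained, makes the role of the hypothesis $\pi_1(X) = 0$ completely transparent (including the subtle point that the lift must be natural in the plot, not just exist objectwise), and your rederivation of ``smoothly homotopic maps agree after $\Loc$'' from Proposition~\ref{st:Loc pres fin prods} is sound, since $Y \times \ul{\RN} \to Y$ lies in $W_I$ as a finite product of $I$-local equivalences. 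Second, to deloop the $I$-local equivalence of group objects, the paper argues internally to $\bH_\infty$: it uses closure of $W_I$ under finite products to get a levelwise $I$-local equivalence of simplicial objects, and then strong saturation of $W_I$ (stability under colimits, \cite[Lemma~5.5.4.11]{Lurie:HTT}) to conclude for the geometric realisation. You instead detect the $I$-local equivalence externally through $\rmS_e$: since $\rmS_e$ preserves finite products and geometric realisations, it commutes with $\rmB$ (Proposition~\ref{st:preservation of Grp and B}), and since it preserves \emph{and reflects} $I$-local equivalences (Theorem~\ref{st:S_e Thm}(1)), the equivalence $\rmB(\rmS_e a)$ in $\bS$ forces $\rmB a$ to be an $I$-local equivalence. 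Both mechanisms are valid; the paper's is intrinsic and works without the cohesion results, while yours trades the saturation machinery for the already-established properties of $\rmS_e$, which is arguably more economical in the context of this paper. The only gloss worth tightening is in part~(1): to get naturality in $c$ despite non-canonical trivialisations, one should take the morphism in the direction $\rmB(\ul{\sfU(1)}^{\ul{X}}) \to N\,\HLB^X$ given by the trivial bundle (whose pullbacks are canonically trivial) and note it is an objectwise equivalence---but the paper is equally terse on this point, so this is not a gap relative to the paper's own proof.
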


Since $\rmS_e$ maps $I$-local equivalences in $\bH_\infty$ to equivalences of spaces, applying Lemma~\ref{st:I-loc eq NHLB^H to BU(1)} to $M = H_1$ establishes axiom~(1) of Definition~\ref{def:String(H) in Spaces} for the image of the sequence~\eqref{eq:Sym_G String sequence} under the functor $\rmS_e$.
Note that for a simplicial object $X \in \Fun(\bbDelta^\opp, \bH)$ in an $\infty$-topos $\bH$ and an object $B \in \bH$, we can form the level-wise exponential $X^B \in \Fun(\bbDelta^\opp, \bH)$.
There are canonical equivalences $(X^B)_n \simeq (X_n)^B$.
Thus, we can simply write $X_n^B$.
If $X$ is a groupoid or group object, then so is $X^B$.

\begin{proof}
We proceed in parallel to the proof of~\cite[Thm.~8.7]{BMS:Sm2Grp}:
since any $c \in \Cart$ is contractible and since $\rmH^2(M; \ZN) \cong 0$, it follows that any hermitean line bundle on $c  \times M$ is trivialisable.
Consequently, the groupoid $\HLB^M(c)$ is equivalent to the groupoid with one object and morphisms given by the group \smash{$\ul{\sfU(1)}^{\ul{M}}(c)$} of smooth maps from $c \times M$ to $\sfU(1)$.
This induces an equivalence \smash{$(N \HLB^M)_1 \simeq \rmB(\ul{\sfU(1)}^{\ul{M}})$} in $\bH_\infty$, which extends to a morphism of group objects in $\bH_\infty$.
This proves~(1).

Next, since $\pi_1(M)$ is trivial, there exists a smooth homotopy equivalence \smash{$\ev_{x,1} \colon \ul{\sfU(1)}{}_1^{\ul{M}} \to \ul{\sfU(1)}{}_1$}, given by restricting a smooth map $c \times M \to \sfU(1)$ to $c \times \{x\}$, where $x \in M$ is any point.
A homotopy inverse to $\ev_{x,1}$ is given by pulling a smooth map $c \to \sfU(1)$ back along the projection $c \times M \to c$~\cite[Lemma~8.9]{BMS:Sm2Grp}.
In particular, $\ev_{x,1}$ is an $I$-local equivalence~\cite[Cor.~3.16]{Bun:Sm_Spaces}.

Observe that $\ev_{x,1}$ induces a morphism of group objects
\begin{equation}
	\ev_x \colon \ul{\sfU(1)}^{\ul{M}} \longrightarrow \ul{\sfU(1)}\,.
\end{equation}
Since $\ev_{x,1}$ is an $I$-local equivalence in $\bH_\infty$ and $I$-local equivalences are closed under finite products (Proposition~\ref{st:Loc pres fin prods}), the morphism $\ev_x$ is a levelwise $I$-local equivalence of simplicial objects in $\bH_\infty$.

Further, the class $W_I$ of $I$-local equivalences in $\bH_\infty$ is strongly saturated~\cite[Lemma~5.5.4.11]{Lurie:HTT}.
In particular, the full subcategory of $\Fun(\Delta^1, \bH_\infty)$ on the $I$-local equivalences is stable under colimits.
Therefore, taking the colimit in $\bH_\infty$ of simplicial objects (i.e.~taking geometric realisations), we obtain an $I$-local equivalence
\begin{equation}
	\rmB \ev_x \colon \rmB \big( \ul{\sfU(1)}^{\ul{M}} \big) \longrightarrow \rmB \ul{\sfU(1)}
\end{equation}
in $\bH_\infty$.
Composing with the morphism constructed in part~(1), we now obtain the desired $I$-local equivalence $N \HLB^M \longrightarrow \rmB \ul{\sfU(1)}$ in $\bH_\infty$.
\end{proof}

We are thus left to show that the sequence of group objects in $\bS$ obtained by applying the functor $\rmS_e$ to the sequence~\eqref{eq:Sym_G String sequence} of group objects in $\bH_\infty$ satisfies axiom~(2) of Definition~\ref{def:String(H) in Spaces}.
That is, we have to show that the principal $\infty$-bundle of spaces
\begin{equation}
	\big( \rmS_e N \Sym(\CG)_1 \big) \dslash \big( \rmS_e N\, \HLB^H \big)
	\longrightarrow \rmS_e \ul{H}_1
\end{equation}
represents a generator of $\rmH^3(H; \ZN) \cong \ZN$.
This is best checked using \v{C}ech cohomology.

Recall the Grothendieck coverage $\tau$ of differentiably good open coverings on $\Cart$ from Remark~\ref{rmk:generalising string group def}.

\begin{lemma}
\label{st:B^nU(1)^N}
Let $M \in \Mfd$ be a simply connected manifold, and let $k \in \NN_0$.
Then,
\begin{myenumerate}
\item In $\bH_\infty$, there is an $I$-local equivalence
\begin{equation}
	\rmB^k (\ul{\sfU(1)}^{\ul{M}}) \simeq \rmB^k \ul{\sfU(1)}\,.
\end{equation}

\item The presheaf $\rmB^k (\ul{\sfU(1)}^{\ul{M}})$ satisfies descent with respect to the Grothendieck coverage $\tau$ of differentiably good open coverings on $\Cart$.
\end{myenumerate}
\end{lemma}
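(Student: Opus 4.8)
The plan is to handle the two parts separately: part~(1) is a formal consequence of Lemma~\ref{st:I-loc eq NHLB^H to BU(1)} obtained by iterated delooping, whereas part~(2) requires genuine sheaf-theoretic input. For part~(1) I would argue by induction on $k$. The base case $k=1$ is precisely the $I$-local equivalence $\rmB\,\ev_x \colon \rmB(\ul{\sfU(1)}^{\ul X}) \to \rmB\ul{\sfU(1)}$ produced in the proof of Lemma~\ref{st:I-loc eq NHLB^H to BU(1)}. For the inductive step, note that $\ev_x \colon \ul{\sfU(1)}^{\ul X} \to \ul{\sfU(1)}$ is a homomorphism of abelian (indeed $\bbE_\infty$) group objects in $\bH_\infty$ and is an $I$-local equivalence; hence each iterated delooping $\rmB^j(\ul{\sfU(1)}^{\ul X})$ and $\rmB^j\ul{\sfU(1)}$ is again an abelian group object, and the induced map on the bar-construction simplicial objects is a levelwise $I$-local equivalence by closure of $I$-local equivalences under finite products (Proposition~\ref{st:Loc pres fin prods}). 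Since the class $W_I$ of $I$-local equivalences is strongly saturated, and in particular closed under geometric realisations~\cite{Lurie:HTT}, passing to the colimit over $\bbDelta^\opp$ shows that $\rmB^{j+1}(\ul{\sfU(1)}^{\ul X}) \to \rmB^{j+1}\ul{\sfU(1)}$ is an $I$-local equivalence, completing the induction.

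For part~(2) I would first reduce descent to a cohomological vanishing statement. Write $\mathcal{A} \coloneqq \ul{\sfU(1)}^{\ul X}$, which is a sheaf of abelian groups on $(\Cart, \tau)$: over $c \in \Cart$ it takes the value of the abelian group $C^\infty(c \times X, \sfU(1))$, and gluing is local. Since colimits in $\bH_\infty$ are computed objectwise, $\rmB^k \mathcal{A}$ is objectwise the Eilenberg--MacLane presheaf $c \mapsto K(\mathcal{A}(c), k)$. For a differentiably good open cover $\{c_a \to c\}_{a \in \Lambda}$ with \v{C}ech nerve $c_\bullet$, the descent condition asks that $K(\mathcal{A}(c),k) \to \holim_{\bbDelta}\, K(\mathcal{A}(c_\bullet), k)$ be an equivalence. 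As the target is the homotopy limit of a cosimplicial Eilenberg--MacLane space associated with the \v{C}ech cochain complex of $\mathcal{A}$, its homotopy groups are $\pi_i \cong \check{H}^{k-i}(\{c_a\}; \mathcal{A})$. Thus descent is equivalent to the sheaf condition $\check{H}^0(\{c_a\};\mathcal{A}) \cong \mathcal{A}(c)$ together with the vanishing $\check{H}^j(\{c_a\};\mathcal{A}) = 0$ for all $j \geq 1$.

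It remains to establish this higher vanishing, and here the hypothesis that $X$ be simply connected is crucial. I would use the exponential sequence: because each intersection $c_{a_0 \cdots a_p}$ is a cartesian space and $X$ is simply connected, $c_{a_0\cdots a_p} \times X$ is connected with $H^1(-;\ZN) = 0$, so every smooth map to $\sfU(1)$ lifts to $\RN$. Hence
\begin{equation}
	0 \longrightarrow \ul{\ZN} \longrightarrow \ul{\RN}^{\ul X} \longrightarrow \ul{\sfU(1)}^{\ul X} \longrightarrow 0
\end{equation}
is exact sectionwise on cartesian spaces, giving a short exact sequence of \v{C}ech cochain complexes and hence a long exact sequence in \v{C}ech cohomology. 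Now $\ul{\RN}^{\ul X}$ is a fine sheaf --- it is a module over the soft sheaf $c \mapsto C^\infty(c, \RN)$, with partitions of unity on $c$ pulled back along $c \times X \to c$ --- so $\check{H}^j(\{c_a\}; \ul{\RN}^{\ul X}) = 0$ for $j \geq 1$; and since the cover is good and $c$ is contractible, $\check{H}^j(\{c_a\}; \ul{\ZN}) \cong H^j(c;\ZN) = 0$ for $j \geq 1$. The long exact sequence then forces $\check{H}^j(\{c_a\};\ul{\sfU(1)}^{\ul X}) = 0$ for all $j \geq 1$, which is exactly what was needed. The main obstacle I anticipate is the reduction in the previous paragraph --- carefully identifying the homotopy limit over the \v{C}ech nerve of the cosimplicial Eilenberg--MacLane object with the \v{C}ech cohomology of $\mathcal{A}$, via the Bousfield--Kan spectral sequence or a Dold--Kan argument --- while the exponential-sequence and fineness inputs are standard, their only role being to feed simple connectivity of $X$ into the vanishing of $\check{H}^{\geq 1}$.
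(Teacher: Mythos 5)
Your part~(1) is essentially the paper's own argument: the paper disposes of it in one line as ``an iteration of the argument in the proof of Lemma~\ref{st:I-loc eq NHLB^H to BU(1)}(2)'', which is exactly your induction --- levelwise $I$-local equivalence of the bar constructions via closure of $W_I$ under finite products (Proposition~\ref{st:Loc pres fin prods}), then strong saturation of $W_I$ to pass to the geometric realisation.

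For part~(2) you take a genuinely different, and correct, route. The paper argues by induction on $k$: for bijectivity on $\pi_0$ it identifies $\pi_0$ of the descent object with $\check{\rmH}^{k+1}\big(\scU;\ul{\sfU(1)}^{\ul{X}}\big)$ and kills this single group using the exponential sequence, fineness of $\ul{\RN}^{\ul{X}}$, and Brylinski's comparison between \v{C}ech and sheaf cohomology for good covers; for the higher homotopy groups it loops, using that $\Omega$ commutes with the limit and $\Omega \rmB^{k+1} \simeq \rmB^k$, and invokes the induction hypothesis that $\rmB^k(\ul{\sfU(1)}^{\ul{X}})$ is already a sheaf. You instead compute \emph{all} homotopy groups of the totalisation at once, $\pi_i \cong \check{\rmH}^{k-i}\big(\scU;\ul{\sfU(1)}^{\ul{X}}\big)$, and prove the full vanishing $\check{\rmH}^{j} = 0$ for $j \geq 1$ by a cochain-level long exact sequence: sectionwise exactness of $\ZN \to \ul{\RN}^{\ul{X}} \to \ul{\sfU(1)}^{\ul{X}}$ over the intersections (which are cartesian, so this is where goodness of the cover enters on your side), partitions of unity subordinate to the cover to kill $\check{\rmH}^{\geq 1}$ of $\ul{\RN}^{\ul{X}}$ directly, and goodness again for the constant sheaf $\ZN$. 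The geometric input is identical in both proofs (simple connectivity of $X$ enters through the lift of $\sfU(1)$-valued maps to $\RN$, exactly as in the paper), so the trade is structural: your route needs the identification of the homotopy groups of the totalised cosimplicial Eilenberg--MacLane object with \v{C}ech cohomology in all degrees --- which you rightly flag as the main obstacle, and which is standard for cosimplicial simplicial abelian groups via Dold--Kan and the total complex, with no convergence issue since everything is bounded --- whereas the paper needs this identification only on $\pi_0$ and replaces the remaining degrees by the loop-space induction; conversely, you avoid citing Brylinski's theorem by doing the vanishing at the level of \v{C}ech cochains for the fixed cover, which is arguably more elementary. Two points worth making explicit in a final write-up: the map on $\pi_k$ induced by $q^*$ is the canonical restriction $\ul{\sfU(1)}^{\ul{X}}(c) \to \check{\rmH}^0\big(\scU;\ul{\sfU(1)}^{\ul{X}}\big)$ (by naturality), so the degree-zero sheaf condition really does close the argument; and only the vanishing in degrees $1 \leq j \leq k$ is needed for the equivalence, though proving it for all $j \geq 1$, as you do, costs nothing.
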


\begin{proof}
Ad~(1):
This is an iteration of the argument in the proof of Lemma~\ref{st:I-loc eq NHLB^H to BU(1)}(2).

Ad~(2):
We prove this claim by induction.
For $k = 0$, we have to check that the functor $\Cart^\opp \to \bS$, $c \mapsto \Mfd(c \times M, \sfU(1))$ satisfies descent with respect to good open coverings of $c$.
However, this follows directly from the fact that, for any manifold $Y$, the functor
\begin{equation}
	\Op(Y)^\opp \to \Set\,, \qquad U \mapsto  \Mfd \big( U, \sfU(1) \big)
\end{equation}
defines a sheaf on $Y$, where $\Op(Y)$ is the category of open subsets of $Y$ and their inclusions.

Suppose that $\rmB^l(\ul{\sfU(1)}^{\ul{M}})$ is a sheaf on $\Cart$ for all $l = 0, \ldots, k$.
Let $c \in \Cart$, and let $\scU = \{c_a \hookrightarrow c\}_{a \in \Lambda}$ be a differentiably good open covering of $c$.
We have to show that the canonical morphism
\begin{equation}
\label{eq:q^* for B^k U(1)^M}
	q^* \colon \rmB^{k+1} \big( \ul{\sfU(1)}^{\ul{M}} \big)(c)
	\longrightarrow \underset{n \in \bbDelta}{\lim}^\bS\, \ul{\bH}_\infty \big( \cC \scU_n, \rmB^{k+1} (\ul{\sfU(1)}^{\ul{M}}) \big)
\end{equation}
is an equivalence of spaces.
Here, $\cC \scU \in \Fun(\bbDelta^\opp, \bH_\infty)$ is the \v{C}ech nerve of the covering $\scU$.

We first show that $q^*$ is essentially surjective; that is, it induces a bijection on isomorphism classes of objects.
Since limits and colimits in $\bH_\infty = \Fun(\Cart^\opp, \bS)$ are computed pointwise, we have isomorphisms
\begin{equation}
	\pi_0 \Big( \rmB^{k+1} \big( \ul{\sfU(1)}^{\ul{M}} \big) \Big) (c)
	\cong \pi_0 \Big( \rmB^{k+1} \big( \ul{\sfU(1)}^{\ul{M}}(c) \big) \Big)
	 = *\,.
\end{equation}
On the other hand, we have that
\begin{equation}
\label{eq:iso classes and coho}
	\pi_0\, \underset{n \in \bbDelta}{\lim}^\bS\, \ul{\bH}_\infty \big( \cC \scU_n, \rmB^{k+1} (\ul{\sfU(1)}^{\ul{M}}) \big)
	\cong \check{\rmH}^{k+1} \big( \scU; \ul{\sfU(1)}^{\ul{M}} \big)\,,
\end{equation}
where on the right-hand side we have the usual \v{C}ech cohomology group with respect to the covering $\scU$ of the sheaf of abelian groups on $c$ given by
\begin{equation}
	\Op(c)^\opp \to \Ab\,, \qquad U \mapsto \Mfd \big( U \times M, \sfU(1) \big)\,.
\end{equation}
By a slight abuse of notation, we also denote this sheaf by \smash{$\ul{\sfU(1)}^{\ul{M}}$}.
We claim that the right-hand side of~\eqref{eq:iso classes and coho} is further isomorphic to $\rmH^{k+2}(c; \ZN) \cong *$.

First, the \v{C}ech cohomology groups \smash{$\check{\rmH}^{k+1} (\scU; \ul{\sfU(1)}^{\ul{M}})$} are isomorphic to the sheaf cohomology groups of \smash{$\ul{\sfU(1)}^{\ul{M}}$}:
since the covering $\scU$ of $c$ is differentiably good, it follows from~\cite[Thm.~1.3.6]{Brylinski:Loops_and_GeoQuan} that there is a canonical isomorphism
\begin{equation}
	\check{\rmH}^k \big( \scU; \ul{\sfU(1)}^{\ul{M}} \big)
	\cong \rmH^k \big( c; \ul{\sfU(1)}^{\ul{M}} \big)\,.
\end{equation}
Next, we observe that since $M$ is simply connected, there is a short exact sequence
\begin{equation}
	\ZN \longrightarrow \ul{\RN}^{\ul{M}} \longrightarrow \ul{\sfU(1)}^{\ul{M}}\,.
\end{equation}
We further observe that the sheaf $\ul{\RN}^{\ul{M}}$ is fine (it admits partitions of unity, for instance those induced from the canonical map $\ul{\RN} \to \ul{\RN}^{\ul{M}}$) when seen as a sheaf on the open subsets of a manifold.
Therefore, for any manifold $Y$, we have a canonical isomorphism
\begin{equation}
	\rmH^k \big( Y; \ul{\sfU(1)}^{\ul{M}} \big) \cong \rmH^{k+1}(Y; \ZN)
\end{equation}
for every $k \geq 1$.
We thus arrive at
\begin{equation}
	\pi_0\, \underset{n \in \bbDelta}{\lim}^\bS\, \ul{\bH}_\infty \big( \cC \scU_n, \rmB^{k+1} (\ul{\sfU(1)}^{\ul{M}}) \big)
	\cong \rmH^{k+2}(c; \ZN)
	\cong *\,,
\end{equation}
for each cartesian space $c \in \Cart$ and each $k > 0$.
This completes the proof that the morphism $q^*$ from~\eqref{eq:q^* for B^k U(1)^M} is bijective on connected components.

It remains to check that the morphism $q^*$ from~\eqref{eq:q^* for B^k U(1)^M} is an isomorphism on all higher homotopy groups in $\bH_\infty = \scP(\Cart)$.
We will achieve this by comparing the automorphisms of the unique object in the source and target space of $q^*$.
On the source side, this automorphism space is given as the pullback of spaces
\begin{equation}
\begin{tikzcd}
	\Omega \rmB^{k+1} \big( \ul{\sfU(1)}^{\ul{M}} \big)(c) \ar[r] \ar[d] & * \ar[d]
	\\
	* \ar[r] & \rmB^{k+1} \big( \ul{\sfU(1)}^{\ul{M}} \big)(c)
\end{tikzcd}
\end{equation}
and there is a canonical equivalence in $\bH_\infty$,
\begin{equation}
	\Omega \rmB^{k+1} \big( \ul{\sfU(1)}^{\ul{M}} \big)(c)
	\simeq \rmB^k \big( \ul{\sfU(1)}^{\ul{M}}(c) \big)\,.
\end{equation}
On the target side of $q^*$, the automorphism space of the (essentially) unique object is the pullback
\begin{equation}
\begin{tikzcd}
	\Omega\, \underset{n \in \bbDelta}{\lim}^\bS\, \ul{\bH}_\infty \big( \cC \scU_n, \rmB^{k+1} (\ul{\sfU(1)}^{\ul{M}}) \big) \ar[r] \ar[d] & * \ar[d]
	\\
	* \ar[r] & \underset{n \in \bbDelta}{\lim}^\bS\, \ul{\bH}_\infty \big( \cC \scU_n, \rmB^{k+1} (\ul{\sfU(1)}^{\ul{M}}) \big)
\end{tikzcd}
\end{equation}
Since limits in $\bH_\infty$ are computed objectwise, there are canonical equivalences
\begin{align}
	\Omega\, \underset{n \in \bbDelta}{\lim}^\bS\, \ul{\bH}_\infty \big( \cC \scU_n, \rmB^{k+1} (\ul{\sfU(1)}^{\ul{M}}) \big)
	&\simeq \underset{n \in \bbDelta}{\lim}^\bS\, \ul{\bH}_\infty \big( \cC \scU_n, \Omega\, \rmB^{k+1} (\ul{\sfU(1)}^{\ul{M}}) \big)
	\\*
	&\simeq \underset{n \in \bbDelta}{\lim}^\bS\, \ul{\bH}_\infty \big( \cC \scU_n, \rmB^k (\ul{\sfU(1)}^{\ul{M}}) \big)\,.
\end{align}
However, by the induction hypothesis, the presheaf $\rmB^k (\ul{\sfU(1)}^{\ul{M}})$ is a sheaf, so that $q^*$ induces an equivalence between the automorphism spaces.
This proves that $q^*$ is indeed an equivalence.
\end{proof}

An application of~\cite[Thm.~1.1]{Bunk:Higher_Sheaves} now implies that, for each simply connected manifold $M$, the presheaf of spaces
\begin{equation}
	\ul{\bH}_\infty \big(-, \rmB^n \big( \ul{\sfU(1)}^{\ul{M}} \big) \big) \colon \Mfd^\opp \longrightarrow \bS
\end{equation}
satisfies descent with respect to open coverings (and even surjective submersions).
Consequently, given any open covering $\scV = \{c_a \hookrightarrow M\}_{a \in \Lambda}$, whose \v{C}ech nerve we denote by $\cC \scV \to \ul{M}$, the canonical morphism
\begin{equation}
	\ul{\bH}_\infty \big( \ul{M}, \rmB^n (\ul{\sfU(1)}^{\ul{M}}) \big)
	\cong \underset{\bbDelta}{\lim}^\bS\, \ul{\bH}_\infty \big( \cC \scV, \rmB^n (\ul{\sfU(1)}^{\ul{M}}) \big)
\end{equation}
is an equivalence of spaces.
Therefore, there is an isomorphism
\begin{equation}
\label{eq:pi_0H_oo and Cech coho}
	\pi_0 \ul{\bH}_\infty \big( \ul{M}, \rmB^n (\ul{\sfU(1)}^{\ul{M}}) \big)
	\cong \check{\rmH}^n \big( M; \ul{\sfU(1)}^{\ul{M}} \big)\,,
\end{equation}
which can be represented explicitly by composing a morphism \smash{$\ul{M} \to \rmB^n (\ul{\sfU(1)}^{\ul{M}})$} with any \v{C}ech nerve $\cC \scV \to \ul{M}$ of an open covering of $M$.
(Alternatively, this can be seen directly in the presentation of $\bH_\infty$ by the projective model structure on simplicial presheaves on $\Cart$.)

Now let us return to the case where $M = H_1$ is the manifold underlying the compact, simple, simply connected Lie group $(H_1, \cdot, e_H)$.
Let \smash{$\ev_{e,1} \colon \ul{\sfU(1)}{}^{\ul{H}_1}_1 \longrightarrow \ul{\sfU(1)}{}_1$} be the morphism induced by pullback along the base-point inclusion $e_H \colon * \hookrightarrow \ul{H}_1$.
It induces a morphism of group objects \smash{$\ev_e \colon \ul{\sfU(1)}^{\ul{H}_1} \longrightarrow \ul{\sfU(1)}$}.
We obtain a commutative diagram
\begin{equation}
\label{eq:Buns in H and Cech coho}
\begin{tikzcd}[column sep=2.5cm, row sep=0.8cm]
	\pi_0 \ul{\bH}_\infty \big( \ul{H}_1, \rmB^n (\ul{\sfU(1)}^{\ul{H}_1}) \big)
	\ar[d, "\cong"]	\ar[r, "(\rmB^n \ev_e)_*"]
	& \pi_0 \ul{\bH}_\infty \big( \ul{H}_1, \rmB^n \ul{\sfU(1)} \big) \ar[d, "\cong"]
	\\
	\check{\rmH}^n \big( H_1; \ul{\sfU(1)}^{\ul{H}_1} \big) \ar[r, "(\ev_e)_*"]
	& \check{\rmH}^n \big( H; \ul{\sfU(1)} \big)
\end{tikzcd}
\end{equation}
It was shown in~\cite[Prop.~8.11]{BMS:Sm2Grp} that the bottom horizontal morphism is an isomorphism for all $n \in \NN$ (with $n > 0$); thus, so is the top horizontal morphism.
Consider the morphisms
\begin{align}
\label{eq:Buns in H to buns in S}
	\pi_0 \ul{\bH}_\infty \big( \ul{H}_1, \rmB^n (\ul{\sfU(1)}^{\ul{H}_1}) \big)
	\longrightarrow \ &\pi_0 \ul{\bS} \big( \rmS_e \ul{H}_1, \rmS_e \rmB^n (\ul{\sfU(1)}^{\ul{H}_1}) \big)
	\\*
	&\cong \pi_0 \ul{\bS} \big( H_1, \rmB^n \rmS_e (\ul{\sfU(1)}^{\ul{H}_1}) \big)
	\\
	&\cong \pi_0 \ul{\bS} \big( H_1, \rmB^n \rmS_e \ul{\sfU(1)} \big)
	\\*
	&\cong \pi_0 \ul{\bS} \big( H_1, \rmB^n \sfU(1) \big)\,.
\end{align}
The first morphism is applying the functor $\rmS_e$.
For the second morphism we have used~\cite[Thm.~5.1]{Bun:Sm_Spaces}: for every manifold $Y \in \Mfd$, there is a canonical equivalence $\rmS_e \ul{Y} \simeq Y$ in $\bS$.
Further, here we have used that $\rmS_e$ commutes with $\rmB$ (Proposition~\ref{st:preservation of Grp and B}).
For the third morphism, we have used that the inclusion $\ul{\sfU(1)} \hookrightarrow \ul{\sfU(1)}^{\ul{H}_1}$ is an $I$-local equivalence in $\bH_\infty$:
since $H_1$ is connected and simply connected, this morphism is a smooth homotopy equivalence by~\cite[Lemma~8.9]{BMS:Sm2Grp}, and by~\cite[Cor.~3.16]{Bun:Sm_Spaces} any smooth homotopy equivalence is an $I$-local equivalence.
The last morphism again uses~\cite[Thm.~5.1]{Bun:Sm_Spaces}.
Since $\rmS_e$ preserves finite products, the equivalence $\rmS_e \ul{\sfU(1)}{}_1 \simeq \sfU(1)_1$ in $\bS$ is compatible with the group structure%
\footnote{This can also be seen directly: for any manifold $M$, the comparison map $\rmS_e M \to \Sing(M)$ sends a smooth map $\Delta_e^k \to M$ to the restriction $|\Delta^k| \to M$---see~\cite[Secs.~4, 5]{Bun:Sm_Spaces} for details.}
on $\sfU(1)$.

We can describe the map~\eqref{eq:Buns in H to buns in S} more explicitly as follows:
we have already seen above that any element in \smash{$\pi_0 \ul{\bH}_\infty (\ul{H}_1, \rmB^n(\ul{\sfU(1)}^{\ul{H}_1}))$} can be described as a smooth $\sfU(1)^{H_1}$-valued \v{C}ech cocycle with respect to a (differentiably good) open cover $\scV$ of $H_1$.
Under the map~\eqref{eq:Buns in H to buns in S}, these data are sent first to the same \v{C}ech cocycle, but seen as a map of spaces, and then this resulting \v{C}ech cocycle is composed with the evaluation $\sfU(1)^{H_1}_1 \to \sfU(1)_1$ at the unit element in $H$.
Therefore, using the canonical isomorphism $\pi_0 \ul{\bS} ( H_1, \rmB^n \sfU(1)) \cong \check{\rmH}^n(H_1; \sfU(1))$ and combining this with the maps~\eqref{eq:Buns in H and Cech coho} and~\eqref{eq:Buns in H to buns in S} we obtain a commutative diagram of abelian groups
\begin{equation}
\label{eq:S_e, U(1)^H and Cech coho}
\begin{tikzcd}[column sep=1.25cm, row sep=0.8cm]
	\pi_0 \ul{\bH}_\infty( \ul{H}_1, \rmB^n (\ul{\sfU(1)}^{\ul{H}_1}) \big)
	\ar[d] \ar[r]
	& \pi_0 \ul{\bS} \big( H_1, \rmB^n \rmS_e (\ul{\sfU(1)}^{\ul{H}_1}) \big)
	\ar[d, "(\rmB^n \rmS_e(\ev_e))_*"]
	\\
	\check{\rmH}^n \big( H_1; \ul{\sfU(1)}^{\ul{H}_1} \big) \ar[r, "(\ev_e)_*"]
	& \check{\rmH}^n \big( H_1; \sfU(1) \big)
\end{tikzcd}
\end{equation}
In this diagram, the left-hand vertical morphism is invertible as argued before~\eqref{eq:pi_0H_oo and Cech coho}.
The bottom morphism is an isomorphism by~\cite[Prop.~8.11]{BMS:Sm2Grp} and the fact that \v{C}ech cohomology and abelian sheaf cohomology are isomorphic on manifolds.
The right-hand vertical morphism is invertible as a consequence of the isomorphisms in~\eqref{eq:Buns in H and Cech coho} and the fact that the map \smash{$\ev_{e,1} \colon \ul{\sfU(1)}{}^{\ul{H}_1}_1 \to \ul{\sfU(1)}{}_1$} is an $I$-local equivalence.

Combining diagram~\eqref{eq:S_e, U(1)^H and Cech coho} with Proposition~\ref{st:classifying morphisms under L} and Lemma~\ref{st:I-loc eq NHLB^H to BU(1)}, we obtain that the class in $\rmH^3(H_1; \ZN) \cong \check{\rmH}^2(H_1, \sfU(1))$ defined by the $N\HLB^{H_1}$-principal $\infty$-bundle
\begin{equation}
\label{eq:Sym(G) as smooth U(1)^H bundle}
	\big(N \Sym(\CG)_1 \big) \dslash N \HLB^{H_1}
	\longrightarrow \ul{H}_1
\end{equation}
in $\bH_\infty$ agrees with the class defined by the principal $\infty$-bundle
\begin{equation}
	\big( \rmS_e N \Sym(\CG)_1 \big) \dslash \big( \rmS_e N \HLB^{H_1} \big)
	\longrightarrow \rmS_e \ul{H}_1 \simeq H_1
\end{equation}
in $\bS$.
Here we have used that there is an equivalence \smash{$N \HLB^{H_1} \simeq \rmB \ul{\sfU(1)}^{\ul{H}_1}$} in $\Grp(\bH_\infty)$, so that
\begin{equation}
	\pi_0 \ul{\bH}_\infty \big( \ul{H}_1, \rmB\, N\HLB^{H_1} \big)
	\simeq \pi_0 \ul{\bH}_\infty \big( \ul{H}_1, \rmB^2 \ul{\sfU(1)}^{\ul{H}_1} \big) \,.
\end{equation}
(Again, one can alternatively see the coincidence of the cohomology classes more explicitly on the level of \v{C}ech cocycles in the presentation of $\bH_\infty$ by the simplicial model category $\scH_\infty^p$:
a smooth bundle represented by a smooth $\sfU(1)^{H_1}$-valued cocycle on $H$ gets sent to the topological bundle represented by the same \v{C}ech cocycle interpreted as a collection of continuous maps.)
It thus remains to compute the cohomology class associated to these bundles.
In~\cite[Sec.~8]{BMS:Sm2Grp} it has been shown that the class in $\rmH^3(H_1; \ZN)$ of the bundle~\eqref{eq:Sym(G) as smooth U(1)^H bundle} agrees with the class in $\rmH^3(H_1; \ZN)$ that classifies the gerbe $\CG$ under the isomorphism~\eqref{eq:DD class}.
Since we started our construction from a so-called \emph{basic} gerbe, i.e.~one whose Dixmier-Douady class is a generator of $\rmH^3(H_1; \ZN)$, this completes the proof of Theorem~\ref{st:String model from Sym(G)}.

\begin{remark}
We conclude with the following remarks:
\begin{myenumerate}
\item In~\cite{BMS:Sm2Grp}, we suggested the smooth 2-group extension~\eqref{eq:Sym_G String sequence} as a model for the string group extension of $(H_1, \cdot, e_H)$.
However, the necessary formalism to make this precise was not available then---its development was the main goal of the present paper.

\item Moreover, in~\cite[Sec.~5.5]{BMS:Sm2Grp} we also presented a second smooth 2-group extension
\begin{equation}
\label{eq:Des_L extension}
\begin{tikzcd}
	\HLB^{H_1} \ar[r, "i"]
	& \Des_\sfL \ar[r, "p"]
	& \textint \ul{H}
\end{tikzcd}
\end{equation}
of $(H_1, \cdot, e_H)$;
its construction uses a connection on $\CG$ as auxiliary data and relies heavily on a notion of parallel transport on a gerbe $\CG$ with connection, as developed in~\cite{BMS:Sm2Grp}.
The extension~\eqref{eq:Des_L extension} is then obtained via a homotopy-coherent version of an associated bundle construction.
By~\cite[Thm.~5.33]{BMS:Sm2Grp}, there is an equivalence (in $\bH_{\leq 1}$) between the smooth 2-group extension in~\eqref{eq:Des_L extension} and~\eqref{eq:Sym 2Grp SES}, so that we automatically obtain an equivalence between the group objects in $\bH_\infty$ they induce under the nerve functor.
Hence, given the input of a basic gerbe $\CG$ on $H_1$, by Theorem~\ref{st:String model from Sym(G)} the extension~\eqref{eq:Des_L extension} also gives rise to a second, equivalent smooth string group extension
\begin{equation}
\begin{tikzcd}
	N \HLB^{H_1} \ar[r] & N \Des_\sfL \ar[r] & \ul{H}
\end{tikzcd}
\end{equation}
of $(H_1, \cdot, e_H)$, for any compact, simple and simply connected Lie group $(H_1, \cdot, e_H)$.
\qen
\end{myenumerate}
\end{remark}

\begin{appendix}

\section{Actions and category objects}
\label{app:Actions and CatObs}

In this appendix, we prove Theorem~\ref{st:grp actions are gpd obs}; that is, we show that group actions $P \dslash G$ in $\infty$-topoi (as in Definition~\ref{def:oo-group action}) are automatically groupoid objects.

\begin{definition}
\label{def:category object}
A \emph{category object} in an $\infty$-category $\scC$ is a simplicial object $X \in \Fun(\bbDelta^\opp, \scC)$ such that for every $n \in \NN_0$ the pullback $X_1 \times_{X_0} \cdots \times_{X_0} X_1$ exists in $\scC$ and the morphism
\begin{equation}
	X_n \longrightarrow X_1 \underset{X_0}{\times} \cdots \underset{X_0}{\times} X_1\,,
\end{equation}
induced by the spine decomposition $[n] \cong [1] \sqcup_{[0]} \cdots \sqcup_{[0]} [1]$ of finite ordered sets, is an equivalence.
\end{definition}

Suppose $\scC$ has a final object.
In analogy with Definition~\ref{def:Grp(H)}, a \emph{monoid object} in $\scC$ is a category object $M \in \Fun(\bbDelta^\opp, \scC)$ such that $M_0 \simeq *$ is a final object in $\scC$.
As for group objects, it follows that there are canonical natural equivalences $M_n \simeq M_1^{n-1}$.
Therefore, by Lemma~\ref{st:equiv on objs pulls back functoriality} we may assume, without loss of generality, that we have $M_n = M_1^{n-1}$ for any $n \in \NN_0$.
We set $M_1 \dslash M \coloneqq \Dec^0 M \in \Fun(\bbDelta^\opp, \scC)$.
Monoid objects can act on objects in their ambient $\infty$-category.
A monoid action is defined in the same way as a group action (Definition~\ref{def:oo-group action}), but for the reader's convenience, we spell out the definition (compare also~\cite[Def.~4.2.2.2, Rmk.~4.2.2.3]{Lurie:HA}):

\begin{definition}
\label{def:monoid action}
Let $\scC$ be an $\infty$-category with a final object, and let $M$ be a monoid object in $\scC$.
Let $P \in \scC$ be an object in $\scC$.
An \emph{action of $M$ on $P$} is a simplicial object $P \dslash M \in \Fun(\bbDelta^\opp, \scC)$ such that
\begin{myenumerate}
\item for each $n \in \NN_0$, we have $(P \dslash M)_n = P \times M_1^n$,

\item the morphism $d_1 \colon P \times M_1 \to P$ agrees with the projection onto $P$, the morphism $s_0 \colon P \to P \times M_1$ agrees with the morphism $1_P \times (* \to M_1)$, and

\item the collapse morphism $P \to *$ induces a morphism $P \dslash M \to M$ in $\Fun(\bbDelta^\opp, \scC)$.
\end{myenumerate}
\end{definition}

The trivial action of $M$ on the final object $*$ satisfies $* \dslash M = M$ as simplicial objects in $\scC$.
We set $a \coloneqq d_0 \colon P \times M_1 \to P$.
The pasting law for pullbacks implies that there are canonical equivalences of morphisms between $d_0 \colon P \times M_1^n \to P \times M_1^{n-1}$ and \smash{$a \times 1_{M_1^{n-1}} \colon P \times M_1^n \to P \times M_1^{n-1}$}, and similarly between $d_n \colon P \times M_1^n \to P \times M_1^{n-1}$ and the projection onto the first $n$ factors.

\begin{proposition}
\label{st:monoid actions are Cat obs}
Let $\scC$ be an $\infty$-category with pullbacks and a final object, let $M \in \Fun(\bbDelta^\opp, \scC)$ be a monoid object in $\scC$, and let $P \dslash M \in \Fun(\bbDelta^\opp, \scC)$ be an action of $M$ on an object $P \in \scC$.
Then, $P \dslash M$ is a category object in $\scC$.
\end{proposition}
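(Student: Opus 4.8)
The plan is to verify the Segal condition for category objects directly, by induction on $n$, using the pasting law for pullbacks together with the two structural facts recorded after Definition~\ref{def:monoid action}: that the source face $d_1 \colon P \times M \to P$ is the projection $\pr_P$ (axiom~(2)), and that $d_0 \colon P \times M^n \to P \times M^{n-1}$ and $d_n \colon P \times M^n \to P \times M^{n-1}$ are canonically equivalent to $a \times 1_{M^{n-1}}$ and to the projection onto the first $n$ factors, respectively.

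First I would set $X \coloneqq P \dslash M$, so that $X_0 = P$ and $X_1 = P \times M$, and reduce to showing that for every $n \in \NN_0$ the Segal map
\[
X_n \longrightarrow \underbrace{X_1 \times_{X_0} \cdots \times_{X_0} X_1}_{n \text{ factors}}
\]
induced by the spine decomposition is an equivalence. The cases $n \leq 1$ are trivial. For the inductive step I would split $[n]$ into its front face $\{0,\dots,n-1\} \cong [n-1]$ and its final edge $\{n-1,n\} \cong [1]$; since these share the vertex $n-1$, functoriality of $X$ produces a comparison morphism $(d_n, e_n) \colon X_n \longrightarrow X_{n-1} \times_{X_0} X_1$, where $e_n$ is the last-edge map and the pullback is formed along the last-vertex map $X_{n-1} \to X_0$ and along $d_1 \colon X_1 \to X_0$. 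Because the spine of $[n]$ is the spine of $\{0,\dots,n-1\}$ together with this final edge, the full Segal map factors as $(d_n,e_n)$ followed by $\mathrm{Segal}_{n-1} \times_{X_0} \id$. The inductive hypothesis makes $\mathrm{Segal}_{n-1}$ an equivalence, and the functor $(-) \times_{X_0} X_1$ preserves it, so it suffices to prove that $(d_n,e_n)$ is an equivalence.

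The heart of the argument is this last claim, and here the decisive point is that $d_1 = \pr_P$ is a projection. Writing $X_1 = X_0 \times M$ with $d_1$ the first projection, the pullback of a projection is a product, so there is a canonical equivalence $X_{n-1} \times_{X_0} X_1 \simeq X_{n-1} \times M$. Under this identification $(d_n, e_n)$ becomes $(d_n, \pr_M \circ e_n)$. Now $d_n$ is the projection onto the first $n$ factors, and $e_n = (d_0)^{\circ(n-1)}$, so since each $d_0 = a \times 1$ fixes the last copy of $M$, the component $\pr_M \circ e_n$ equals the projection onto the final factor. Thus $(d_n, e_n)$ is exactly the canonical equivalence $P \times M^n \simeq (P \times M^{n-1}) \times M$, which closes the induction.

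The main obstacle is not conceptual but coherence bookkeeping: carrying out these manipulations in $\scC$ rather than with elements. I would invoke Lemma~\ref{st:equiv on objs pulls back functoriality} to strictify the level-wise identifications $X_n = P \times M^n$ and the equivalences of face maps into honest morphisms, so that the iterated composite $e_n = (d_0)^{\circ(n-1)}$, the product decomposition of the pullback, and the identification of $(d_n, e_n)$ with the canonical projection can all be performed compatibly. The only genuinely delicate verification is that the factorization of the Segal map through the final-edge decomposition agrees with the one coming from the spine, which is a standard combinatorial property of spine decompositions of $[n]$.
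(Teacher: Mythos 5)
Your proposal is correct and follows essentially the same route as the paper: the same induction peeling off the final edge via the decomposition $[n] = [n-1] \sqcup_{[0]} [1]$, reducing to the comparison map $X_n \to X_{n-1} \times_{X_0} X_1$, and the same structural inputs ($d_1 = \pr_P$, $d_0 \simeq a \times 1$, $d_n$ a projection). Your observation that the pullback along the projection $d_1$ is a product, so that $(d_n, e_n)$ becomes the canonical equivalence $P \times M^n \simeq (P \times M^{n-1}) \times M$, is exactly the content of the paper's augmented pasting-law diagram (right square and outer rectangle are products, hence the left square is a pullback), just packaged as a computation of components rather than a pasting of squares.
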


\begin{proof}
Consider the diagram
\begin{equation}
\begin{tikzcd}[column sep={1.5cm,between origins}, row sep={1.5cm,between origins}]
	(P \dslash M)_1 \ar[dr, "d_0"'] & & (P \dslash M)_1 \ar[dl, "d_1"]
	\\
	& (P \dslash M)_0 &
\end{tikzcd}
	\quad = \quad
\begin{tikzcd}[column sep={1.5cm,between origins}, row sep={1.5cm,between origins}]
	P \times M_1 \ar[dr, "a"'] & & P \times M_1 \ar[dl, "\pr_0"]
	\\
	& P &
\end{tikzcd}
\end{equation}
We use the following notational convention:
let $I$ be a set, and consider a product $\prod_{i \in I} C_i$ of objects in $\scC$.
For a subset $J \subset I$, we let $\pr_J \colon \prod_{i \in I} C_i \to \prod_{j \in J} C_j$ denote the canonical projection.
If $J = \{i_0, \ldots, i_n\}$ is finite, we also write $\pr_{i_0 \ldots i_n}$ instead of $\pr_{\{ i_0, \ldots, i_n \}}$.

We can augment the above diagram to a diagram
\begin{equation}
\begin{tikzcd}[column sep=1.25cm, row sep=1cm]
	P \times M_1 \times M_1 \ar[d, "\pr_{01}"'] \ar[r, "a \times 1_{M_1}"]
	& P \times M_1 \ar[d, "\pr_0"] \ar[r, "\pr_1"]
	& M_1 \ar[d]
	\\
	P \times M_1 \ar[r, "a"']
	& P \ar[r]
	& *
\end{tikzcd}
\end{equation}
Here, the right and the outer rectangle are pullback diagrams, and hence the left square is a pullback diagram as well by the pasting law.
It follows that the canonical morphism
\begin{equation}
	(P \dslash M)_2 \longrightarrow (P \dslash M)_1 \underset{ (P \dslash M)_0}{\times}  (P \dslash M)_1
\end{equation}
is an equivalence in $\scC$.

We now proceed by induction:
suppose that the canonical morphism
\begin{equation}
	(P \dslash M)_k \longrightarrow (P \dslash M)_1 \underset{ (P \dslash M)_0}{\times} \cdots \underset{ (P \dslash M)_0}{\times}  (P \dslash M)_1
\end{equation}
is an equivalence, for each $2 \leq k \leq n$.
By this assumption, it now suffices to show that the morphism
\begin{equation}
\label{eq:Segal cond for monoid actions}
	(P \dslash M)_{n+1} \longrightarrow (P \dslash M)_n \underset{ (P \dslash M)_0}{\times}  (P \dslash M)_1
\end{equation}
induced by the partition $[n+1] = [n] \sqcup_{[0]} [1]$ is an equivalence.
We again have an augmented diagram
\begin{equation}
\begin{tikzcd}[column sep=1.5cm, row sep=1cm]
	P \times M_1^{n+1} \ar[d, "\pr_{0 \ldots n}"'] \ar[r, "a^{(n)} \times 1_{M_1}"]
	& P \times M_1 \ar[d, "\pr_0"] \ar[r, "\pr_1"]
	& M_1 \ar[d]
	\\
	P \times M_1^n \ar[r, "a^{(n)}"']
	& P \ar[r]
	& *
\end{tikzcd}
\end{equation}
where the morphism $a^{(n)}$ is, up to canonical equivalence, the morphism
\begin{equation}
	a \circ (a \times 1_{M_1}) \circ \cdots \circ (a \times 1_{M_1^n}) \colon P \times M_1^n \to P\,.
\end{equation}
Again, the right-hand and outer squares in this diagram is a pullback squares.
It follows by the pasting law that the left-hand square is a pullback as well.
Since $(P \dslash M)_{n+1} = P \times M_1^{n+1}$, and the morphisms $P \times M_1^{n+1} \to P \times M_1^n$ and $P \times M_1^{n+1} \to P \times M_1$ in~\eqref{eq:Segal cond for monoid actions} are canonically equivalent to the morphisms induced from the partition $[n+1] = [n] \sqcup_{[0]} [1]$.
This completes the proof.
\end{proof}

We recall a criterion from (the proof of)~\cite[Prop.~1.1.8]{Lurie:Goodwillie} for when a category object is a groupoid object.
Given a simplicial object $X \in \Fun(\bbDelta^\opp, \scC)$ in an $\infty$-category $\scC$ and a simplicial set $K \in \sSet$, we define an object $X(K) \in \scC$ as the limit (if it exists) of the diagram
\begin{equation}
	N(\bbDelta_{/K})^\opp \longrightarrow N\bbDelta^\opp \overset{X}{\longrightarrow} \scC\,.
\end{equation}
The following can be found in the proof of~\cite[Prop.~1.1.8]{Lurie:Goodwillie}:

\begin{proposition}
\label{st:Grpd obs via horns}
Let $\scC$ be an $\infty$-category with finite limits.
A category object $X$ in $\scC$ is a groupoid object in $\scC$ if and only if the inclusion $\Lambda^2_0 \hookrightarrow \Delta^2$ induces an equivalence $X_2 \eq X(\Lambda^2_0)$.
\end{proposition}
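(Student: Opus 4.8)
The statement has a straightforward half and a substantial half; I would begin by fixing the two objects and the comparison map, then treat each implication. The assignment $K \mapsto X(K) \coloneqq \lim_{N(\bbDelta_{/K})^\opp} X$ is the canonical extension of $X$ to a functor $\sSet^\opp \to \scC$ (the right Kan extension along $\bbDelta^\opp \hookrightarrow \sSet^\opp$); it is contravariant in $K$ and sends colimits of simplicial sets to limits in $\scC$. Since $\bbDelta_{/\Delta^n}$ has a terminal object $\id_{[n]}$, the indexing category $N(\bbDelta_{/\Delta^n})^\opp$ has an initial object, whence $X(\Delta^n) \simeq X_n$; in particular $X(\Delta^2) \simeq X_2$. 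Writing the horn as a pushout $\Lambda^2_0 \cong \Delta^{\{0,1\}} \sqcup_{\Delta^{\{0\}}} \Delta^{\{0,2\}}$ along the common initial vertex and using the limit-preserving property gives $X(\Lambda^2_0) \simeq X_1 \times_{X_0} X_1$, the fibre product formed along the two source maps $d_1 \colon X_1 \to X_0$; under these identifications the map induced by $\Lambda^2_0 \hookrightarrow \Delta^2$ is $(d_2, d_1)$, which lands in the fibre product by the simplicial identity $d_1 d_2 = d_1 d_1$.

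\emph{Groupoid $\Rightarrow$ horn equivalence.} This is immediate from Definition~\ref{def:Gpd objs}: take $n = 2$ and the partition $[2] = \{0,1\} \cup \{0,2\}$, whose intersection $\{0\}$ is a single element. The defining pullback square is exactly the assertion that $X_2 \eq X(\{0,1\}) \times_{X_0} X(\{0,2\}) \simeq X_1 \times_{X_0} X_1 = X(\Lambda^2_0)$, i.e.\ that the comparison map is an equivalence.

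\emph{Horn equivalence $\Rightarrow$ groupoid.} Here $X$ is a category object and I assume $(d_2,d_1)\colon X_2 \eq X_1\times_{X_0} X_1$; I must verify \emph{every} partition square of Definition~\ref{def:Gpd objs}. Using the Segal/category equivalences $X_m \simeq X_1 \times_{X_0} \cdots \times_{X_0} X_1$, together with the analogous spine decompositions of $X(S)$ and $X(S')$, each partition square rewrites as a statement about iterated fibre products of copies of $X_1$ over $X_0$, so it suffices to treat a short list of generating configurations at low simplicial degree. The crux is to promote the given \emph{left} outer-horn equivalence to the \emph{opposite} one, namely $X_2 \eq X(\Lambda^2_2)$ (the fibre product over the two target maps $d_0$): concretely, the level-$2$ equivalence produces, for every edge, a left inverse, and one then uses the associativity encoded by $X_3$ (its spine decomposition and the simplicial identities among $d_0,\dots,d_3$) to upgrade left inverses to two-sided inverses --- this is the internal incarnation of the elementary fact that a category in which every arrow has a left inverse is already a groupoid.

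With both outer-horn equivalences at level $2$ in hand, I would conclude by induction on $n$: an arbitrary partition $[n] = S \cup S'$ with shared vertex $j$ is reduced to a consecutive (hence category-object) partition by repeatedly transposing edges across $j$, each transposition being an application of one of the two level-$2$ outer-horn equivalences and each step justified by the pasting law for pullbacks. The main obstacle is precisely this bootstrapping: extracting genuine two-sided invertibility from a single one-sided condition, and then organising the combinatorics so that every non-consecutive partition is reached by finitely many level-$2$ moves. Once the identifications of the first paragraph are in place, this implication is also exactly the content recalled from \cite{Lurie:Goodwillie}, which may alternatively be cited verbatim.
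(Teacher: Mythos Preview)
The paper does not give a proof of this proposition at all: it is stated with the attribution \emph{\cite{Lurie:Goodwillie}} (specifically, it is extracted from the proof of Prop.~1.1.8 there) and is used as a black box in the subsequent Lemma~\ref{st:gpd obs via pullback} and the proof of Theorem~\ref{st:grp actions are gpd obs}. Your final sentence already acknowledges this option, and that is exactly what the paper does.

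Your sketch of the argument is nonetheless a faithful outline of what happens in the cited reference. The forward implication is, as you say, an instance of the groupoid axiom for the partition $\{0,1\}\cup\{0,2\}$. For the converse, the strategy you describe---use the $\Lambda^2_0$-equivalence to produce one-sided inverses, then use the Segal condition at level~$3$ to upgrade these to two-sided inverses (equivalently, to obtain the $\Lambda^2_2$-equivalence), and finally reduce an arbitrary partition to the spine one by a sequence of level-$2$ transpositions---is precisely the mechanism in Lurie's proof. The one place where your outline remains genuinely sketchy is the ``bootstrapping'' step: writing down the actual $3$-simplex argument that turns a left inverse into a two-sided inverse in an internal category object requires some care with the simplicial identities, and is the only nontrivial content. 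If you were to supply a full proof rather than cite, that paragraph would need to be expanded; as it stands, invoking the reference (as the paper does) is entirely appropriate.
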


Let $\scI$ be the span category, depicted as $\{0,1\} \leftarrow \{0\} \rightarrow \{0,2\}$.
Consider the functor \smash{$D \colon \scI \to \bbDelta_{/\Lambda^2_0}$}, which sends the object $\{0\} \in \scI$ to the tip inclusion $\Delta^{\{0\}} \hookrightarrow \Lambda^2_0$ and the object $\{0,i\}$ to the edge inclusion $\Delta^{\{0,i\}} \hookrightarrow \Lambda^2_0$, for $i = 0,2$.

\begin{lemma}
\label{st:gpd obs via pullback}
Let $D \colon \scI \to \bbDelta_{/\Lambda^2_0}$ be defined as above, and let $\scC$ be an $\infty$-category with finite limits.
The following statements hold true:
\begin{myenumerate}
\item The functor \smash{$D \colon \scI \to \bbDelta_{/\Lambda^2_0}$} is cofinal.

\item For any $X \in \Fun(\bbDelta^\opp, \scC)$, the diagram
\begin{equation}
\begin{tikzcd}
	X(\Lambda^2_0) \ar[r, "\iota_{0,1}^*"] \ar[d, "\iota_{0,2}^*"']
	& X_1 \ar[d, "d_1"]
	\\
	X_1 \ar[r, "d_1"']
	& X_0
\end{tikzcd}
\end{equation}
is a pullback diagram in $\scC$, where $\iota_{0,i}^*$ denotes the morphism $X(\Delta^{\{0,i\}} \hookrightarrow \Lambda^2_0)$.

\item A category object $X \in \Fun(\bbDelta^\opp,\scC)$ in $\scC$ is a groupoid object precisely if the diagram
\begin{equation}
\begin{tikzcd}
	X_2 \ar[r, "d_2"] \ar[d, "d_1"']
	& X_1 \ar[d, "d_1"]
	\\
	X_1 \ar[r, "d_1"']
	& X_0
\end{tikzcd}
\end{equation}
is a pullback diagram in $\scC$.
\end{myenumerate}
\end{lemma}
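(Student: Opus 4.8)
The plan is to establish the three parts in order, with part~(1) carrying essentially all of the content and parts~(2)--(3) following formally from it. For part~(1) I would apply the cofinality criterion \cite[Thm.~4.1.3.1]{Lurie:HTT}: the functor $D \colon \scI \to \bbDelta_{/\Lambda^2_0}$ is cofinal if and only if, for every object $\sigma$ of $\bbDelta_{/\Lambda^2_0}$, the comma category $(\sigma \downarrow D)$ has weakly contractible nerve. Here an object $\sigma$ is a simplex $\sigma \colon [n] \to [2]$ whose image lies in $\{0,1\}$ or in $\{0,2\}$, and $D$ selects the vertex $\Delta^{\{0\}}$ together with the two edges $\Delta^{\{0,1\}}$ and $\Delta^{\{0,2\}}$. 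A morphism $\sigma \to D(i)$ is an order-preserving map to $[0]$ or $[1]$ postcomposing to $\sigma$; since each of these target simplices is a monomorphism, such a lift is \emph{unique} whenever it exists, which is what makes the comma categories easy to identify.

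I would then run a short case analysis on the image of $\sigma$. If $\sigma$ is constant at the vertex $0$ (allowing degeneracies), then lifts along all three objects of $\scI$ exist, and the two nontrivial morphisms of $\scI$ lift uniquely and compatibly, so $(\sigma \downarrow D) \cong \scI$; as $\scI$ has the initial object $\{0\}$, its nerve is contractible. If the image of $\sigma$ is contained in $\{0,1\}$ but contains $1$, then a lift exists only along $\Delta^{\{0,1\}}$, so $(\sigma \downarrow D)$ is the terminal category on one object and hence contractible; the case of image contained in $\{0,2\}$ and containing $2$ is symmetric. Since no simplex of $\Lambda^2_0$ has image meeting both $1$ and $2$, these cases are exhaustive, and $D$ is cofinal.

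Part~(2) is then formal: because $D$ is cofinal, the limit defining $X(\Lambda^2_0) = \lim_{N(\bbDelta_{/\Lambda^2_0})^\opp} X$ may be computed over $\scI^\opp$ (apply \cite[Prop.~4.1.1.8]{Lurie:HTT} in $\scC^\opp$). The restricted diagram $\scI^\opp \to \scC$ is the cospan $X_1 \xrightarrow{d_1} X_0 \xleftarrow{d_1} X_1$ --- both legs are $d_1$ since each edge $\Delta^{\{0,i\}}$ restricts to the common initial vertex $0$ through the coface $[0] \to [1]$ hitting $0$ --- and its limit is exactly the pullback square in the statement. For part~(3) I would combine this with Proposition~\ref{st:Grpd obs via horns}. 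The comparison map $X_2 = X(\Delta^2) \to X(\Lambda^2_0)$ induced by the inclusion $\Lambda^2_0 \hookrightarrow \Delta^2$ has components the restrictions to the edges $\{0,1\}$ and $\{0,2\}$, namely $d_2$ and $d_1$, which are compatible over $X_0$ by the simplicial identity $d_1 d_2 = d_1 d_1$. Hence this map is an equivalence precisely when the square in~(3) is a pullback, and by Proposition~\ref{st:Grpd obs via horns} this is in turn equivalent to $X$ being a groupoid object.

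The only genuinely delicate step is part~(1), and within it the bookkeeping: one must treat the degenerate simplices correctly (the simplices constant at $0$ are exactly those yielding the full span $\scI$ rather than a one-object category) and keep the face-map conventions straight, so that the two legs of the span both come out as $d_1$ and the comparison map comes out as $(d_2, d_1)$. Everything else is a formal consequence of cofinality together with the already-established Proposition~\ref{st:Grpd obs via horns}.
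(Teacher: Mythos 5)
Your proposal is correct and takes essentially the same route as the paper's proof: part~(1) via the slice-category cofinality criterion with the identical case analysis on simplices of $\Lambda^2_0$ (a simplex constant at the apex yields a comma category isomorphic to $\scI$, any other simplex yields the terminal category), and parts~(2)--(3) deduced formally by restricting the limit along $D$ (after taking opposites) and invoking Proposition~\ref{st:Grpd obs via horns}. The only cosmetic difference is that you verify contractibility of $N\scI$ via its initial object $\{0\}$, whereas the paper computes the geometric realisation $|\Delta^1| \sqcup_{|\Delta^0|} |\Delta^1| \simeq *$.
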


\begin{proof}
For claim~(1), note that an object of \smash{$\bbDelta_{/\Lambda^2_0}$} is a pair $([n], \varphi)$ of an object $[n] \in \bbDelta$ and a morphism of simplicial sets $\varphi \colon \Delta^n \to \Lambda^2_0$.
We show that, for each object $([n], \varphi)$ of \smash{$\bbDelta_{/\Lambda^2_0}$}, the slice category $([n], \varphi)_{/D}$ is contractible.

Since the horn $\Lambda^2_0$ fits into a pushout diagram
\begin{equation}
\begin{tikzcd}
	\Delta^{\{0\}} \ar[r, hookrightarrow, "d^1"] \ar[d, hookrightarrow, "d^1"']
	& \Delta^{\{0,1\}} \ar[d, hookrightarrow]
	\\
	\Delta^{\{0,2\}} \ar[r, hookrightarrow]
	& \Lambda^2_0
\end{tikzcd}
\end{equation}
in $\sSet$, the morphism $\varphi \colon \Delta^n \to \Lambda^2_0$ is either the constant map at the apex of the horn, i.e.~$\varphi$ factors as $\varphi \colon \Delta^n \to \Delta^{\{0\}} \hookrightarrow \Lambda^2_0$, or it factors through a unique map $\Delta^n \to \Delta^{\{0,i\}}$, for $i = 0$ or $i = 2$, but not through the apex $\Delta^{\{0\}} \hookrightarrow \Lambda^2_0$.
(One can see this either by writing $\Delta^n = N[n]$ and $\Lambda^2_0 = N \scI$ and using that the nerve functor is fully faithful, or by using that $\sSet(\Delta^n, -) = (-)_n$ preserves colimits.)

In the first case, the slice category $([n], \varphi)_{/D}$ is the category describing spans; in other words, it is isomorphic to $\scI$, and we have $|N\scI| \cong |\Delta^1 \sqcup_{\Delta^0} \Delta^1| \cong |\Delta^1| \sqcup_{|\Delta^0|} |\Delta^1| \simeq *$.
In the other cases, the slice category $([n], \varphi)_{/D}$ is the final category, and hence contractible as well.

Claim~(2) now follows directly from the definition of $X(K)$, for $K \in \sSet$, together with part~(1) (after taking opposites), and claim~(3) then follows by combining claim~(2) with Proposition~\ref{st:Grpd obs via horns}.
\end{proof}

\begin{lemma}
\label{st:(co)lims of const diags}
Let $K$ be a simplicial set, let $\scC$ be an $\infty$-category, and let $C \in \scC$ be an object.
Let $\sfc \colon \scC \to \Fun(K,\scC)$ denote the constant-diagram functor.
\begin{myenumerate}
\item If $K$ is contractible, i.e.~$K \simeq *$ in $\sSet$ with the Kan-Quillen model structure, and $\colim^\scC_K(\sfc C)$ exists in $\scC$, then the canonical morphism $\colim^\scC_K(\sfc C) \to C$ in $\scC$ is an equivalence.

\item Dually, if $K$ is contractible and $\lim^\scC_K(\sfc C)$ exists in $\scC$, then the canonical morphism $C \to \lim^\scC_K(\sfc C) $ in $\scC$ is an equivalence.
\end{myenumerate}
\end{lemma}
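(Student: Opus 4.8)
The plan is to reduce both claims to the $\infty$-categorical form of Quillen's Theorem~A, namely Joyal's cofinality criterion \cite[Thm.~4.1.3.1]{Lurie:HTT}. Recall that a map $p \colon S \to T$ of simplicial sets is \emph{cofinal} precisely when, for every vertex $t$ of $T$, the slice $S \times_T T_{t/}$ is weakly contractible, and that along a cofinal map colimits are preserved both in existence and in value. Dually, $p$ is \emph{coinitial} when the slices $S \times_T T_{/t}$ are weakly contractible, and coinitial maps preserve limits. The entire argument is then a matter of identifying the unique map $K \to \Delta^0$ as (co)initial under the hypothesis $K \simeq *$.

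For claim~(1) I would consider the collapse map $\pi \colon K \to \Delta^0$. Its single relevant slice is $K \times_{\Delta^0} (\Delta^0)_{*/} \cong K \times_{\Delta^0} \Delta^0 \cong K$, which is weakly contractible exactly because $K \simeq *$; hence $\pi$ is cofinal. The constant diagram factors as $\sfc C \colon K \xrightarrow{\pi} \Delta^0 \xrightarrow{C} \scC$, where the second map is the diagram selecting $C$, whose colimit is $C$ itself equipped with the identity cocone. By cofinality of $\pi$, the colimit $\colim^\scC_K(\sfc C)$ exists if and only if $C$ is a colimit of the $\Delta^0$-diagram (which it tautologically is), and the canonical comparison morphism $\colim^\scC_K(\sfc C) \to C$ determined by the constant cocone is an equivalence. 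This is exactly the assertion.

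Claim~(2) then follows by duality. The cleanest route is to pass to the opposite category: limits over $K$ in $\scC$ are colimits over $K^\opp$ in $\scC^\opp$, and weak contractibility is invariant under $(-)^\opp$ since $|K^\opp| \cong |K|$, so applying part~(1) in $\scC^\opp$ yields the result. Alternatively I would argue directly that $\pi \colon K \to \Delta^0$ is coinitial—its slices $K \times_{\Delta^0} (\Delta^0)_{/*} \cong K$ are again weakly contractible—and that coinitial maps preserve limits, making the canonical morphism $C \to \lim^\scC_K(\sfc C)$ an equivalence.

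The proof is essentially formal, so there is no genuine computation to grind through; the $\Delta^0$-indexed (co)limit is tautologically $C$. The only point needing care—and hence the main, though minor, obstacle—is the bookkeeping around cofinality: one must verify that the comparison morphism produced by Joyal's theorem coincides with the canonical morphism of the statement (the one induced by the constant cocone, respectively cone), and that the existence-transfer clause is invoked in the correct direction. Once the slice $K \times_{\Delta^0} (\Delta^0)_{*/}$ is identified with $K$, the hypothesis $K \simeq *$ becomes precisely the cofinality condition, and nothing further remains.
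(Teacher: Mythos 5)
Your proposal is correct and follows essentially the same route as the paper: both factor the constant diagram through the collapse map $K \to \Delta^0$, invoke Joyal's cofinality criterion \cite[Thm.~4.1.3.1]{Lurie:HTT} to identify cofinality of the collapse with weak contractibility of $K$, conclude via preservation of colimits along cofinal maps, and obtain part~(2) by duality. The slice identification and the remark on matching the comparison morphism with the canonical one are fine, so nothing is missing.
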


\begin{proof}
By the definition of $\sfc$, there is a commutative diagram
\begin{equation}
\begin{tikzcd}
	K \ar[d, "coll"'] \ar[r, "\sfc C"] & \scC
	\\
	* \ar[ur, "C"'] &
\end{tikzcd}
\end{equation}
in $\bS$.
By~\cite[Cor.~4.1.2.6, Thm.~4.1.3.1]{Lurie:HTT}, the morphism $coll$ is cofinal if and only if the simplicial set $K \times * \cong K$ is contractible, i.e.~precisely if $coll \colon K \to *$ is an equivalence in $\sSet$ (in the Kan-Quillen model structure).
The first claim then follows from the fact that cofinal morphisms preserve colimits~\cite[Prop.~4.1.1.8]{Lurie:HTT}.
The second statement follows by duality.
\end{proof}

\begin{example}
\label{eg:contractible Cats for const diags}
We need the following two specific cases in which Lemma~\ref{st:(co)lims of const diags} applies:
\begin{myenumerate}
\item The nerve $N \scI \in \sSet$ is contractible, as already seen in the proof of Lemma~\ref{st:gpd obs via pullback}.

\item The inclusion $\{[0]\} \hookrightarrow \bbDelta$ is the inclusion of a final object.
Thus, the nerve $N \bbDelta \in \sSet$ is contractible in $\sSet$.
\qen
\end{myenumerate}
\end{example}

\begin{lemma}
\label{st:c is ff and Px(-) pres contr lims}
Let $K \in \sSet$ be contractible (in the Kan-Quillen model structure) and let $\scC$ be an $\infty$-category admitting limits of shape $K$.
Let $P \in \scC$ be any object.
\begin{myenumerate}
\item The constant diagram functor $\sfc \colon \scC \to \Fun(K,\scC)$ is fully faithful.

\item If $\scC$ admits finite products, then the functor $P \times (-) \colon \scC \to \scC$ preserves limits of shape $K$.
\end{myenumerate}
\end{lemma}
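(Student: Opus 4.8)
The plan is to deduce both claims from the adjunction between the constant-diagram functor and the limit functor. Since $\scC$ admits limits of shape $K$, the functor $\sfc \colon \scC \to \Fun(K,\scC)$ is left adjoint to $\lim^\scC_K \colon \Fun(K,\scC) \to \scC$, and I would record this adjunction $\sfc \dashv \lim^\scC_K$ first~\cite{Lurie:HTT}, as it underlies everything that follows.

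For claim~(1), I would invoke the standard criterion that a left adjoint is fully faithful precisely when the unit of its adjunction is an equivalence. The unit at an object $C \in \scC$ is exactly the canonical comparison morphism $C \to \lim^\scC_K(\sfc C)$, and by Lemma~\ref{st:(co)lims of const diags}(2) this morphism is an equivalence whenever $K$ is contractible. This yields claim~(1) immediately.

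For claim~(2), the strategy is to rewrite a product $P \times D$ of an object $P$ with a diagram $D \colon K \to \scC$ as the product $\sfc P \times D$ computed objectwise in $\Fun(K,\scC)$, using that finite products in functor $\infty$-categories are formed pointwise. Then I would apply $\lim^\scC_K$: being a right adjoint, it preserves finite products, so there is a canonical equivalence $\lim^\scC_K(\sfc P \times D) \simeq (\lim^\scC_K \sfc P) \times (\lim^\scC_K D)$. Feeding in $\lim^\scC_K \sfc P \simeq P$ (claim~(1), or directly Lemma~\ref{st:(co)lims of const diags}(2)) then produces the desired equivalence $\lim^\scC_K(P \times D) \simeq P \times \lim^\scC_K D$.

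The only delicate point---and the step I expect to require the most care---is verifying that the equivalence produced by this chain of identifications agrees with the \emph{canonical} comparison morphism $P \times \lim^\scC_K D \to \lim^\scC_K(P \times D)$, rather than merely with some abstract equivalence. I would address this by checking that each identification in the chain is natural and compatible with the relevant projection cones, so that the composite respects the universal property defining the comparison map. Everything else is a formal consequence of the adjunction $\sfc \dashv \lim^\scC_K$ and the contractibility of $K$.
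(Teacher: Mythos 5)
Your proposal is correct and follows essentially the same route as the paper: both arguments rest on the adjunction $\sfc \dashv \lim^\scC_K$ together with Lemma~\ref{st:(co)lims of const diags}(2), which is precisely the statement that the unit $C \to \lim^\scC_K(\sfc C)$ is an equivalence, giving part~(1) by the standard criterion for fully faithful left adjoints. The only cosmetic difference is in part~(2): the paper transports the product through mapping spaces, computing $\ul{\scC}\big(C, \lim^\scC_K(\sfc P \times F)\big) \simeq \ul{\scC}(C, P \times \lim^\scC_K F)$ and concluding with the Yoneda lemma, whereas you apply directly that the right adjoint $\lim^\scC_K$ preserves the pointwise finite products in $\Fun(K,\scC)$ --- formally the same argument, and the canonicity check you flag (that the composite equivalence is the canonical comparison map) is exactly the naturality that the paper's Yoneda-style chain also implicitly requires.
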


\begin{proof}
Let $C,D \in \scC$ be any objects.
To see~(1), we use the adjunction $\sfc \dashv \lim^\scC_K$ and Lemma~\ref{st:(co)lims of const diags}, which yield canonical equivalences
\begin{equation}
	\ul{\scC^K}(\sfc C, \sfc D)
	\simeq \ul{\scC}(C, \lim^\scC_K\, \sfc D)
	\simeq \ul{\scC}(C,D)\,.
\end{equation}

For claim~(2), let $C, P \in \scC$ be objects, and let $F \colon K \to \scC$ be a diagram.
We now have canonical equivalences
\begin{align}
	\ul{\scC} \big( C, \lim^\scC_K( \sfc P \times F) \big)
	&\simeq \ul{\scC^K} (\sfc C, \sfc P \times F)
	\\*
	&\simeq \ul{\scC^K} (\sfc C, \sfc P) \times \ul{\scC^K} (\sfc C, F)
	\\
	&\simeq \ul{\scC} (C, P) \times \ul{\scC} (C, \lim^\scC_K F)
	\\*
	&\simeq \ul{\scC} (C, P \times \lim^\scC_K F)\,.
\end{align}
In the third equivalence we have used part~(1), i.e.~that $\sfc$ is fully faithful here.
Then, the statement follows from the Yoneda Lemma.
\end{proof}

We can now prove Theorem~\ref{st:grp actions are gpd obs}:

\begin{proof}[Proof of Theorem~\ref{st:grp actions are gpd obs}]
Since every group object in $\scC$ is in particular a monoid object in $\scC$, it follows from Proposition~\ref{st:monoid actions are Cat obs} that $P \dslash G$ is a category object in $\scC$.
We now use Lemma~\ref{st:gpd obs via pullback} to show that it is even a groupoid object.
By that lemma, it suffices to check that the diagram
\begin{equation}
\label{eq:Lambda^2_0 in P//G}
\begin{tikzcd}[column sep=1.25cm, row sep=1cm]
	(P \dslash G)_2 \ar[r, "d_2"] \ar[d, "d_1"']
	& (P \dslash G)_1 \ar[d, "d_1"]
	\\
	(P \dslash G)_1 \ar[r, "d_1"']
	& (P \dslash G)_0
\end{tikzcd}
	 \quad = \quad
\begin{tikzcd}[column sep=1.5cm, row sep=1cm]
	P \times G_1^2 \ar[r, "d_2 = \pr_{01}"] \ar[d, "d_1"']
	& P \times G_1 \ar[d, "d_1 = \pr_0"]
	\\
	P \times G_1 \ar[r, "d_1 = \pr_0"']
	& P
\end{tikzcd}
\end{equation}
is a pullback diagram in $\scC$, where we have used axioms~(1) and~(2) of Definition~\ref{def:monoid action} and their consequences pointed out after Definition~\ref{def:oo-group action}.

Our goal now is to split off the factor $P$ in diagram~\eqref{eq:Lambda^2_0 in P//G}.
To that end, consider the diagram
\begin{equation}
\label{eq:diag d_1 = 1_P x d_1^G}
\begin{tikzcd}[row sep={1.25cm,between origins}, column sep={1.75cm,between origins}]
	& P \ar[rr, equal]
	& & P
	\\
	P \times G_1^2 \ar[ur, "\pr_0 \simeq d_1 d_2"] \ar[dr, "\pr_{12}"'] \ar[rr, "d_1" description]
	& & P \times G_1 \ar[ur, "\pr_0 = d_1"'] \ar[dr, "\pr_1"]
	&
	\\
	& G_1^2 \ar[rr, "d_1^G"']
	& & G_1
\end{tikzcd}
\end{equation}
The bottom rectangle in diagram~\eqref{eq:diag d_1 = 1_P x d_1^G} commutes by axiom~(3) of Definition~\ref{def:oo-group action}.
The top rectangle commutes because $P \dslash G$ is a simplicial object in $\scC$, so we have a canonical equivalence $d_1 d_2 \simeq d_1 d_1$.
This establishes the morphism $d_1$ as a product of morphisms in $\scC$:
it is induced by the morphisms $1_P \colon P \to P$ and $d_1^G \colon G_1^2 \to G_1$.
That is, there is a canonical equivalence
\begin{equation}
\label{eq:d_1 = 1_P x d_1^G}
	d_1 \simeq 1_P \times d_1^G\,,
\end{equation}
of morphisms $(P \dslash G)_2 \to (P \dslash G)_1$ in $\bH$.
We thus have an equivalence of diagrams
\begin{equation}
\begin{tikzcd}[column sep=1.5cm, row sep=1cm]
	P \times G_1^2 \ar[r, "d_2 = \pr_{01}"] \ar[d, "d_1 \simeq 1_P \times d_1^G"']
	& P \times G_1 \ar[d, "d_1 = \pr_0"]
	\\
	P \times G_1 \ar[r, "d_1 = \pr_0"']
	& P
\end{tikzcd}
	\quad \simeq \quad
	P \times \left(
\begin{tikzcd}[column sep=1.25cm, row sep=1cm]
	G_1^2 \ar[r, "d_2^G"] \ar[d, "d_1^G"']
	& G_1 \ar[d]
	\\
	G_1 \ar[r]
	& *
\end{tikzcd}
	\right)
\end{equation}
By the definition of $G$ as a groupoid object, the diagram 
\begin{equation}
\begin{tikzcd}[column sep=1.25cm, row sep=0.75cm]
	G_1^2 \ar[r, "d_2^G"] \ar[d, "d_1^G"']
	& G_1 \ar[d]
	\\
	G_1 \ar[r]
	& *
\end{tikzcd}
\end{equation}
is a pullback diagram in $\scC$.
It now follows from Lemma~\ref{st:c is ff and Px(-) pres contr lims} that the functor $P \times (-)$ sends this pullback diagram to a pullback diagram.
Consequently, the square~\eqref{eq:Lambda^2_0 in P//G} is a pullback diagram in $\bH$.
\end{proof}

\end{appendix}

\begin{small}
\bibliographystyle{alphaurl}
\addcontentsline{toc}{section}{References}
\bibliography{SmString_Bib}

\vspace{0.5cm}

\noindent
\noindent
Mathematical Institute, University of Oxford.
\\
severin.bunk@maths.ox.ac.uk

\end{small}

\end{document}